\theoremstyle{plain}
\newtheorem{thm}{Theorem}[section]
\newtheorem{hopefully-a-thm}{Wish}[section]
\newtheorem{cor}[thm]{Corollary}
\newtheorem{prop}[thm]{Proposition}
\newtheorem{lemma}[thm]{Lemma}
\theoremstyle{definition}
\newtheorem{definition}{Definition}[section]
\theoremstyle{remark}
\newtheorem*{remark}{Remark}
\newtheorem*{example}{Example}
\newcommand{\N}{\mathbf{N}}
\newcommand{\R}{\mathbf{R}}
\newcommand{\C}{\mathbf{C}}
\newcommand{\Q}{\mathbf{Q}}
\newcommand{\D}{\mathbf{D}}
\newcommand{\T}{\mathbf{T}}
\newcommand{\tr}{\mathrm{tr}\;}
\newcommand{\Hpl}{\mathbf{H}}
\renewcommand{\H}{\mathcal{H}}
\newcommand{\mint}{\overset{\curvearrowright}{\int}}
\newcommand{\prodr}{\overset{\curvearrowright}{\prod}}
\newcommand{\prodl}{\overset{\curvearrowleft}{\prod}}
\newcommand{\diag}{\mathrm{diag}}
\renewcommand{\Re}{\mathrm{Re}\,}
\renewcommand{\Im}{\mathrm{Im}\,}
\newcommand{\defin}[1]{{\em #1}}
\newcommand{\adj}{{\mbox{*}}}
\newcommand{\tinypf}[1]{}
\newcommand{\sigmamax}{\sigma_{\text{max}}}
\newcommand{\Sch}{\mathcal{S}}
\newcommand{\Img}{\mathrm{Im}\,}
\newcommand{\BV}{\mathrm{BV}}
\newcommand{\var}{\mathrm{var}}
\newcommand{\osc}{\mathrm{osc}}
\newcommand{\ch}{\mathbf{1}}
\newif\ifdetail
\newcommand{\todo}[1]{\ifdetail TODO{\ifx&#1&{: #1}\fi}\fi}
\numberwithin{equation}{section}
\newcommand*{\betreuer}[1]{\def\@betreuer{#1}}
\newcommand*{\ausarbeitungstyp}[1]{\def\@ausarbeitungstyp{#1}}
\newcommand*{\institut}[1]{\def\@institut{#1}}
\newcommand*{\authornew}[1]{\def\@authornew{#1}}
\renewcommand\maketitle{\begin{titlepage}
  \let\footnotesize\small
  \let\footnoterule\relax
  \let \footnote \thanks
  \null\vfil

\begin{center}%

\parbox{12cm}{\centering\LARGE\bfseries \@title \par}\\
{\large 
\vspace{.917em}
\@authornew}\\
\vspace{.917em}
\vspace{.917em}
{\normalsize 
 \@date}
\vspace{13.75em}

{\normalsize \@ausarbeitungstyp}\\
\vspace{.917em}
{\normalsize \@betreuer}\\
\vspace{.917em}
\centerline{{\normalsize\sc \@institut}}
\vspace{13.75em}

\centerline{{\normalsize\sc Mathematisch-Naturwissenschaftliche Fakult\"at der}}
\vspace{.917em}
\centerline{{\normalsize\sc Rheinischen Friedrich-Wilhelms-Universit\"at Bonn}}

\end{center}

\clearpage
\thispagestyle{empty}\mbox{}
\clearpage
\pagenumbering{arabic}

\end{titlepage}

  \setcounter{footnote}{0}%
  \global\let\maketitle\relax
  \global\let\@author\@empty
  \global\let\@date\@empty
  \global\let\@title\@empty
  \global\let\title\relax
  \global\let\author\relax
  \global\let\date\relax
  \global\let\and\relax
}
\date{4th February 2014}
\title{Inner-outer factorization of analytic matrix-valued functions}
\begin{document}
\maketitle

\pagestyle{fancy}
\setcounter{page}{3}

\tableofcontents
\newpage

\section{Introduction}

It is well known \cite{Garnett}, \cite{Rudin2} that an analytic function $f\not\equiv 0$ on the unit disk $\D$ in the complex plane, satisfying $|f(z)|\le 1$ for all $z\in\D$, can be uniquely factored as
\begin{align}
\label{eqn:scinnerouter}
f=m_f\cdot q_f
\end{align}
where $m_f$ is an \emph{inner function}\index{scalar inner function}, i.e. a bounded analytic function satisfying $|m_f(z)|=1$ almost everywhere on $\T=\partial\D$ and $q_f$ is an \emph{outer function}\index{scalar outer function}, i.e. $-\log|q_f|$ is the Poisson extension of an absolutely continuous positive measure. Inner functions can be further factored into a Blaschke product\index{Blaschke product}, that is, a convergent product of functions of the form\footnote{With obvious modifications if $z_0=0$.} $b_n(z)=\frac{|z_0|}{z_0}\frac{z_0-z}{1-z\overline{z_0}}$, $z_0\in\D$, and a non-vanishing inner function.

This work describes that factorization for the case that $f$ is a bounded analytic matrix-valued function on the unit disk (we will abbreviate the term \emph{matrix-valued function} by \emph{mvf}\index{mvf} from now on).
It should be noted that a factorization of the type \eqref{eqn:scinnerouter} can be achieved also for a more general class of functions taking as values operators on a Hilbert space. This theory was developed by Sz.-Nagy et al. \cite{Sz.-Nagy} and uses abstract machinery of operator theory.

However, our goal is to provide a more explicit understanding for the case of matrices. An appropriate generalization of the Poisson integral representations for scalar inner and outer functions turns out to be given by multiplicative integrals. These are defined in a similar manner to classical Riemann-Stieltjes integrals, where the usual Riemann sums are replaced by products and we are integrating over matrices instead of complex numbers. A motivation for considering multiplicative integrals originates in the study of the nonlinear Fourier transform or scattering transform \cite{TaoThieleTsai}, a certain discrete nonlinear analogue of the classical Fourier transform. Multiplicative integrals also arise naturally in the theory of canonical systems of ordinary differential equations, as they can be interpreted as monodromy matrices of such systems \cite{Arov-DymI}, \cite{Arov-DymII}. In physics, particularly in quantum field theory, multiplicative integrals play a central role and are known as time-ordered or path-ordered exponentials.

Multiplicative representations for certain classes of analytic mvfs have been developed by V.P. Potapov \cite{Potapov}. The inner-outer factorization of bounded analytic mvfs was found by Ginzburg \cite{Ginzburg}, but he omits the details of his proofs. We are trying to fill in the gaps.

Let us proceed to describe the main result. This requires a few definitions, which we will briefly state now. They will be repeated and covered in greater detail in later sections.

A bounded analytic mvf is a mvf  whose entries are bounded analytic functions. A Blaschke-Potapov product is a possibly infinite product of mvfs on the unit disk of the form
\begin{align}
b(z)=I-P+\frac{|z_0|}{z_0}\frac{z_0-z}{1-z\overline{z_0}}P
\end{align}
for some $z_0\in\D$ and an orthogonal projection $P$. By convention, we also allow a Blaschke-Potapov product to be multiplied by a unitary constant. We call a mvf $A$ on an interval $[a,b]$ increasing if it is Hermitian and $A(t)-A(s)$ is positive semidefinite for all $t,s\in[a,b]$ with $t\ge s$. By an outer mvf we mean a bounded analytic mvf on the unit disk of the form
\begin{align}
\label{eqn:introouter}
E(z)=U\mint_0^{2\pi} \exp\left(\frac{z+e^{i\varphi}}{z-e^{i\varphi}} M(\varphi)d\varphi\right)
\end{align}
where $U$ is a unitary constant, $M$ an integrable and Hermitian mvf, whose least eigenvalue is bounded from below. The symbol $\mint$ denotes a multiplicative integral, which we will discuss in detail in Section \ref{sect:multint}.
A pp-inner inner function is a mvf on the unit disk taking the form
\begin{align}
\label{eqn:introppinner}
S_{pp}(z)=U\prod_{k=1}^m \mint_0^{l_k} \exp\left(\frac{z+e^{i\theta_k}}{z-e^{i\theta_k}}\,dE_k(t)\right)
\end{align}
for a unitary constant $U$, $m\in\N\cup\{\infty\}$, $l_k>0$, $\theta_k\in[0,2\pi)$ and increasing mvfs $E_k$ with $\tr E_k(t)=t$. By an sc-inner function we mean a mvf on the unit disk which can be written as
\begin{align}
\label{eqn:introscinner}
S_{sc}(z)=U\mint_0^{2\pi} \exp\left(\frac{z+e^{i\varphi}}{z-e^{i\varphi}}dS(\varphi)\right)
\end{align}
for a unitary constant $U$ and a singular continuous increasing mvf $S$. More details and equivalent characterizations for these definitions will be given in Section \ref{sect:innerouterfact}. We can now state the main theorem.

\begin{thm}
\label{thm:main}
Let $A$ be a bounded analytic function on $D$ such that $\det A~\not\equiv~0$. Then there is a Blaschke-Potapov product $B$,  a pp-inner mvf $S_{pp}$, an sc-inner mvf $S_{sc}$ and an outer mvf $E$ such that
\begin{align}
A(z)=B(z)S_{pp}(z)S_{sc}(z)E(z)
\end{align}
for all $z\in\D$. Moreover, this factorization is unique in the sense that the factors are uniquely determined up to multiplication with a unitary constant.

Also, the function $M$ in the representation \eqref{eqn:introouter} is uniquely determined up to changes on a set of measure zero.
\end{thm}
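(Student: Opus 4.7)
The plan is to reduce the theorem to a sequence of more primitive factorization results developed in later sections: (i) a general inner-outer factorization $A = S \cdot E$, (ii) a Blaschke-Potapov factorization of the inner factor $S$, and (iii) a Lebesgue-type decomposition of the remaining non-vanishing, non-Blaschke piece via the multiplicative integral formalism. Uniqueness will then follow from the uniqueness of each ingredient in this chain.

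For existence, I would first construct the outer factor $E$. Since $A$ is a bounded analytic mvf, its entries admit nontangential boundary values $A(e^{i\varphi})$ almost everywhere on $\T$, and $\det A \not\equiv 0$ gives $\log|\det A| \in L^1(\T)$. Using the functional calculus for Hermitian matrices, set $M(\varphi)$ to be (a normalization constant times) $\log(A(e^{i\varphi})^*A(e^{i\varphi}))$. Then $M$ is integrable and its eigenvalues are bounded on one side, so $E$ defined by (\ref{eqn:introouter}) is an outer mvf. The construction ensures $E^* E = A^* A$ a.e. on $\T$, hence the quotient $S := A E^{-1}$ extends to a bounded analytic mvf with unitary boundary values a.e., which is exactly what it means for $S$ to be inner.

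Next I would factor $S = B \cdot S_{pp} \cdot S_{sc}$. The zeros of $S$ in $\D$, together with the orthogonal projections onto the kernels of $S(z_0)$ at each zero $z_0$, yield a Blaschke-Potapov product $B$; the required summability $\sum(1-|z_n|) < \infty$ follows from $\log|\det S| \in L^1(\T)$, as in the scalar case. After dividing out $B$, the quotient $B^{-1}S$ is a non-vanishing inner mvf and admits a multiplicative integral representation against an increasing mvf $\Omega$ on $[0,2\pi]$ whose derivative vanishes a.e. Lebesgue-decompose $\Omega = \Omega_{pp} + \Omega_{sc}$; since these are carried by disjoint Borel sets, the multiplicative integral over $\Omega$ can be re-expressed as an ordered product of the contributions from each support. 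The pure-point part collapses to the countable ordered product (\ref{eqn:introppinner}) giving $S_{pp}$, and the continuous part gives $S_{sc}$ in the form (\ref{eqn:introscinner}).

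For uniqueness, the boundary identity $E^* E = A^* E^* E^{-1} E^{-1} A = A^* A$ a.e. determines $M$ a.e. via the functional calculus, and hence $E$ up to a unitary left factor. The inner part $S = A E^{-1}$ is then determined up to the same unitary. Within $S$, the Blaschke-Potapov product $B$ is determined by the zero structure, and the Lebesgue decomposition of $\Omega$ into pure-point and singular-continuous parts is unique by classical measure theory, giving uniqueness of $S_{pp}$ and $S_{sc}$ up to unitaries. The main technical obstacle I expect is making rigorous the factorization of the multiplicative integral across the Lebesgue decomposition: noncommutativity of matrix multiplication means one cannot simply add in the exponent as in the scalar case, so the split must be obtained by approximating Riemann-type products supported separately on the two carriers and passing to the limit, using the mutual singularity to make cross-terms vanish. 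Secondary obstacles are the matrix-valued Blaschke summability estimate for the zeros of $S$ and the bijective correspondence between non-vanishing singular inner mvfs and increasing singular mvfs $\Omega$ that underlies the uniqueness of the associated measure.
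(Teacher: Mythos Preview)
Your overall architecture mirrors the scalar story, but two of the key steps do not survive the passage to matrices, and the paper takes a rather different route precisely to avoid them.

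First, the construction of the outer factor. You set $M(\varphi)$ to be a constant times $\log\bigl(A(e^{i\varphi})^*A(e^{i\varphi})\bigr)$ and assert that the resulting $E$ from \eqref{eqn:introouter} satisfies $E^*E=A^*A$ a.e.\ on $\T$. In the scalar case this holds because the multiplicative integral is simply $\exp$ of an additive Poisson integral, and the Poisson kernel recovers boundary values. In the matrix case the multiplicative integral is a genuine ordered product of noncommuting exponentials, and there is no reason the boundary values of $E^*E$ should equal $\exp(M)$ pointwise; the values $M(\varphi)$ for different $\varphi$ interfere. The paper never constructs $E$ from boundary data. Instead it obtains $E$ indirectly: starting from Potapov's multiplicative representation (Theorem~\ref{thm:potapov}), it proves that a mvf is outer iff its determinant is scalar outer (Lemma~\ref{lemma:detinnerouter}), and produces the outer factor by a limiting procedure that only tracks determinants (Theorem~\ref{thm:ginzburg-fact}). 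The uniqueness of $M$ is then obtained not from a boundary identity but by differentiating the multiplicative integral in the upper limit (Section~4.2).

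Second, the split of the singular inner part. You propose to Lebesgue-decompose the integrator $\Omega=\Omega_{pp}+\Omega_{sc}$ and split the multiplicative integral accordingly, arguing that mutual singularity makes cross-terms vanish. This is exactly where noncommutativity bites: if $\Omega_{pp}$ has a jump at $\theta_0\in(0,2\pi)$, the multiplicative integral factors as
\[
\mint_0^{\theta_0}\!e^{h_z\,d\Omega_{sc}}\cdot e^{h_z(\theta_0)\Delta\Omega_{pp}}\cdot\mint_{\theta_0}^{2\pi}\!e^{h_z\,d\Omega_{sc}},
\]
and there is no way to move the middle factor to the front without changing it. Mutual singularity of the measures says nothing about commutation of the factors. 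The paper handles this with Ginzburg's commutation lemma (Lemma~\ref{lemma:ginzburg}): given $A_1A_2$ with $A_i\in\Sch$, one can write $A_1A_2=\tilde A_2\tilde A_1$ with $\det\tilde A_i=\det A_i$. Combined with the fact that the angular function $\theta$ in Potapov's representation is determined by $\det A$ alone (Lemma~\ref{lemma:theta}), this lets one shuffle pp-factors to the front one at a time while keeping control of the multiplicative structure (Lemma~\ref{lemma:ppinnerfact}), and similarly for the sc/ac split by approximating with step functions. The price is that the integrators $E_k$ in \eqref{eqn:introppinner} are \emph{not} the original jumps of $\Omega$, and indeed the paper shows they are not uniquely determined---so your uniqueness argument ``the Lebesgue decomposition of $\Omega$ is unique, hence $S_{pp},S_{sc}$ are'' conflates uniqueness of the factor as a function with uniqueness of its integral representation.

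In short: the paper's proof is organized around determinants (to import the scalar theory) and Ginzburg's lemma (to reorder noncommuting factors), rather than around boundary values and additive measure decompositions.
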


It is a natural question, whether also the functions $S, E_k$ in \eqref{eqn:introppinner}, \eqref{eqn:introscinner} are uniquely determined. We will see that the answer to that question is, at least in the case of the functions $E_k$, negative.

This thesis is structured as follows. In Section \ref{sect:multint} we develop the theory of multiplicative Riemann-Stieltjes integrals to an extent sufficient for our purpose. Section \ref{sect:contractivemvfs} presents Potapov's fundamental theorem on multplicative representations of contractive mvfs (Theorem \ref{thm:potapov}). We also discuss convergence and uniqueness questions on Blaschke-Potapov products. The next section is devoted to the proofs of Theorem \ref{thm:main} and some additional properties of inner and outer mvfs. In the appendix we have assembled several basic facts which are needed in the text. This includes in particular a proof of Herglotz' representation theorem for mvfs, which is used in a crucial step in the proof of Potapov's fundamental theorem.\\

{\bf Acknowledgement.} I would like to express my gratitude to my advisor Prof. Christoph Thiele, who has supported me throughout the work on this thesis and provided me with helpful comments.

\section{Multiplicative integrals}
\label{sect:multint}

The multiplicative integrals which we are concerned with are certain multiplicative analogues of classical Riemann-Stieltjes integrals.

Multiplicative integrals first originated in the work of V. Volterra who considered them in 1887 for the purpose of studying systems of ordinary differential equations. L. Schlesinger later formulated Volterra's concepts in a rigorous framework, cf. \cite{Schlesinger1}. An overview of the subject is given in \cite{Slavik}. However, the focus there is on multiplicative Riemann and Lebesgue integrals. Multiplicative integrals of Stieltjes type are discussed in \cite[Appendix \S 1]{Potapov} and \cite[\S 25]{Brodskii}.

\subsection{Definition}
\label{multint}

Let us first fix some notation and conventions which will be used not only in this section, but throughout the entire text. 

The space of $n\times n$ matrices with entries in $\C$ will be denoted by $M_n$. We equip $M_n$ with the matrix norm\index{matrix norm} $\|A\|=\sup_{\|x\|_2=1} \|Ax\|_2$ where $\|\cdot\|_2$ denotes the Euclidean norm in $\C^n$. Several properties and estimates for this norm are given in Appendix \ref{subsect:matrixnorm}. They will be used without further reference throughout the text.

We call a matrix $A\in M_n$ \defin{positive}\index{positive matrix} and write $A\ge 0$, if it is Hermitian and positive semidefinite. For a positive definite Hermitian matrix $A$ we write $A>0$ and call it \defin{strictly positive}\index{strictly positive matrix}.

A mvf $A:[a,b]\rightarrow M_n$ is called \defin{increasing}\index{increasing} if it is Hermitian and monotonously increasing, i.e. $A(t)\ge A(s)$ whenever $t\ge s$. Likewise, $A$ is called \defin{strictly increasing}\index{strictly increasing} if $A(t)>A(s)$ whenever $t>s$. The terms \defin{decreasing}\index{decreasing} and \defin{strictly decreasing}{\index{strictly decreasing} are defined accordingly.

\begin{definition}
Let $a\le b$. A \defin{subdivision}\index{subdivision} or \defin{partition}\index{partition} $\tau$ of the interval $[a,b]$ is a finite set of real numbers $\{t_i\in[a,b]\,:\,i=0,\dots,m\}$ such that
$$a=t_0\le t_1\le\cdots\le t_m=b$$
Define $\Delta_i \tau=t_i-t_{i-1}$ for $i=1,\dots,m$ and $\nu(\mathcal{T})=\max_i \Delta_i \tau$. Moreover, given a mvf $E:[a,b]\rightarrow M_n$, we set $\Delta_i E=\Delta_i^\tau E=E(t_i)-E(t_{i-1})$ for $i=1,\dots,m$. Also define
$$\var_{[a,b]}^\tau E = \sum_{i=1}^m \|\Delta_i E\|$$
Then $E$ is called of \defin{bounded variation}\index{bounded variation}\index{$\var_{[a,b]} E$} if 
$$\var_{[a,b]} E=\sup_{\tau\in\mathcal{T}^b_a} \var^\tau_{[a,b]} E<\infty$$
The space of bounded variation functions (\defin{BV-functions})\index{$\mathrm{BV}([a,b]; M_n)$} with values in $M_n$ is denoted by $\BV([a,b]; M_n)$. 
If $n=1$, we write $\BV([a,b])$. For $f\in\BV([a,b]; M_n)$, we call
$$|E|(t)=\var_{[a,t]} E$$
the \defin{total variation function}\index{total variation}\index{$|E|(t)$} of $E$. It should not be confused with $\|E(t)\|$, which is the matrix norm of $E(t)$.\\
Given a subdivision $\tau$, choose intermediate points $\xi=(\xi_i)_{i=1,\dots,m}$ with $\xi_i\in[t_{i-1},t_i]$. By $\mathcal{T}^b_a$\index{$\mathcal{T}^b_a$} we denote the set of tagged partitions \index{tagged partition} $(\tau,\xi)$ such that $\tau$ is a subdivision of the interval $[a,b]$ and $\xi$ is a choice of corresponding intermediate points.
Given also a function $f$ on $[a,b]$ with values in $\C$ or $M_n$ we define\index{$P(f,E,\tau,\xi)$}
$$P(f,E,\tau,\xi)=P(\tau,\xi)=\prodr^m_{i=1} \exp(f(\xi_i)\Delta_i E)$$
Here $\prodr_{i=1}^m A_i=A_1\cdot A_2\cdots A_m$ denotes multiplication of the matrices $(A_i)_i$ from left to right. We will also often simply write $\prod_{i=1}^m A_i$ for $\prodr_{i=1}^m A_i$. Similarly, $\prodl_{i=1}^m A_i~=~A_m\cdot~A_{m-1}~\cdots~A_1$ denotes multiplication from right to left.
\end{definition}

The $P(\tau,\xi)$ form a net with respect to the directed set $(\mathcal{T}^b_a,\le)$, where we say that $(\tau,\xi)\le (\tau^\prime, \xi^\prime)$ if $\tau^\prime\subset\tau$. Note that $\tau^\prime\subset\tau$ implies $\nu(\tau)\le\nu(\tau^\prime)$, but the converse is not true.

\begin{definition}Let $X=\C$ or $X=M_n$. For a matrix $P\in M_n$ and functions $f~:~[a,b]\rightarrow X$, $E:[a,b]\rightarrow M_n$ we say that $P$ is the \defin{(right) multiplicative Stieltjes integral}\index{multiplicative Stieltjes integral} corresponding to this data if the net $(P(\tau,\xi))_{(\tau,\xi)\in\mathcal{T}^b_a}$ converges to $P$:
\begin{align}
\label{eqn:netlimit}
P=\lim_{\nu(\tau)\rightarrow 0} P(\tau,\xi)
\end{align}
i.e. for every $\epsilon>0$ there exists a $(\tau_0,\xi_0)\in\mathcal{T}^b_a$ such that
$$\|P(\tau,\xi)-P\|<\epsilon\;\text{for every}\;(\tau,\xi)\le (\tau_0,\xi_0)$$
In words, we will often refer to this as "the limit as $\nu(\tau)\rightarrow 0$".
We introduce the notation
$$P=\mint_a^b \exp(f\,dE)=\mint_a^b e^{f\,dE}$$
For short we also write $f\in\mathcal{M}^b_a[E]$ to denote the existence of $\mint^b_a\exp(f\,dE).$
\end{definition}

For the remainder of this section we will be concerned with criteria for the existence of multiplicative integrals.
\begin{lemma}[Cauchy criterion]\index{Cauchy criterion}
\label{lemma:cauchycrit}
Suppose that $f,E,a,b$ are as above and that for every $\epsilon>0$ there exists a $(\tau_0,\xi_0)\in\mathcal{T}^b_a$ such that for all $(\tau,\xi),(\tau^\prime,\xi^\prime)\le (\tau_0,\xi_0)$ we have
$$\|P(f,E,\tau,\xi)-P(f,E,\tau^\prime,\xi^\prime)\|<\epsilon$$
Then the integral $\mint^b_a \exp(f\,dE)$ exists. The converse also holds.
\end{lemma}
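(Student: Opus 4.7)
The plan is to verify the two directions of the equivalence. The easy direction is necessity: if $P=\mint_a^b\exp(f\,dE)$ exists, then for given $\epsilon>0$ choose $(\tau_0,\xi_0)$ so that $\|P(\tau,\xi)-P\|<\epsilon/2$ for all $(\tau,\xi)\le(\tau_0,\xi_0)$, and apply the triangle inequality.

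For sufficiency, the task is to produce a candidate limit $P\in M_n$ and then show the full net converges to it. The natural way is to construct a cofinal sequence in $(\mathcal{T}_a^b,\le)$. Applying the hypothesis successively with $\epsilon=1/k$ produces tagged partitions $(\sigma_k,\eta_k)$; replacing $\sigma_k$ by a common refinement (for instance $\bigcup_{j\le k}\sigma_j$, equipped with any choice of intermediate points $\xi_k$) I may assume that the sequence $(\tau_k,\xi_k)$ is decreasing in the directed set, i.e. $\tau_k\subset \tau_{k+1}$. The Cauchy hypothesis then gives $\|P(\tau_k,\xi_k)-P(\tau_l,\xi_l)\|<1/\min(k,l)$, so the sequence $\bigl(P(\tau_k,\xi_k)\bigr)_{k\in\N}$ is Cauchy in $M_n$. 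Since $M_n$ is complete with respect to the operator norm, it has a limit $P\in M_n$.

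It remains to show that $P(\tau,\xi)\to P$ in the net sense. Given $\epsilon>0$, choose $k$ large enough that both $1/k<\epsilon/2$ and $\|P(\tau_k,\xi_k)-P\|<\epsilon/2$. For any $(\tau,\xi)\le(\tau_k,\xi_k)$ in $\mathcal{T}_a^b$, the Cauchy hypothesis applied at level $1/k$ yields $\|P(\tau,\xi)-P(\tau_k,\xi_k)\|<1/k<\epsilon/2$, and then the triangle inequality gives $\|P(\tau,\xi)-P\|<\epsilon$, which is the defining property of $P=\mint_a^b\exp(f\,dE)$.

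The only step that requires a bit of care is the construction of the cofinal sequence: the directed set $(\mathcal{T}_a^b,\le)$ is not sequential, so one cannot invoke sequential completeness directly. The trick is precisely the refinement step above, together with the observation that $\tau_k\subset\tau_{k+1}$ means $(\tau_{k+1},\xi_{k+1})\le(\tau_k,\xi_k)$ regardless of how the intermediate points $\xi_k$ are chosen. Once this cofinal sequence is in hand, the rest is a direct transcription of the classical Cauchy-completeness argument for Riemann-Stieltjes integrals, transferred to the multiplicative setting via the Banach algebra $M_n$.
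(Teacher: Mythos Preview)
Your argument is correct. The paper's own proof is a single sentence: it observes that the hypothesis says precisely that $(P(\tau,\xi))_{(\tau,\xi)}$ is a Cauchy net in the complete space $M_n$, and therefore converges. What you have written is an explicit in-situ proof of that general fact (Cauchy nets in complete metric spaces converge), carried out via a cofinal decreasing sequence of tagged partitions. So the underlying idea is the same---completeness of $M_n$---but your version is more self-contained, not presupposing the reader knows the net-theoretic statement, at the cost of being longer. Your care with the refinement step (taking $\tau_k=\bigcup_{j\le k}\sigma_j$ so that $(\tau_k,\xi_k)\le(\sigma_j,\eta_j)$ for all $j\le k$, which is what lets you apply the level-$1/k$ hypothesis to both $(\tau,\xi)$ and $(\tau_k,\xi_k)$) is exactly the point where a sloppy write-up would go wrong, and you handled it correctly.
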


\begin{proof}
The condition in the lemma means that $(P(\tau,\xi))_{(\tau,\xi)}$ is a Cauchy net on the complete space $M_n$. Hence it converges.
\end{proof}

\begin{prop}
\label{prop:mintexist1}
Let $f:[a,b]\rightarrow\C$ be Riemann integrable and $E:[a,b]\rightarrow M_n$ Lipschitz continuous. Then the multiplicative integral $\mint^b_a\exp(f\,dE)$ exists.
\end{prop}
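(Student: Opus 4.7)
The plan is to verify the Cauchy criterion from Lemma \ref{lemma:cauchycrit}. Let $M=\sup_{[a,b]}|f|<\infty$ (finite since $f$ is Riemann integrable) and let $L$ denote a Lipschitz constant of $E$, so $\|\Delta_i E\|\le L\Delta_i\tau$ for every partition. Then $\|\exp(f(\xi_i)\Delta_i E)\|\le e^{ML\Delta_i\tau}$, which telescopes to the uniform bound $\|P(f,E,\tau,\xi)\|\le K:=e^{ML(b-a)}$ on every tagged partition.

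Given two tagged partitions, I pass to their common refinement $\tau''=\tau\cup\tau'$ with any choice of tags $\xi''$, so that by the triangle inequality it suffices to bound $\|P(\tau'',\xi'')-P(\tau,\xi)\|$ whenever $\tau''$ refines $\tau=\{t_0<\cdots<t_m\}$. On each $[t_{i-1},t_i]$ let $t_{i-1}=s_0^i<\cdots<s_{k_i}^i=t_i$ be the refinement points with chosen tags $\eta_j^i$, and set
$$R_i=\exp(f(\xi_i)\Delta_i E),\qquad Q_i=\prodr_{j=1}^{k_i}\exp\!\bigl(f(\eta_j^i)(E(s_j^i)-E(s_{j-1}^i))\bigr),$$
so that $P(\tau,\xi)=\prodr_i R_i$ and $P(\tau'',\xi'')=\prodr_i Q_i$. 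The standard telescoping identity for products, combined with the uniform bound $K$ on the partial products, gives
$$\|P(\tau'',\xi'')-P(\tau,\xi)\|\le K^2\sum_{i=1}^m\|Q_i-R_i\|.$$

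To estimate $\|Q_i-R_i\|$ I would interpose $\tilde R_i=\exp\!\bigl(\sum_{j=1}^{k_i}f(\eta_j^i)(E(s_j^i)-E(s_{j-1}^i))\bigr)$. For the piece $\|R_i-\tilde R_i\|$, the identity $e^X-e^Y=\int_0^1 e^{sX}(X-Y)e^{(1-s)Y}ds$ gives $\|e^X-e^Y\|\le\|X-Y\|\,e^{\max(\|X\|,\|Y\|)}$; applied to the exponents of $R_i$ and $\tilde R_i$, whose difference is $\sum_j(f(\xi_i)-f(\eta_j^i))(E(s_j^i)-E(s_{j-1}^i))$, this yields $\|R_i-\tilde R_i\|\le KL\cdot\osc_{[t_{i-1},t_i]}f\cdot\Delta_i\tau$. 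For the piece $\|\tilde R_i-Q_i\|$, write $A_j=f(\eta_j^i)(E(s_j^i)-E(s_{j-1}^i))$ with $\sum_j\|A_j\|\le ML\Delta_i\tau$ and Taylor-expand both $\exp(\sum_j A_j)$ and $\prodr_j\exp(A_j)$ to second order: the constant and linear parts agree, the quadratic discrepancy equals $\tfrac12\sum_{j<j'}[A_{j'},A_j]$, and the cubic-and-higher remainders are controlled by $O((\sum_j\|A_j\|)^3)$. Since $\|[A_{j'},A_j]\|\le 2M^2L^2\Delta_{j'}\tau\,\Delta_j\tau$, both the commutator sum and the tail are $O((\Delta_i\tau)^2)$, so $\|\tilde R_i-Q_i\|\le C(ML\Delta_i\tau)^2$ for some absolute constant $C$.

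Summing over $i$ gives
$$\sum_i\|Q_i-R_i\|\le KL\sum_i\osc_{[t_{i-1},t_i]}f\cdot\Delta_i\tau+CM^2L^2(b-a)\nu(\tau).$$
By the Darboux criterion for Riemann integrability of $f$, the first sum can be made arbitrarily small by choosing $\tau_0$ fine enough, and the second tends to zero with $\nu(\tau)\le\nu(\tau_0)$, establishing the Cauchy condition. The main technical obstacle is the commutator estimate controlling $\|\tilde R_i-Q_i\|$: noncommutativity prevents the naive identity $\prodr_j\exp(A_j)=\exp(\sum_j A_j)$, but the discrepancy is quadratic in $\sum_j\|A_j\|$, which after summing across the subintervals of $\tau$ yields only an $O(\nu(\tau))$ error that vanishes in the limit.
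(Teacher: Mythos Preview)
Your proof is correct and follows the same overall scheme as the paper: verify the Cauchy criterion by comparing a tagged partition to a refinement, control the linear discrepancy via the oscillation of $f$, and show the higher-order error is $O(\nu(\tau))$ after summing. The execution differs in two places. First, you invoke the telescoping identity for products directly, whereas the paper handles a refinement that changes only a single subinterval and then iterates $m$ times; these are equivalent bookkeeping devices. Second, and more substantively, you interpose $\tilde R_i=\exp\bigl(\sum_j A_j\bigr)$ and split $\|Q_i-R_i\|\le\|R_i-\tilde R_i\|+\|\tilde R_i-Q_i\|$, using the integral formula $e^X-e^Y=\int_0^1 e^{sX}(X-Y)e^{(1-s)Y}\,ds$ for the first piece and a commutator analysis for the second. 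The paper avoids this detour entirely: it simply expands \emph{both} $R_i=\exp(f(\xi_i)\Delta_i E)$ and $Q_i=\prod_j\exp(A_j)$ as $I+(\text{linear part})+(\text{remainder})$, observing that each remainder is already $O((\Delta_i\tau)^2)$ since $\sum_j\|A_j\|\le ML\Delta_i\tau$. The linear parts then differ by exactly the oscillation term, and there is no need to isolate the commutator contribution. Your route works, but the paper's first-order expansion is shorter because it never tries to match the quadratic terms.
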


\begin{proof}
By Lebesgue's criterion\index{Lebesgue's criterion} for Riemann integrability we can choose $M~>~0$ such that $|f(t)|\le M$ for all $t\in[a,b]$. Let $L>0$ be a Lipschitz constant for $E$ and set $C=M\cdot L$. Now consider $(\tau,\xi),(\tau^\prime,\xi^\prime)\in\mathcal{T}^b_a$ and assume that $\tau^\prime$ coincides with $\tau$ except on the subinterval $[t_{k-1},t_k]$ for some fixed $k=1,\dots,m$, where it is given by
$$t_{k-1}=s_0\le s_1\le \cdots \le s_l=t_k$$
We denote the intermediate points $\xi^\prime$ in $[t_{i-1},t_i]$ by $\zeta_j\in [s_{j-1},s_j]$ for $j=1,\dots,l$.
Then
\begin{align}
\label{PPest1}
&\begin{aligned}
&\|P(\tau,\xi)-P(\tau^\prime,\xi^\prime)\|\\
&\qquad \le e^{\sum_{j\not=k}|f(\xi_j)|\cdot\|\Delta_j E\|}\|\exp(f(\xi_k)\Delta_k E)-\prod_{j=1}^l \exp(f(\zeta_j)(E(s_{j-1})-E(s_j)))\|\\
&\qquad \le e^{C(b-a)}\|\exp(f(\xi_k)\Delta_k E)-\prod_{j=1}^l \exp(f(\zeta_j)(E(s_{j-1})-E(s_j)))\|
\end{aligned}
\end{align}
Set $A_j=f(\zeta_j)(E(s_j)-E(s_{j-1}))$. Note that
\begin{align}
\label{sumAjest}
\sum_{j=1}^l \|A_j\|\le C\sum_{j=1}^l (s_j-s_{j-1})=C\Delta_k\tau
\end{align}
We now use the power series expansion of $\exp$ to see
\begin{align}
\label{exp2exp}
\prod^l_{j=1}\exp(A_j)=I+\sum^l_{j=1} A_j+R
\end{align}
Apply \eqref{sumAjest} to estimate the remainder term $R$ as follows
\begin{align}
\|R\|\le \exp\left(\sum^l_{j=1} \|A_j\|\right)-1-\sum_{j=1}^l \|A_j\|
\le C^2 (\Delta_k\tau)^2 e^{C(b-a)}
\end{align}
Similarly we obtain
\begin{align}
\label{exp1exp}
\exp(f(\xi_k)\Delta_k E)=I+f(\xi_k)\Delta_k E+R^\prime
\end{align}
where $R^\prime$ also satisfies $\|R^\prime\|\le C^2(\Delta_k\tau)^2 e^{C(b-a)}$. Plugging \eqref{exp1exp} and \eqref{exp2exp} into \eqref{PPest1} yields
\begin{align}
\label{PPest2}
\begin{aligned}
\|P(\tau,\xi)-P(\tau^\prime,\xi^\prime)\| &\le Le^{C(b-a)}\sum_{j=1}^l |f(\xi_k)-f(\zeta_j)|(s_j-s_{j-1}) + 2C^2(\Delta_k\tau)^2e^{2C(b-a)}\\
&\le Le^{C(b-a)}\osc_{[t_{k-1},t_k]} f\cdot \Delta_k \tau+2C^2(\Delta_k\tau)^2e^{2C(b-a)}
\end{aligned}
\end{align}
where for an interval $I\subseteq[a,b]$, $\osc_I f=\sup\{|f(t)-f(s)|\,:\,s,t\in I\}=\sup_I f - \inf_I f$ denotes the oscillation\index{oscillation} of $f$ on $I$.\\
Now let $(\tau^\prime,\xi^\prime)\in\mathcal{T}^b_a$ be arbitrary and write $\tau^\prime=\{s^{(k)}_j\,:\,k=1,\dots,m,\,j=0,\dots,l_k\}$ where
$$a=t_0=s^{(1)}_0\le s^{(1)}_1\le \cdots \le s^{(1)}_{l_1}=t_1=s^{(2)}_0\le\cdots\le s^{(m)}_{l_m}=t_m=b$$
Then we apply the above estimate \eqref{PPest2} $m$ times to obtain
\begin{align}
\label{PPest3}
\|P(\tau,\xi)-P(\tau^\prime,\xi^\prime)\| \le Le^{C(b-a)}\sum_{k=1}^m \osc_{[t_{k-1},t_k]} f\cdot \Delta_k \tau+2C^2e^{2C(b-a)}(b-a)\nu(\tau)
\end{align}
Since $\osc_I f=\sup_I f-\inf_I f$, the Darboux definition of Riemann integrability by upper and lower sums implies that we can make the sum on the right hand side arbitrarily small for small enough $\nu(\tau)$. The claim now follows from Lemma \ref{lemma:cauchycrit}.
\end{proof}

\begin{prop}
\label{prop:mintexist2}
Let $f:[a,b]\rightarrow\C$ be continuous and $E:[a,b]\rightarrow M_n$ be a mvf of bounded variation. Then the multiplicative integral $\mint^b_a\exp(f\,dE)$ exists.
\end{prop}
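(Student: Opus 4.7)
The strategy is to mirror the proof of Proposition \ref{prop:mintexist1} via the Cauchy criterion (Lemma \ref{lemma:cauchycrit}), substituting the bounded-variation estimate $\|\Delta_j E\|\le\Delta_j|E|$ for the Lipschitz bound $\|\Delta_j E\|\le L(s_j-s_{j-1})$. Set $M:=\sup_{[a,b]}|f|$, which is finite by continuity of $f$ on the compact interval, and $V:=\var_{[a,b]}E$. The passage from Lipschitz to BV is possible because in the previous proof the constant $L$ appeared only through bounds of the form $\sum_j \|E(s_j)-E(s_{j-1})\|\le L(t_k-t_{k-1})$, which we now replace by $\sum_j \|E(s_j)-E(s_{j-1})\|\le \Delta_k |E|$.

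Running through the expansion in \eqref{PPest1}--\eqref{PPest2} with this substitution, a single-subinterval refinement $\tau^*$ of $\tau$ in $[t_{k-1},t_k]$ yields
\begin{align*}
\|P(\tau,\xi)-P(\tau^*,\xi^*)\| \le e^{MV}\bigl[\osc_{[t_{k-1},t_k]}f\cdot\Delta_k|E| + C\,M^2(\Delta_k|E|)^2\,e^{MV}\bigr],
\end{align*}
for an absolute constant $C$. Iterating $m$ times as in \eqref{PPest3}, and handling a general Cauchy comparison between two refinements of a common $\tau_0$ by going through the common refinement $\tau\cup\tau'$, one arrives at
\begin{align*}
\|P(\tau,\xi)-P(\tau^*,\xi^*)\| \le e^{MV}\sum_k\osc_{[t_{k-1},t_k]}f\cdot\Delta_k|E| + CM^2 e^{2MV}\sum_k(\Delta_k|E|)^2.
\end{align*}
The first sum is made arbitrarily small by uniform continuity of $f$ on the compact $[a,b]$: given $\epsilon>0$, one chooses $\delta>0$ with $\osc_I f<\epsilon$ whenever $|I|<\delta$, and then $\nu(\tau)<\delta$ bounds the sum by $\epsilon V$; this mirrors the Darboux step at the end of the previous proof.

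The main obstacle is controlling the quadratic sum $\sum_k(\Delta_k|E|)^2\le V\cdot\max_k\Delta_k|E|$. In the Lipschitz setting this was automatic from $\Delta_k|E|\le L\nu(\tau)$, but here $|E|$ may have jumps, so $\max_k\Delta_k|E|$ need not vanish with the mesh. One overcomes this by observing that the non-decreasing bounded function $|E|$ has only finitely many jumps larger than any given threshold $\eta$; including all such large-jump points of $|E|$ in $\tau_0$, a compactness argument gives $\Delta_k|E|<\eta$ on the remaining subintervals for sufficiently small mesh. The subintervals adjacent to a large jump $t^*$ retain $\Delta_k|E|$ of the order of the jump, but the corresponding factor $\exp(f(\xi_k)\Delta_k E)$ stabilizes by continuity of $f$ as $t_{k-1}\to t^{*-}$ (and $\xi_k\to t^*$), so one estimates the contribution from these subintervals separately by extracting a well-defined left-jump factor $\exp(f(t^*)(E(t^*)-E(t^{*-})))$, and analogously on the right. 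Combining these ingredients verifies the Cauchy criterion and establishes existence of $\mint_a^b\exp(f\,dE)$.
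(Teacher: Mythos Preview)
The paper omits the proof entirely, saying only that it is ``very similar'' to that of Proposition~\ref{prop:mintexist1} and citing Potapov. Your approach---mirroring that proof with $\|\Delta_j E\|\le\Delta_j|E|$ in place of the Lipschitz bound and using uniform continuity of $f$ to control the oscillation sum---is exactly what the paper suggests, and for \emph{continuous} $E$ it goes through without further ado, since then $\max_k\Delta_k|E|\to0$ with the mesh and the quadratic sum vanishes.

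You are right that the genuine new difficulty, not visible from the paper's one-line remark, is the quadratic remainder $\sum_k(\Delta_k|E|)^2$ when $E$ has jumps: this sum is bounded below by the square of the largest jump regardless of the partition, so it cannot be made small directly. Your plan to isolate finitely many large jumps and treat the adjacent subintervals separately is the correct idea, and the ``compactness argument'' you invoke does work (for an increasing bounded $g=|E|$ and $\eta>0$, there are finitely many jump points $F$ and a $\delta>0$ such that any subinterval of length $<\delta$ disjoint from $F$ has $g$-increment $<\eta$; this follows by contradiction and sequential compactness). However, two points remain genuinely sketchy. First, even after including the large-jump points in $\tau_0$, the subintervals \emph{ending} at such a point $c$ still carry $\Delta_k|E|\ge\|E(c)-E(c^-)\|$, so the quadratic error on those $O(|F|)$ intervals is not small; you acknowledge this but the remedy---``extracting a well-defined left-jump factor $\exp(f(t^*)(E(t^*)-E(t^{*-})))$''---needs a real argument showing that both the coarse factor and all refined products on $[t_{k-1},c]$ converge to this same limit as $t_{k-1}\to c^-$, uniformly in the refinement. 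Second, the sum of small jumps in a short interval need not itself be small, so the decomposition must be set up as ``top $N$ jumps versus the rest with total mass $<\eta$'' rather than ``jumps of size $\ge\eta$ versus smaller ones''. These are fillable gaps, not errors, and your write-up is already more explicit about the difficulty than the paper's.
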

The proof is very similar as for the last proposition (see \cite[Appendix \S 1.1]{Potapov}, so we omit it. We do not claim that these existence results are in any way optimal. However, they are sufficient for the purpose of this thesis.

\subsection{Properties}
In this section we state and prove several important properties for multiplicative integrals. Among them are in particular a formula for the determinant, a change of variables formula and some estimates relating multiplicative integrals to (additive) Riemann-Stieltjes integrals.\\

First let us consider the case when the multiplicative integral reduces to an additive integral.

\begin{prop}
If $f\in\mathcal{M}_a^b[E]$ and the family of matrices $\{E(t)\,:\,t~\in~[a,b]\}$ commutes, then
$$\mint_a^b \exp(f(t)dE(t))=\exp\left(\int_a^b f(t) dE(t)\right)$$
In particular, this is always the case if $n=1$.
\end{prop}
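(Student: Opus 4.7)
The plan is to exploit the commutativity hypothesis to collapse the Riemann-Stieltjes product into a single exponential of a Riemann-Stieltjes sum, and then pass to the limit.

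First I would observe that if the family $\{E(t) : t\in[a,b]\}$ commutes, then so do the differences $\Delta_i E = E(t_i)-E(t_{i-1})$, and since $f(\xi_i)$ is a scalar, the family $\{f(\xi_i)\Delta_i E : i=1,\dots,m\}$ is a commuting family of matrices. For commuting matrices $A,B\in M_n$ one has $\exp(A)\exp(B)=\exp(A+B)$ (by comparing power series termwise, which is valid since all the binomial identities used in the proof of the scalar case go through unchanged). Applying this inductively to the product defining $P(\tau,\xi)$ gives
\begin{align*}
P(\tau,\xi) = \prodr_{i=1}^m \exp(f(\xi_i)\Delta_i E) = \exp\!\left(\sum_{i=1}^m f(\xi_i)\Delta_i E\right) = \exp(S(\tau,\xi)),
\end{align*}
where $S(\tau,\xi)$ is precisely the ordinary Riemann-Stieltjes sum associated to $(\tau,\xi)$ for $\int_a^b f\,dE$.

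Next I would pass to the limit. The hypothesis $f\in\mathcal{M}_a^b[E]$ ensures that $P(\tau,\xi)$ converges in the net sense. Under the same regularity assumptions that guarantee existence of $\mint$ (for instance those of Propositions \ref{prop:mintexist1} and \ref{prop:mintexist2}, namely $f$ Riemann integrable and $E$ Lipschitz, or $f$ continuous and $E\in\BV$), the additive Riemann-Stieltjes integral $\int_a^b f\,dE$ exists as the net-limit of $S(\tau,\xi)$. Continuity of $\exp:M_n\to M_n$ then gives
\begin{align*}
\mint_a^b \exp(f\,dE) \;=\; \lim_{\nu(\tau)\to 0} \exp(S(\tau,\xi)) \;=\; \exp\!\left(\lim_{\nu(\tau)\to 0} S(\tau,\xi)\right) \;=\; \exp\!\left(\int_a^b f\,dE\right),
\end{align*}
which is the claimed identity. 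For the final assertion, when $n=1$ all matrices are scalars and trivially commute, so the formula applies without extra hypothesis.

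The only delicate point is the collapse step, which relies on $\exp(A)\exp(B)=\exp(A+B)$ for commuting $A,B$; everything else is routine given earlier results. In particular, no log-branch issues arise because we pass from $S$ to $\exp(S)$, not the other way around, so continuity of $\exp$ suffices and the (multivalued) inverse $\log$ is never used.
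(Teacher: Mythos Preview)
Your argument is correct and is precisely the approach the paper takes; the paper's proof is the single sentence ``This follows from the relation $e^{A+B}=e^Ae^B$ which holds if $AB=BA$,'' and your write-up simply spells out the collapse $P(\tau,\xi)=\exp(S(\tau,\xi))$ and the passage to the limit that this sentence encodes. If anything you are more careful than the paper, since you flag that existence of the additive integral $\int_a^b f\,dE$ is not literally contained in the hypothesis $f\in\mathcal{M}_a^b[E]$ and must be supplied by the standing regularity assumptions.
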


\begin{proof}
This follows from the relation $e^{A+B}=e^A e^B$ which holds if $AB~=~BA$.
\end{proof}

The next property will be of fundamental importance in later arguments and allows us to decompose multiplicative integrals into products with respect to a decomposition of the interval. 
\begin{prop}
\label{prop:mintsep}
Let $a\le b\le c$. If $f\in\mathcal{M}^c_a[E]$, then
$f\in\mathcal{M}^b_a[E]\cap\mathcal{M}^c_b[E]$ and
$$\mint_a^c \exp(f\,dE)=\mint_a^b \exp(f\,dE) \mint_b^c \exp(f\,dE)$$
\end{prop}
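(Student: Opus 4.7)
The plan is to first establish existence of the two partial integrals $\mint_a^b$ and $\mint_b^c$ via the Cauchy criterion (Lemma \ref{lemma:cauchycrit}), and then derive the product formula by restricting to a cofinal sub-net of tagged partitions of $[a,c]$ that contain $b$ as a division point. The whole argument rests on two observations: (i) a tagged partition of $[a,c]$ that contains $b$ splits at $b$ into tagged partitions of $[a,b]$ and $[b,c]$, with a corresponding factorization $P(\tau, \xi) = P(\tau|_{[a,b]}, \xi|_{[a,b]}) \cdot P(\tau|_{[b,c]}, \xi|_{[b,c]})$; and (ii) every Riemann-type product $P(\tau, \xi)$ is invertible, since it is a product of matrix exponentials.

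For existence, I would argue as follows. Given $\epsilon > 0$, apply the Cauchy criterion on $[a,c]$ to produce a reference $(\tau_0, \xi_0) \in \mathcal{T}^c_a$ which, after refinement if necessary, I may assume contains $b$. Split $(\tau_0, \xi_0)$ at $b$ into pieces $(\tau_0^1, \xi_0^1) \in \mathcal{T}^b_a$ and $(\tau_0^2, \xi_0^2) \in \mathcal{T}^c_b$. For any two tagged partitions $(\sigma, \eta), (\sigma', \eta') \in \mathcal{T}^b_a$ refining $(\tau_0^1, \xi_0^1)$, the concatenations $(\sigma \cup \tau_0^2, \eta \cup \xi_0^2)$ and $(\sigma' \cup \tau_0^2, \eta' \cup \xi_0^2)$ refine $(\tau_0, \xi_0)$ in $\mathcal{T}^c_a$; by (i) the Cauchy condition on $[a,c]$ then gives
\[
\|(P(\sigma, \eta) - P(\sigma', \eta')) \cdot P(\tau_0^2, \xi_0^2)\| < \epsilon.
\]
Multiplying on the right by the inverse from (ii) yields
\[
\|P(\sigma, \eta) - P(\sigma', \eta')\| \le \|P(\tau_0^2, \xi_0^2)^{-1}\| \cdot \epsilon,
\]
and since $\|P(\tau_0^2, \xi_0^2)^{-1}\|$ is a fixed constant, this establishes the Cauchy condition on $[a,b]$. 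Existence of $\mint_b^c \exp(f\,dE)$ follows from the symmetric argument.

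Once both partial integrals exist, the sub-net of tagged partitions in $\mathcal{T}^c_a$ whose division set contains $b$ is cofinal (any partition can be refined to include $b$), so its net limit coincides with $\mint_a^c \exp(f\,dE)$. Along this sub-net, observation (i) factors each $P(\tau, \xi)$ into a left piece over $[a,b]$ and a right piece over $[b,c]$; these converge respectively to $\mint_a^b \exp(f\,dE)$ and $\mint_b^c \exp(f\,dE)$, since the corresponding projected sub-nets are cofinal in $\mathcal{T}^b_a$ and $\mathcal{T}^c_b$. Continuity of matrix multiplication then yields the desired identity.

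The main technical obstacle is the Cauchy-to-Cauchy transfer in the existence step: I must arrange the fixed right-hand factor $P(\tau_0^2, \xi_0^2)$ to be independent of the varying partitions $(\sigma, \eta), (\sigma', \eta')$ so that $\|P(\tau_0^2, \xi_0^2)^{-1}\|$ is a uniform constant, and I must verify that the concatenations genuinely refine the reference partition in $\mathcal{T}^c_a$—which is precisely why we insist that $b \in \tau_0$ at the outset. Everything else is bookkeeping with tagged partitions.
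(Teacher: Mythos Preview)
Your proof is correct and follows essentially the same strategy as the paper's: extend partial-interval partitions to $[a,c]$ to invoke the Cauchy criterion, then split partitions at $b$ for the product formula. Your existence argument is in fact more complete than the paper's one-line sketch—you correctly identify that cancelling the fixed right factor $P(\tau_0^2,\xi_0^2)$ requires its invertibility—and for the product formula you appeal directly to cofinality and continuity of multiplication, whereas the paper bounds $\|I_a^b I_b^c - P(\tau_0,\xi_0)P(\tau_1,\xi_1)\|$ via the bilinear identity $xy-zw=\tfrac12\big((x-z)(y+w)+(x+z)(y-w)\big)$.
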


\begin{proof}
To a partition of $[a,b]$ or $[b,c]$ we can always add one more point to make it a partition of $[a,c]$. Therefore Lemma \ref{lemma:cauchycrit} implies $f\in\mathcal{M}^b_a[E]~\cap~\mathcal{M}^c_b[E]$.\\
For brevity let us write $I_a^c, I^b_a, I^c_b$, respectively for the three multiplicative integrals in the claim. On the other hand, given $(\tau,\xi)\in\mathcal{T}^c_a$, we find $(\tau_0,\xi_0)\in\mathcal{T}^b_a$, $(\tau_1,\xi_1)\in\mathcal{T}^c_b$ such that $\nu(\tau_i)\le \nu(\tau)$ for $i=0,1$ and $P(\tau,\xi)=P(\tau_0,\xi_0)P(\tau_1,\xi_1)$. Let us choose $\epsilon>0, \delta>0$ such that for $(\tau,\xi)\in\mathcal{T}^c_a$ with $\nu(\tau)<\delta$ we have $\|P(\tau,\xi)-I_a^c\|<\epsilon$ and the same holds correspondingly for such partitions of the subintervals $[a,b]$, $[b,c]$. Then,
$$\|I_a^c - I_a^b I_b^c\|\le \|I_a^c-P(\tau,\xi)\|+\|I_a^bI_b^c-P(\tau,\xi)|<\epsilon+\|I_a^bI_b^c-P(\tau_0,\xi_0)P(\tau_1,\xi_1)\|$$
Now applying the identity $xy-zw=\frac{1}{2}\left((x-z)(y+w)+(y-w)(x+z)\right)$, we can estimate the remaining term as
$\|I_a^bI_b^c-P(\tau_0,\xi_0)P(\tau_1,\xi_1)\|\le C\epsilon$, where $C>0$ is an appropriate constant. Letting $\epsilon\rightarrow 0$ we obtain $I_a^c=I_a^bI_b^c$.
\end{proof}

\begin{prop}
\label{prop:mintconst}
Let $A$ be a constant matrix. Then 
$$\mint_a^b \exp(A\,d(tI)) = \exp((b-a)A)$$
\end{prop}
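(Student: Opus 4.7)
The proof should be essentially a one-liner exploiting that all Riemann-product factors commute. My plan is the following.

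Given any tagged partition $(\tau,\xi)\in\mathcal{T}_a^b$ with $\tau:a=t_0\le t_1\le\cdots\le t_m=b$, I would first write out the defining product explicitly. With $E(t)=tI$ we have $\Delta_i E=(t_i-t_{i-1})I$, and taking $f\equiv A$ gives
\[
P(A,tI,\tau,\xi)=\prodr_{i=1}^m\exp\bigl(A(t_i-t_{i-1})I\bigr)=\prodr_{i=1}^m\exp\bigl((t_i-t_{i-1})A\bigr).
\]
Note that this expression does not depend on the choice of intermediate points $\xi$, which already hints that no genuine limit argument is needed.

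Next I would observe that all the factors in the product are polynomials (indeed power series) in the single matrix $A$, hence they commute pairwise. Therefore the standard identity $e^{X}e^{Y}=e^{X+Y}$ for commuting matrices $X,Y$ applies repeatedly, yielding
\[
P(A,tI,\tau,\xi)=\exp\!\left(\sum_{i=1}^m(t_i-t_{i-1})A\right)=\exp\bigl((b-a)A\bigr).
\]

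Finally, since $P(\tau,\xi)$ takes the constant value $\exp((b-a)A)$ on the entire directed set $\mathcal{T}_a^b$, the net trivially converges to $\exp((b-a)A)$, which establishes both the existence of the multiplicative integral and the claimed formula. There is no real obstacle here: the whole content of the statement is that the commuting case collapses to the scalar-like behaviour already recorded in the preceding proposition (with the commuting family $\{tI:t\in[a,b]\}$), and one could alternatively derive it as an immediate corollary by taking the exponential outside the ordinary integral $\int_a^b A\,d(tI)=(b-a)A$.
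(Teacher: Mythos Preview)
Your proposal is correct and essentially identical to the paper's own proof, which simply computes $P(\tau,\xi)=\prod_{j=1}^m\exp(A\,\Delta_j\tau)=\exp((b-a)A)$ for an arbitrary tagged partition. You have merely spelled out the commutativity justification that the paper leaves implicit.
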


\begin{proof}
Let $(\tau,\xi)\in\mathcal{T}^b_a$. Then
$$P(\tau,\xi)=\prod^m_{j=1}\exp(A\Delta_j\tau)=\exp(A(b-a))$$
\end{proof}

\begin{example}
Let $A=\begin{pmatrix}0 & 1\\1 & 0\end{pmatrix}$. Then
$$\mint_0^1 \exp(A d(tI))=\exp(A)=\begin{pmatrix}\cosh(1) & \sinh(1)\\ \sinh(1) & \cosh(1)\end{pmatrix}$$
\end{example}

Unfortunately, there is no general formula for integrating a constant with respect to a general integrator. Typically we have
\begin{align}
\label{eqn:neqmintconst}
\mint_a^b \exp(A\,dE(t))\not=\exp(A\cdot (E(b)-E(a)))
\end{align}

\begin{example}
Let $A$ be as in the last example and $B=\mathrm{diag}(1,2)$. Then $AB~\not=~BA$. Set $E(t)=tA$ for $t\in[-1,0)$ and $E(t)=tB$ for $t\in [0, 1]$. Then
$$\mint_{-1}^1 \exp(dE(t))=e^A\cdot e^B\not= e^{A+B}=e^{E(1)-E(-1)}$$
\end{example}
But equality holds of course in \eqref{eqn:neqmintconst} if the family of matrices $\{A\}\cup\{E(t)\,:\,t\in[a,b]\}$ commutes.

\begin{prop}[Determinant formula]\index{determinant formula}
\label{prop:mintdet}
For $f\in\mathcal{M}^b_a[E]$ we have
$$\det\mint_a^b \exp(f(t)\,dE(t))=\exp\left(\int_a^b f(t)\,d\tr E(t)\right)$$
In particular, multiplicative integrals always yield invertible matrices.
\end{prop}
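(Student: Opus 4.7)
My plan is to reduce the identity to two well-known facts: multiplicativity of the determinant and Jacobi's formula $\det \exp X = \exp \tr X$. For an arbitrary tagged partition $(\tau, \xi) \in \mathcal{T}_a^b$, these two facts yield the exact finite identity
$$\det P(f, E, \tau, \xi) \;=\; \prod_{i=1}^m \det\exp\bigl(f(\xi_i)\Delta_i E\bigr) \;=\; \prod_{i=1}^m \exp\bigl(\tr(f(\xi_i)\Delta_i E)\bigr) \;=\; \exp\bigl(S(\tau,\xi)\bigr),$$
where by linearity of the trace $S(\tau,\xi) := \sum_{i=1}^m f(\xi_i)\,\tr(\Delta_i E) = \sum_{i=1}^m f(\xi_i)\,\Delta_i(\tr E)$ is nothing but the ordinary Riemann--Stieltjes sum for $f$ against the scalar function $\tr E$.

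Next I would pass to the limit $\nu(\tau) \to 0$. Since $f \in \mathcal{M}_a^b[E]$ by assumption, $P(\tau,\xi)$ converges as a net to $P := \mint_a^b \exp(f\,dE)$, so continuity of $\det$ on $M_n$ yields $\det P(\tau,\xi) \to \det P$. In parallel, under the standing existence hypotheses of Propositions \ref{prop:mintexist1} and \ref{prop:mintexist2}, the scalar function $\tr E$ inherits Lipschitz continuity or bounded variation from $E$ (since $\tr$ is a continuous linear functional on $M_n$), and thus the classical Riemann--Stieltjes integral $\int_a^b f\,d\tr E$ exists by standard scalar theory. Consequently $S(\tau,\xi) \to \int_a^b f\,d\tr E$ as a net, and continuity of $\exp$ lets the finite identity pass to the limit, producing $\det P = \exp\bigl(\int_a^b f\,d\tr E\bigr)$, as desired. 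The invertibility statement in the last sentence is then immediate, since $\exp$ never vanishes on $\C$.

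The only nontrivial point is the convergence of $S(\tau,\xi)$, but this is not really an obstacle: in each of the two regularity regimes considered in this section, $\tr E$ is itself a scalar BV or Lipschitz function, so the existence of $\int_a^b f\,d\tr E$ follows from textbook material. In greater generality one could instead extract convergence of $S$ directly from the already-known net convergence of $\exp S(\tau,\xi) = \det P(\tau,\xi)$ to a nonzero limit, but this is not needed for the applications later in the thesis.
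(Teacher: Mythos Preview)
Your proof is correct and follows essentially the same route as the paper: compute $\det P(\tau,\xi)$ via multiplicativity of $\det$ and the identity $\det\exp X=\exp\tr X$, obtaining the exponential of a Riemann--Stieltjes sum for $f$ against $\tr E$, then pass to the limit by continuity of the determinant. You are simply more explicit than the paper about the convergence of the scalar Riemann--Stieltjes sums $S(\tau,\xi)$, which the paper leaves implicit.
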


\begin{proof}
For $(\tau,\xi)\in\mathcal{T}^b_a$ we have
$$\det P(\tau,\xi)=\prod^m_{j=1} \det\exp(f(\xi_j)\Delta_j E) = \exp\left(\sum^m_{j=1} f(\xi_j) \Delta_j (\tr E)\right)$$
The claim now follows by continuity of the determinant.
\end{proof}

\begin{prop}
\label{prop:mintunitaryconst}
Let $f\in\mathcal{M}^b_a[E]$ and $U$ a constant invertible matrix. Then
$$U\mint^b_a \exp(f\,dE)U^{-1}=\mint^b_a\exp(f\,d(UEU^{-1}))$$
\end{prop}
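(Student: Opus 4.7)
The plan is to pass the conjugation by $U$ through the finite Riemann–Stieltjes-type products $P(f,E,\tau,\xi)$ and then take the limit. The key algebraic fact is that for any invertible $U$ and any matrix $A$, $U \exp(A) U^{-1} = \exp(UAU^{-1})$, which follows termwise from the power series.

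First I would fix a tagged partition $(\tau,\xi)\in\mathcal{T}^b_a$ and write
\[
U\, P(f,E,\tau,\xi)\, U^{-1} = U\left(\prodr_{i=1}^m \exp(f(\xi_i)\Delta_i E)\right)U^{-1}.
\]
Inserting $U^{-1}U = I$ between consecutive factors, this becomes
\[
\prodr_{i=1}^m U\exp(f(\xi_i)\Delta_i E)U^{-1} = \prodr_{i=1}^m \exp\!\bigl(f(\xi_i)\, U(\Delta_i E) U^{-1}\bigr).
\]
Since $U(\Delta_i E)U^{-1} = \Delta_i (UEU^{-1})$, the right-hand side equals $P(f, UEU^{-1},\tau,\xi)$. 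Hence for every tagged partition,
\[
U\, P(f,E,\tau,\xi)\, U^{-1} = P(f, UEU^{-1},\tau,\xi).
\]

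Now I would take the net limit as $\nu(\tau) \to 0$. The map $X \mapsto UXU^{-1}$ on $M_n$ is linear, hence continuous (bounded by $\|U\|\cdot\|U^{-1}\|$ in operator norm). By assumption $f\in\mathcal{M}^b_a[E]$, so $P(f,E,\tau,\xi)$ converges to $\mint_a^b \exp(f\,dE)$, and therefore
\[
P(f, UEU^{-1},\tau,\xi) = U\, P(f,E,\tau,\xi)\, U^{-1} \longrightarrow U\left(\mint_a^b \exp(f\,dE)\right)U^{-1}.
\]
This simultaneously shows that the multiplicative integral $\mint_a^b \exp(f\,d(UEU^{-1}))$ exists and that it equals the claimed value.

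There is no real obstacle here; the only care needed is in the passage from $U\exp(A)U^{-1}=\exp(UAU^{-1})$ applied factor-by-factor to the whole product, which is handled by the telescoping insertion of $U^{-1}U$ between the $m$ factors. Everything else is continuity of conjugation combined with the Cauchy-net definition of the multiplicative integral.
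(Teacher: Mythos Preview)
Your proof is correct and follows exactly the paper's approach: the paper's entire proof is the single sentence ``This follows from $Ue^AU^{-1}=e^{UAU^{-1}}$,'' which is precisely the identity you apply factor-by-factor to the Riemann products before passing to the limit. You have simply written out the details the paper leaves implicit.
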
 

\begin{proof}
This follows from $Ue^AU^{-1}=e^{UAU^{-1}}$.
\end{proof}

The next fact will be useful to determine when a multiplicative integral is unitary.

\begin{prop}
\label{prop:mintgram}
Suppose $E$ is Hermitian and $f\in\mathcal{M}^b_e[E]$. Let
$$A=\mint_a^b \exp(f\,dE)$$
Then 
$$AA^*=\mint_a^b \exp(2\Re f\,dE)$$
\end{prop}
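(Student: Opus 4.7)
The plan is to pass to Riemann product approximations and exploit a per-factor identity that the Hermitianness of $E$ provides. Fix a tagged partition $(\tau,\xi) \in \mathcal{T}_a^b$ and write $A_i = \exp(f(\xi_i)\Delta_i E)$, so that $P(\tau,\xi) = A_1 A_2 \cdots A_m$. Since $E(t)$ is Hermitian, each increment $\Delta_i E = E(t_i) - E(t_{i-1})$ is Hermitian, and hence $A_i^* = \exp(\overline{f(\xi_i)}\,\Delta_i E)$. The two matrices $f(\xi_i)\Delta_i E$ and $\overline{f(\xi_i)}\Delta_i E$ are scalar multiples of the same Hermitian matrix, so they commute, and therefore
\[
A_i A_i^* = \exp\bigl((f+\bar f)(\xi_i)\,\Delta_i E\bigr) = \exp\bigl(2\Re f(\xi_i)\,\Delta_i E\bigr).
\]

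Let $\tilde Q(\tau,\xi) = \prod_{i=1}^m A_i A_i^* = \prod_{i=1}^m \exp(2\Re f(\xi_i)\Delta_i E)$. This is precisely the Riemann product $P(2\Re f,E,\tau,\xi)$ defining the right-hand side of the claim, and existence of $\mint_a^b \exp(2\Re f\,dE)$ follows from the same criterion that gives $f \in \mathcal{M}_a^b[E]$ (Proposition~\ref{prop:mintexist1} or~\ref{prop:mintexist2}), so $\tilde Q \to \mint_a^b \exp(2\Re f\,dE)$ as $\nu(\tau)\to 0$. By continuity of the adjoint in matrix norm, $P(\tau,\xi)^* \to A^*$, hence $PP^* \to AA^*$. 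Thus the proposition reduces to showing
\[
\lim_{\nu(\tau)\to 0} \bigl\|P(\tau,\xi)\,P(\tau,\xi)^* - \tilde Q(\tau,\xi)\bigr\| = 0.
\]

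This last step is where the real work lies. The two products $PP^* = A_1 A_2 \cdots A_m A_m^* \cdots A_2^* A_1^*$ and $\tilde Q = (A_1 A_1^*)(A_2 A_2^*)\cdots(A_m A_m^*)$ involve the same factors in different orders, so their difference should be estimated by commuting adjoint factors past forward ones one at a time. Setting $R_k = A_k\cdots A_m A_m^* \cdots A_k^*$ and $\tilde R_k = (A_k A_k^*)(A_{k+1}A_{k+1}^*)\cdots (A_m A_m^*)$ one obtains the recursion
\[
R_k - \tilde R_k = A_k\,[R_{k+1},A_k^*] + A_k A_k^* \,(R_{k+1} - \tilde R_{k+1}),
\]
and everything reduces to bounding the commutator term. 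A naive estimate $\|[R_{k+1},A_k^*]\| \le 2\|A_k^* - I\|\cdot\|R_{k+1}\| = O(\|\Delta_k E\|)$ summed over $k$ only gives a total error of order $\var_{[a,b]} E$, which does not vanish — this will be the main obstacle. The way out is to exploit second-order cancellation: writing $A_k^* = I + \overline{f(\xi_k)}\Delta_k E + O(\|\Delta_k E\|^2)$ and tracking the deviation of $R_{k+1}$ from the identity through the telescope produces bilinear contributions of shape $\|\Delta_k E\|\cdot\|\Delta_j E\|$, whose total sum is controlled by $\nu(\tau)\cdot \var_{[a,b]} E$ and hence tends to $0$. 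Uniform control of $\|A_k\|$, $\|R_k\|$ (via bounds on $|f|$ and $\var E$, in analogy with the estimates in the proof of Proposition~\ref{prop:mintexist1}) keeps the constants in check throughout, and combining everything yields the claim.
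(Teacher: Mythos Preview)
Your instinct not to simply assert $P(\tau,\xi)P(\tau,\xi)^* = \tilde Q(\tau,\xi)$ is exactly right: for a fixed partition one has $PP^* = A_1\cdots A_m A_m^*\cdots A_1^*$, which is in general \emph{not} equal to $\tilde Q = (A_1A_1^*)(A_2A_2^*)\cdots(A_mA_m^*)$. The paper's own proof errs precisely by writing this equality down as though it were an identity and then passing to the limit. Your reduction to a commutator estimate, and the recursion $R_k - \tilde R_k = A_k[R_{k+1},A_k^*] + A_kA_k^*(R_{k+1}-\tilde R_{k+1})$, are correct and already more careful than the source.

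The gap is in your closing claim. You assert that the bilinear cross-terms $\|\Delta_k E\|\cdot\|\Delta_j E\|$ sum to something controlled by $\nu(\tau)\cdot\var_{[a,b]}E$; but unrolling the recursion yields a total of order $(\var_{[a,b]}E)^2$, which does not tend to zero with the mesh --- and cannot, because the proposition as stated is false. Take $f\equiv 1$ on $[0,1]$ and set $E(t)=tH_1$ for $t\in[0,\tfrac12]$, $E(t)=\tfrac12 H_1+(t-\tfrac12)H_2$ for $t\in[\tfrac12,1]$, with $H_1,H_2$ Hermitian and non-commuting. Then $E$ is Hermitian and Lipschitz, and Propositions~\ref{prop:mintsep} and~\ref{prop:mintconst} give $A=e^{H_1/2}e^{H_2/2}$, hence
\[
AA^*=e^{H_1/2}e^{H_2}e^{H_1/2}\quad\text{while}\quad \mint_0^1\exp(2\,dE)=e^{H_1}e^{H_2},
\]
and these disagree whenever $e^{H_1/2}$ and $e^{H_2}$ fail to commute. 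So the commutator $[R_{k+1},A_k^*]$ genuinely contributes at first order in $\|\Delta_k E\|$ with an $O(1)$ coefficient $\|R_{k+1}-I\|$, and no refinement of the telescope will close the gap. The identity does survive in degenerate situations --- when the increments $\{\Delta_k E\}$ all commute, or when $f$ is purely imaginary so that each $A_k$ is unitary and both sides equal $I$ --- and it is essentially the latter case that the paper's subsequent applications actually need.
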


\begin{proof}
Let $(\tau,\xi)\in\mathcal{T}^b_a$. We have
$$\left(\prodr_{k=1}^m \exp(f(\xi_k)\Delta_k E)\right)^*=\prodl_{k=1}^m \exp(\overline{f(\xi_k)} \Delta_k E)$$
Further
$$\left(\prodr_{k=1}^m \exp(f(\xi_k)\,\Delta_k E)\right)\left(\prodl_{k=1}^m \exp(\overline{f(\xi_k)}\,\Delta_k E)\right)=\prodr_{k=1}^m \exp((f(\xi_k)+\overline{f(\xi_k)})\,\Delta_k E)$$
Letting $\nu(\tau)\rightarrow 0$ gives the claim.
\end{proof}

Now we prove two important estimates for multiplicative integrals in terms of additive Riemann-Stieltjes integrals.

\begin{prop}
\label{prop:mintest}
Assume that $f\in\mathcal{M}^b_a[E]$, $E\in\BV([a,b]; M_n)$ and $|f|$ is Stieltjes integrable with respect to $|E|$. Then
$$
\left\|\mint_a^b \exp(f(t)\,dE(t))\right\|\le \exp\left(\int^b_a |f(t)| d|E|(t)\right)$$
\end{prop}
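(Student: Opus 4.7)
The plan is to use submultiplicativity of the matrix norm together with the estimate $\|e^A\|\le e^{\|A\|}$ on each factor, then compare the resulting additive Riemann--Stieltjes sum with the integral $\int_a^b |f|\,d|E|$.

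Fix a tagged partition $(\tau,\xi)\in\mathcal{T}^b_a$. Since the matrix norm is submultiplicative, we have
\begin{align*}
\|P(f,E,\tau,\xi)\|=\Bigl\|\prodr_{i=1}^m\exp(f(\xi_i)\Delta_i E)\Bigr\|\le\prod_{i=1}^m\|\exp(f(\xi_i)\Delta_i E)\|\le\prod_{i=1}^m e^{|f(\xi_i)|\,\|\Delta_i E\|},
\end{align*}
using the standard bound $\|e^A\|\le e^{\|A\|}$ (listed among the matrix-norm facts in the appendix). Since by definition $\|\Delta_i E\|=\|E(t_i)-E(t_{i-1})\|\le |E|(t_i)-|E|(t_{i-1})=\Delta_i|E|$, this yields
\begin{align*}
\|P(f,E,\tau,\xi)\|\le\exp\!\left(\sum_{i=1}^m |f(\xi_i)|\,\Delta_i|E|\right).
\end{align*}

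The expression inside the exponential on the right is exactly a Riemann--Stieltjes sum for $\int_a^b|f(t)|\,d|E|(t)$ with respect to the increasing function $|E|$. By hypothesis $|f|$ is Stieltjes integrable with respect to $|E|$, so these sums converge to $\int_a^b|f|\,d|E|$ along the directed set $(\mathcal{T}^b_a,\le)$ of tagged partitions ordered by refinement, in exactly the same sense in which the multiplicative integral is defined.

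To finish, pass to the limit $\nu(\tau)\to 0$: the left-hand side converges to $\|\mint_a^b\exp(f\,dE)\|$ by continuity of the norm, and the right-hand side converges to $\exp(\int_a^b|f|\,d|E|)$ by continuity of $\exp$ and the remark above. This gives the claimed estimate. The only delicate point is matching the two notions of limit, but since Stieltjes integrability of $|f|$ with respect to $|E|$ gives convergence of the partial sums along the refinement net (not merely along sequences of shrinking mesh), both limits are taken in the same directed sense and the estimate passes through cleanly.
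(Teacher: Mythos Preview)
Your proof is correct and follows essentially the same route as the paper's: bound $\|P(\tau,\xi)\|$ via submultiplicativity and $\|e^A\|\le e^{\|A\|}$, use $\|\Delta_i E\|\le\Delta_i|E|$, and pass to the limit. One small inaccuracy: the bound $\|e^A\|\le e^{\|A\|}$ is not actually listed among the appendix matrix-norm facts, though it is of course immediate from the exponential series and submultiplicativity.
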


\begin{proof}
Choose $(\tau,\xi)\in\mathcal{T}^b_a$. Then
$$\|P(\tau,\xi)\|\le \prod_{j=1}^m \exp(|f(\xi_j)| \|\Delta_j E\|)=\exp\left(\sum_{j=1}^m |f(\xi_j)| \|\Delta_j E\|\right).$$
Also, we have $\|\Delta_j E\|\le \var_{[t_{j-1},t_j]} E=|E|(t_j)-|E|(t_{j-1})=\Delta_j |E|$. Letting $\nu(\tau)\rightarrow 0$ we obtain the claim.
\end{proof}

\begin{prop}\index{multiplicative integral estimate}
\label{prop:minttaylorest}
Let $f\in\mathcal{M}^b_a[E]$, $E\in\BV([a,b]; M_n)$ and $|f|$ be Stieltjes integrable with respect to $|E|$. Then
$$
\mint^b_a \exp(f(t)\,dE(t))=I+\int_a^b f(t)\,dE(t)+R
$$
where $R$ is a matrix that satisfies
\begin{align}
\label{eqn:minttayloresterr}
\|R\|\le\sum_{\nu=2}^\infty \frac{1}{\nu!}\left(\int^b_a |f(t)|\,d|E|(t)\right)^\nu
\end{align}
If $\int_a^b |f| d|E|\le 1$, then there exists $0<C<1$ such that
$$\|R\|\le C\left(\int^b_a |f(t)|\,d|E|(t)\right)^2$$
\end{prop}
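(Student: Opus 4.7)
The plan is to fix a tagged partition $(\tau,\xi)\in\mathcal{T}^b_a$, write $A_j=f(\xi_j)\Delta_j E$, and expand each factor $\exp(A_j)$ as its power series. Distributing the product $P(\tau,\xi)=\prodr_{j=1}^m \exp(A_j)$ across the sums gives an absolutely convergent expansion over multi-indices $(\nu_1,\dots,\nu_m)\in\N_0^m$ of terms $\prodr_{j=1}^m A_j^{\nu_j}/\nu_j!$, with the left-to-right order preserved. The single multi-index with $\sum_j\nu_j=0$ contributes $I$; the $m$ multi-indices with $\sum_j\nu_j=1$ collectively contribute $\sum_{j=1}^m A_j=\sum_{j=1}^m f(\xi_j)\Delta_j E$; and all remaining multi-indices, with $\sum_j\nu_j\ge 2$, form the partition-level remainder $R(\tau,\xi)$.

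Because $\|A_j^{\nu_j}/\nu_j!\|\le \|A_j\|^{\nu_j}/\nu_j!$, applying the identical reorganization to the scalar product $\prod_{j=1}^m \exp(\|A_j\|)$ yields the bound
\[
\|R(\tau,\xi)\|\le \exp\!\left(\sum_{j=1}^m \|A_j\|\right)-1-\sum_{j=1}^m \|A_j\|.
\]
The sum $\sum_{j=1}^m\|A_j\|=\sum_{j=1}^m|f(\xi_j)|\,\|\Delta_j E\|$ is dominated by $\sum_{j=1}^m |f(\xi_j)|\,\Delta_j|E|$, which by hypothesis converges to $\int_a^b |f|\,d|E|$ as $\nu(\tau)\to 0$. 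The linear part $\sum_{j=1}^m f(\xi_j)\Delta_j E$ converges componentwise to the additive Riemann-Stieltjes integral $\int_a^b f\,dE$; existence reduces (via $\|\Delta_j E\|\le\Delta_j|E|$) to Stieltjes integrability of $f$ against the scalar BV function $|E|$, which follows from the hypothesis that $|f|$ is Stieltjes integrable against $|E|$. Passing to the limit $\nu(\tau)\to 0$ in the Cauchy-net sense gives the claimed decomposition together with the first estimate, since $e^x-1-x=\sum_{\nu\ge 2}x^\nu/\nu!$.

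For the refinement, set $M=\int_a^b|f|\,d|E|\le 1$ and factor $M^2$ from the tail:
\[
\sum_{\nu=2}^\infty \frac{M^\nu}{\nu!}=M^2\sum_{\nu=2}^\infty \frac{M^{\nu-2}}{\nu!}\le M^2\sum_{\nu=2}^\infty \frac{1}{\nu!}=M^2(e-2),
\]
so the constant $C=e-2$ lies in $(0,1)$ and does the job.

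The only step I expect to require care is justifying convergence of the first-order partition sums to the additive integral $\int_a^b f\,dE$ under the stated integrability hypothesis and passing the norm inequality through the net limit; once these are in place, everything else is direct bookkeeping on the termwise power-series bounds.
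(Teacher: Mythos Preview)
Your proposal is correct and follows essentially the same route as the paper: expand $P(\tau,\xi)=\prodr_j\exp(A_j)$ term by term, separate the degree-$0$, degree-$1$, and higher-degree contributions, bound the remainder by the scalar tail $e^x-1-x=\sum_{\nu\ge2}x^\nu/\nu!$ with $x=\sum_j|f(\xi_j)|\,\Delta_j|E|$, and pass to the net limit; the constant $C=e-2$ is exactly what the paper uses. The paper writes out the multinomial identity for each homogeneous term $T_\nu$ explicitly, whereas you invoke the scalar comparison more directly, but the arguments are the same in substance.
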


\begin{proof}
Let $(\tau,\xi)\in\mathcal{T}^b_a$. We expand the  product $P(\tau,\xi)$ using the exponential series:
$$P(\tau,\xi)=\prod_{j=1}^m \sum_{\nu=0}^\infty \frac{(f(\xi_j)\Delta_j E)^\nu}{\nu!}=I+\sum_{j=1}^m f(\xi_j)\Delta_j E+R$$
where the remainder term $R=R(\tau,\xi)$ is of the form $R=\sum_{\nu=2}^\infty T_\nu$ with the terms $T_\nu$ of order $\nu$ being
$$T_\nu=\sum_{\nu_1+\cdots+\nu_m=\nu}\prodr_{j=1}^m \frac{(f(\xi_j)\Delta_j E)^{\nu_j}}{\nu_j!}$$
We use the triangle inequality and $\|\Delta_j E\|\le \Delta_j |E|$ to estimate
\begin{align*}
\|T_\nu\|&\le \sum_{\nu_1+\cdots+\nu_m=\nu} \prod_{j=1}^m \frac{(|f(\xi_j)|\cdot \Delta_j |E|)^{\nu_j}}{\nu_j!}\\
&=\frac{1}{\nu!}\sum_{\nu_1+\cdots+\nu_m=\nu}{\nu\choose \nu_1,\cdots,\nu_m}\prod_{j=1}^m (|f(\xi_j)|\cdot\Delta_j |E|)^{\nu_j}\\
&=\frac{1}{\nu!}\left(\sum_{j=1}^m |f(\xi_j)| \cdot\Delta_j |E|\right)^\nu
\end{align*}
Letting $\nu(\tau)\rightarrow 0$ we obtain the first part of the claim.\\
If now $\int_a^b |f| d|E|\le 1$, then
$$\|R\|\le C\left(\int^b_a |f(t)|\,d|E|(t)\right)^2$$
holds with $C=\sum_{\nu=2}^\infty \frac{1}{\nu!}=e-2$.
\end{proof}

Lastly, we give a change of variables formula for multiplicative integrals. Note that we allow the variable transformation to have jump discontinuities. 

\begin{prop}[Change of variables\index{Change of variables}]
If $\varphi:[a,b]\rightarrow[\alpha,\beta]$ is a strictly increasing function with $\varphi(a)=\alpha$ and $\varphi(b)=\beta$, $f$ continuous on $[\alpha,\beta]$ and $E$ a continuous increasing mvf on $[a,b]$, then
$$
\mint^b_a \exp(f(\varphi(t))\,dE(t))=\mint^\beta_\alpha \exp(f(s)\,dE(\varphi^\dagger(s)))
$$
assuming that the the integrals exist.
\end{prop}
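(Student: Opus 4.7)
The plan is to match Riemann products partition-by-partition and then pass to a common limit. Let $\varphi^\dagger:[\alpha,\beta]\to[a,b]$ be the one-sided generalized inverse $\varphi^\dagger(s) = \inf\{t\in[a,b] : \varphi(t)\ge s\}$. Strict monotonicity of $\varphi$ together with the boundary conditions $\varphi(a)=\alpha$, $\varphi(b)=\beta$ make $\varphi^\dagger$ well-defined and non-decreasing. In fact $\varphi^\dagger$ is continuous: it becomes locally constant on each interval $[\varphi(t_0^-),\varphi(t_0^+)]$ produced by a jump of $\varphi$ at some $t_0$, but has no jumps of its own. Direct inspection gives $\varphi^\dagger(\varphi(t)) = t$ for all $t\in[a,b]$, so the mvf $g := E\circ\varphi^\dagger$ satisfies $g(\varphi(t)) = E(t)$.

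For a tagged partition $(\tau,\xi) = (\{t_i\},\{\xi_i\})\in\mathcal{T}_a^b$, put $s_i=\varphi(t_i)$ and $\eta_i=\varphi(\xi_i)$, so that $(\sigma,\eta) := (\{s_i\},\{\eta_i\})\in\mathcal{T}_\alpha^\beta$. Since $g(s_i)=E(t_i)$, the two Riemann products agree factor by factor:
\[
P(f\circ\varphi,E,\tau,\xi) \;=\; \prodr_{i=1}^m \exp\bigl(f(\eta_i)(g(s_i)-g(s_{i-1}))\bigr) \;=\; P(f,g,\sigma,\eta).
\]

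As $\nu(\tau)\to 0$ the left side converges to $\mint_a^b\exp((f\circ\varphi)\,dE)$ by definition, so it remains to show that the matched right side also tends to $\mint_\alpha^\beta\exp(f\,dg)$. The main obstacle is that $\nu(\sigma)$ does \emph{not} in general tend to zero: every jump of $\varphi$ of size $J>0$ sitting inside some $[t_{i-1},t_i]$ forces the corresponding $s$-interval to have length at least $J$. What rescues matters is that the matched increments remain well-controlled, since $\|g(s_i)-g(s_{i-1})\| = \|E(t_i)-E(t_{i-1})\|$ and the total variation of $g$ over $[s_{i-1},s_i]$ equals that of $E$ over $[t_{i-1},t_i]$; both are uniformly small in $i$ by uniform continuity of $E$ and of $|E|$ on $[a,b]$. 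Splitting $\mint_\alpha^\beta\exp(f\,dg) = \prodr_{i=1}^m \mint_{s_{i-1}}^{s_i}\exp(f\,dg)$ via Proposition \ref{prop:mintsep} and applying the Taylor estimate of Proposition \ref{prop:minttaylorest} both to each factor and to each matching $\exp(f(\eta_i)(g(s_i)-g(s_{i-1})))$, the factor-by-factor difference reduces to a quadratic remainder of size $O(\|E(t_i)-E(t_{i-1})\|^2)$ plus the oscillation integral $\int_{s_{i-1}}^{s_i}(f(\eta_i)-f(s))\,dg(s)$. Because $dg$ is the pushforward of $dE$ under $\varphi$, this last integral rewrites as $\int_{t_{i-1}}^{t_i}(f(\varphi(\xi_i))-f(\varphi(t)))\,dE(t)$, which is small on subintervals avoiding any $\varphi$-jump of size exceeding a chosen threshold $\delta$ (by uniform continuity of $f$) and is controlled crudely by $2\|f\|_\infty\cdot|E|([t_{i-1},t_i])$ on the finitely many subintervals that do contain such a jump. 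Summing, and using that the matched partial products are uniformly bounded, yields convergence of the matched Riemann products to $\mint_\alpha^\beta\exp(f\,dg)$, completing the proof.
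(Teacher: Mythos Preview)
Your route is considerably more elaborate than the paper's. The paper simply notes that applying $\varphi$ sends a tagged partition $(\tau,\xi)$ of $[a,b]$ to a tagged partition $(\varphi(\tau),\varphi(\xi))$ of $[\alpha,\beta]$, that this correspondence and the reverse one (via $\varphi^\dagger$) respect the refinement order, and that the matched Riemann products agree. Since the multiplicative integral is \emph{defined} as a net limit over the refinement order---the notation $\lim_{\nu(\tau)\to 0}$ is explicitly declared to be shorthand for this---and both integrals are assumed to exist, equality of the two limits follows directly, without any Taylor expansion or telescoping. Your worry that $\nu(\sigma)$ does not shrink is thus sidestepped at the level of definitions.

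If one nevertheless carries out your direct estimation, there is a gap in the last step. You claim that on a subinterval $[t_{i-1},t_i]$ containing no $\varphi$-jump exceeding a threshold $\delta$, the quantity $f(\varphi(\xi_i))-f(\varphi(t))$ is uniformly small by uniform continuity of $f$. That is not justified: absence of jumps larger than $\delta$ does not force $s_i-s_{i-1}=\varphi(t_i)-\varphi(t_{i-1})$ below $\delta$, since a jump of size $\delta/2$ (or several of them) may still sit inside $[t_{i-1},t_i]$; hence $|\varphi(\xi_i)-\varphi(t)|$ need not be small. The fix is to split by the \emph{image} interval lengths instead: call $[t_{i-1},t_i]$ short if $s_i-s_{i-1}<\delta$ and long otherwise. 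Short intervals carry oscillation of $f\circ\varphi$ at most $\omega_f(\delta)$, contributing at most $\omega_f(\delta)\cdot\var_{[a,b]}E$ in total. There are at most $(\beta-\alpha)/\delta$ long intervals, each contributing at most $2\|f\|_\infty\cdot(|E|(t_i)-|E|(t_{i-1}))$, and continuity of $E$ forces $\max_i(|E|(t_i)-|E|(t_{i-1}))\to 0$ as $\nu(\tau)\to 0$. Choosing $\delta$ first and then $\nu(\tau)$ completes the argument.
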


Here, $\varphi^\dagger$ is the \defin{generalized inverse}\index{generalized inverse} of $\varphi$ given by $\varphi^\dagger(s)=\inf\{t\in [a,b]\,:\,\varphi(t)\ge s\}$, which gives a natural notion of inverse for a general increasing function. For strictly increasing continuous functions we have $\varphi^\dagger=\varphi^{-1}$. Jump discontinuities of $\varphi$ translate into intervals of constancies of $\varphi^\dagger$ and intervals of constancy of $\varphi$ become jump discontinuities of $\varphi^\dagger$. The generalized inverse is normalized in the sense that it is left-continuous.

\begin{proof}
Set $g=f\circ\varphi$ and $F=E\circ\varphi^\dagger$. Given a tagged partition $(\tau, \xi)\in \mathcal{T}_{a}^{b}$ on $[a,b]$, we can apply $\varphi$ to produce a partition on $[a,b]$ given by $(\tau^\prime, \xi^\prime)=(\varphi(\tau), \varphi(\xi))\in \mathcal{T}_\alpha^\beta$. This mapping and its inverse respect the partial order on the set of tagged partitions. That is, refinements of $(\tau,\xi)$ map to refinements of $(\tau^\prime,\xi^\prime)$ and vice versa. Now it only remains to notice that
$$P(f,F,\tau^\prime,\xi^\prime)=\prod_{i=1}^m \exp(f(\xi_i^\prime)\Delta^{\tau^\prime}_i F)
=\prod_{i=1}^m \exp(g(\xi_i))\Delta^{\tau}_i E)=P(g, E, \tau,\xi)$$ 
and similarly the other way around.
\end{proof}

As opposed to the additive case, this formula does not extend to increasing functions which have intervals of constancy. This corresponds to the difficulties in computing the multiplicative integral of constants.

\subsection{The multiplicative Lebesgue integral}

The additive Lebesgue integral can be written as a Riemann-Stieltjes integral. We use this observation to define a multiplicative version of the Lebesgue integral and show that it behaves as expected. In particular, there is a Lebesgue differentiation theorem. Let us first recall some definitions. We denote the Lebesgue measure on the real line by $\lambda$.

We say that a mvf $A$ on $[a,b]$ is \defin{Lebesgue integrable}\index{Lebesgue integrable}, if
$$\int_a^b \|A(t)\|d\lambda(t)<\infty$$
in that case we write $A\in L^1([a,b]; M_n)$. As usual, we identify two such functions if they differ only on a set of measure zero. That is, $L^1$ functions are strictly speaking equivalence classes of functions.

A mvf $E$ on $[a,b]$ is \defin{absolutely continuous}\index{absolutely continuous} if for all $\epsilon>0$ there exists $\delta>0$ such that for all partitions $\tau=\{a=t_0<\dots<t_m=b\}$ with $\nu(\tau)<\delta$ we have
$$\var_{[a,b]}^\tau=\sum_{k=1}^m \|\Delta_k E\|<\epsilon$$

As in the scalar case, absolutely continuous mvfs are differentiable almost everywhere with integrable derivative. They are also by definition of bounded variation.

\begin{definition}[Multiplicative Lebesgue integral]
Let $A\in L^1([a,b]; M_n)$. Define
$$E(t)=\int^t_a A(s)\,d\lambda(s)$$
for $t\in[a,b]$. Then $E$ is absolutely continuous. We define the expression
$$\mint^b_a \exp(A(t)\,dt)=\mint^b_a \exp(dE(t))$$
to be the \defin{multiplicative Lebesgue integral}\index{multiplicative Lebesgue integral} of $A$. It is well-defined and also exists by Proposition \ref{prop:mintexist2}.
\end{definition}

As for ordinary Riemann-Stieltjes integrals, multiplicative integrals with an absolutely continuous integrator can be rewritten as multiplicative Lebesgue integrals.
\begin{prop} If $E$ is an absolutely continuous, increasing mvf and $f$ is a continuous scalar function, then
$$\mint_a^b \exp(f(t)dE(t))=\mint_a^b \exp(f(t)E^\prime(t)dt)$$
\end{prop}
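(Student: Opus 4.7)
The plan is to exhibit both sides as limits of Riemann products over the same tagged partitions of $[a,b]$ and to show that corresponding products differ by something that vanishes as the mesh shrinks. First I would set $G(t) = \int_a^t f(s) E'(s)\, ds$, which is absolutely continuous since $f$ is bounded on $[a,b]$ and $E' \in L^1([a,b]; M_n)$. By definition of the multiplicative Lebesgue integral the right-hand side then equals $\mint_a^b \exp(dG(t))$, and both multiplicative integrals exist by Proposition \ref{prop:mintexist2}, as $E$ and $G$ are continuous and of bounded variation.

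Next, for a tagged partition $(\tau,\xi) \in \mathcal{T}_a^b$, I would compare the two Riemann products
$$P_L(\tau,\xi) = \prodr_{k=1}^m \exp(f(\xi_k)\Delta_k E), \qquad P_R(\tau) = \prodr_{k=1}^m \exp(\Delta_k G).$$
The key observation, using absolute continuity of $E$, is that
$$f(\xi_k)\Delta_k E - \Delta_k G = \int_{t_{k-1}}^{t_k} \bigl(f(\xi_k) - f(s)\bigr) E'(s)\, ds,$$
whose norm is bounded by $\omega_f(\nu(\tau)) \int_{t_{k-1}}^{t_k} \|E'(s)\|\, ds$, where $\omega_f$ is the modulus of continuity of $f$ on $[a,b]$. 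By uniform continuity, $\omega_f(\nu(\tau)) \to 0$ as $\nu(\tau) \to 0$.

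To pass from the individual exponents to the products, I would use the telescoping identity
$$\prod_{k=1}^m A_k - \prod_{k=1}^m B_k = \sum_{k=1}^m (A_1 \cdots A_{k-1})(A_k - B_k)(B_{k+1} \cdots B_m)$$
applied with $A_k = \exp(f(\xi_k)\Delta_k E)$ and $B_k = \exp(\Delta_k G)$, together with the elementary estimate $\|e^X - e^Y\| \le e^{\max(\|X\|,\|Y\|)}\|X-Y\|$ (which follows from $e^X - e^Y = \int_0^1 e^{tX}(X-Y)e^{(1-t)Y}\,dt$) and the uniform bound on partial products from Proposition \ref{prop:mintest}. This yields
$$\|P_L(\tau,\xi) - P_R(\tau)\| \le C\, \omega_f(\nu(\tau)) \int_a^b \|E'(s)\|\, ds,$$
with $C$ depending only on $\sup|f|$ and $|E|(b)$. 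Since the right-hand side tends to $0$, and $P_L$, $P_R$ converge along $\mathcal{T}_a^b$ to the two sides of the claimed identity, the limits coincide.

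The only real obstacle is the bookkeeping needed to ensure that the constant $C$ is uniform in the partition; this reduces to applying Proposition \ref{prop:mintest} to the relevant partial products, together with the elementary bound $\sum_j \|\Delta_j E\| \le |E|(b)$ and its analogue for $G$.
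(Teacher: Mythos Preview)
Your argument is correct and takes a genuinely different route from the paper. The paper's proof is shorter: given a partition $\tau$, it invokes the mean value theorem for integrals to pick tags $\xi_j\in[t_{j-1},t_j]$ satisfying $f(\xi_j)\int_{t_{j-1}}^{t_j} E'(s)\,d\lambda(s)=\int_{t_{j-1}}^{t_j} f(s)E'(s)\,d\lambda(s)$, so that $f(\xi_j)\Delta_j E=\Delta_j F$ holds \emph{exactly} and the two Riemann products coincide term by term; letting $\nu(\tau)\to 0$ then finishes immediately, with no error to track.

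Your approach instead fixes arbitrary tags and controls the discrepancy $f(\xi_k)\Delta_k E-\Delta_k G$ via uniform continuity of $f$, then propagates this through the products with the telescoping identity and the Lipschitz bound for the exponential. This costs more bookkeeping but buys robustness: the paper's mean-value step asks for a \emph{single} scalar $\xi_j$ making a matrix identity hold, and for a genuinely matrix-valued weight $E'$ such a $\xi_j$ need not exist (different entries of $E'$ can force different intermediate points). Your estimate-based argument sidesteps this delicacy entirely. The integral representation $e^X-e^Y=\int_0^1 e^{tX}(X-Y)e^{(1-t)Y}\,dt$ you use is correct and gives exactly the bound you need.
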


\begin{proof}
As for scalar functions, $E$ is differentiable almost everywhere and the derivative $E^\prime$ is in $L^1([a,b]; M_n)$. Setting $F(t)=\int_a^t f(s)E^\prime(s)d\lambda(s)$,
the claim is equivalent to
\begin{align}
\label{eqn:lebesguepf1}
\mint_a^b\exp(dF(t))=\mint_a^b \exp(f(t)dE(t))
\end{align}
Pick a partition $\tau=\{a=t_0<\dots<t_m=b\}$. Apply the mean value theorem of integration to choose $\xi_j\in[t_{j-1},t_j]$ such that $f(\xi_j)\int_{t_{j-1}}^{t_j} E^\prime(s)d\lambda(s)=\int_{t_{j-1}}^{t_j} f(s)E^\prime(s)d\lambda(s)$. This is possible because $f$ is continuous. Then

\begin{align*}
\prod_{k=1}^m \exp(\Delta_j F)&=\prod_{k=1}^m \exp\left(\int_{t_{j-1}}^{t_j} f(s)E^\prime(s)d\lambda(s)\right)\\
&=\prod_{k=1}^m \exp\left(f(\xi_j)\int_{t_{j-1}}^{t_j}E^\prime(s)d\lambda(s)\right)\\
&=\prod_{k=1}^m \exp(f(\xi_j)\Delta_j E)
\end{align*}
In the limit $\nu(\tau)\rightarrow 0$, this implies \eqref{eqn:lebesguepf1}.
\end{proof}

Multiplicative integrals can also be viewed as solutions of certain ordinary differential equations\index{ordinary differential equation}, as we will see from the next proposition, which we can also view as a multiplicative version of the classical Lebesgue differentiation theorem\index{Lebesgue differentiation theorem}. 

\begin{prop}
Let $A\in L^1([a,b]; M_n)$. Then the function
$$F(x)=\mint_a^x \exp(A(t)dt)$$
is differentiable almost everywhere in $(a,b)$ and
\begin{align}
\label{mintode}
\frac{dF}{dx}(x)=F(x)A(x)
\end{align}
for almost every $x\in (a,b)$.
\end{prop}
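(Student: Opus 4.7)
The plan is to derive the formula by exploiting Proposition \ref{prop:mintsep} to factor the difference quotient and Proposition \ref{prop:minttaylorest} to extract a first-order expansion, then pass to the limit via the classical Lebesgue differentiation theorem.

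First, set $E(t)=\int_a^t A(s)\,d\lambda(s)$, so that $F(x)=\mint_a^x\exp(dE(t))$. For $h\ne 0$ sufficiently small that $[x,x+h]\subset(a,b)$ (take $h>0$; the case $h<0$ is symmetric), Proposition \ref{prop:mintsep} gives
$$F(x+h)=F(x)\cdot\mint_x^{x+h}\exp(dE(t)),$$
so the difference quotient factors as
$$\frac{F(x+h)-F(x)}{h}=F(x)\cdot\frac{1}{h}\Bigl(\mint_x^{x+h}\exp(dE(t))-I\Bigr).$$

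Second, I would apply Proposition \ref{prop:minttaylorest} with $f\equiv 1$ on $[x,x+h]$. Since $E$ is absolutely continuous, $E\in\BV([x,x+h];M_n)$ and
$$|E|(x+h)-|E|(x)=\int_x^{x+h}\|A(t)\|\,d\lambda(t),$$
which is $\le 1$ once $|h|$ is small enough (as $\|A\|\in L^1$). The proposition then yields
$$\mint_x^{x+h}\exp(dE(t))=I+\int_x^{x+h}A(t)\,d\lambda(t)+R(h),$$
with $\|R(h)\|\le C\bigl(\int_x^{x+h}\|A(t)\|\,d\lambda(t)\bigr)^2$ for some absolute constant $C$. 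Dividing by $h$ and invoking the classical Lebesgue differentiation theorem entrywise on $A$, one has
$$\lim_{h\to 0}\frac{1}{h}\int_x^{x+h}A(t)\,d\lambda(t)=A(x)$$
for almost every $x\in(a,b)$, and at every Lebesgue point of $\|A\|$ the remainder obeys
$$\left\|\frac{R(h)}{h}\right\|\le C\,|h|\,\Bigl(\frac{1}{|h|}\int_x^{x+h}\|A(t)\|\,d\lambda(t)\Bigr)^2\longrightarrow 0.$$
Since $\|F(x)\|$ is bounded by $\exp(\|A\|_{L^1})$ thanks to Proposition \ref{prop:mintest}, multiplying on the left by the fixed matrix $F(x)$ preserves these limits and gives $F'(x)=F(x)A(x)$ at almost every $x$.

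The main obstacle is really just a matter of bookkeeping: one must check that the quadratic bound on $R(h)$ from Proposition \ref{prop:minttaylorest} is strong enough to kill the $1/h$ factor, which it is because $\tfrac{1}{|h|}\int_x^{x+h}\|A\|\,d\lambda$ stays bounded at Lebesgue points while the prefactor $|h|$ vanishes. Everything else—matrix-valued Lebesgue differentiation, absolute continuity of $E$, the identity $|E|'=\|A\|$ a.e.—is standard and plays a supporting role.
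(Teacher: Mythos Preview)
Your argument is correct and follows essentially the same route as the paper: factor via Proposition~\ref{prop:mintsep}, expand the short integral with Proposition~\ref{prop:minttaylorest}, and invoke the scalar Lebesgue differentiation theorem for both the linear term and the quadratic remainder. The only differences are cosmetic---you spell out the integrator $E$ and the boundedness of $F(x)$ explicitly, whereas the paper leaves these implicit.
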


\begin{remark}
The uniqueness and existence theory for ordinary differential equations shows that we could have also \emph{defined} the multiplicative integral $\mint_a^x \exp(A(t)dt)$ as the unique solution of the Cauchy problem\index{Cauchy problem} given by \eqref{mintode} and the initial value condition $F(a)=I_n$.
\end{remark}

\begin{proof}
By Propositions \ref{prop:mintsep} and \ref{prop:minttaylorest} we have for $x\in(a,b)$ and $h>0$ small enough:
$$F(x+h)=F(x)\mint_x^{x+h} \exp(A(t)dt)=F(x)(I+\int_x^{x+h} A(t)dt+R)$$
where $\|R\|\le C\left(\int_x^{x+h} \|A(t)\|dt\right)^2$. Therefore
$$\frac{1}{h}(F(x+h)-F(x))=F(x)\left(\frac{1}{h}\int_x^{x+h} A(t)dt+\frac{R}{h}\right)$$
A similar calculation works for $h<0$. By the Lebesgue differentiation theorem,
$$\lim_{h\rightarrow 0}\frac{1}{h}\int_x^{x+h} A(t)dt=A(x)$$
for almost every $x\in(a,b)$. Also,
$$\frac{\|R\|}{h}\le Ch\left(\frac{1}{h}\int_x^{x+h}\|A(t)\|dt\right)^2\longrightarrow 0$$
as $h\rightarrow 0$ for almost every $x\in (a,b)$. The claim follows.
\end{proof}

\subsection{Helly's theorems}
\label{hellythm}

Our theory of multiplicative integrals is still missing a convergence theorem. The aim of this section is to fill in this gap. The convergence theorem we will obtain is an analogue of Helly's convergence theorem for scalar Riemann-Stieltjes integrals (see Theorem \ref{thm:hellyconvsc} in the appendix). First, we prove two auxiliary statements.

A useful trick when estimating differences of products is the following.
\begin{lemma}[Telescoping identity]
\label{lemma:telescoping}\index{telescoping identity}
For matrices $Q_1,\dots,Q_m,P_1,\dots,P_m$ we have\begin{align}
\label{eqn:telescoping}
\prod^m_{\nu=1} P_\nu - \prod^m_{\nu=1} Q_\nu
=\sum_{l=1}^m \left(\prod^{l-1}_{\nu=1} P_\nu\right) (P_l-Q_l) \left(\prod^m_{\nu=l+1} Q_\nu\right)
\end{align}
\end{lemma}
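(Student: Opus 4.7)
The plan is to prove the identity by direct expansion and telescoping, which is the cleanest approach for an identity of this purely algebraic form. An induction on $m$ would also work, but expansion makes the telescoping visible in one shot.

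First I would rewrite each summand on the right-hand side by distributing $(P_l - Q_l)$, so that the $l$-th term splits as
\begin{align*}
\Bigl(\prod_{\nu=1}^{l-1} P_\nu\Bigr)(P_l - Q_l)\Bigl(\prod_{\nu=l+1}^{m} Q_\nu\Bigr)
= \Bigl(\prod_{\nu=1}^{l} P_\nu\Bigr)\Bigl(\prod_{\nu=l+1}^{m} Q_\nu\Bigr)
- \Bigl(\prod_{\nu=1}^{l-1} P_\nu\Bigr)\Bigl(\prod_{\nu=l}^{m} Q_\nu\Bigr).
\end{align*}
If I denote $S_l = \bigl(\prod_{\nu=1}^{l} P_\nu\bigr)\bigl(\prod_{\nu=l+1}^{m} Q_\nu\bigr)$ for $l = 0, 1, \dots, m$, with the usual convention that an empty product equals the identity matrix, then the above reads $S_l - S_{l-1}$.

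Summing over $l = 1, \dots, m$ then telescopes to $S_m - S_0 = \prod_{\nu=1}^m P_\nu - \prod_{\nu=1}^m Q_\nu$, which is exactly the left-hand side of \eqref{eqn:telescoping}. Care must be taken with the boundary conventions: for $l=1$ the factor $\prod_{\nu=1}^{0} P_\nu$ is the empty product $I$, and similarly for $l=m$ the factor $\prod_{\nu=m+1}^{m} Q_\nu$ is $I$, which is consistent with $S_0$ and $S_m$ as defined above.

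There is no real obstacle here; the only thing to watch is that matrix multiplication is noncommutative, so one must preserve the left-to-right ordering of every factor throughout — in particular, the term $(P_l - Q_l)$ must stay sandwiched between the $P$-prefix and the $Q$-suffix, and one cannot combine or reorder products as in the scalar case. Provided one respects the order, the computation is a one-line telescoping argument.
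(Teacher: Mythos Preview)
Your proof is correct. You expand each summand as a difference $S_l - S_{l-1}$ and telescope directly; the paper instead proves the identity by induction on $m$, using the algebraic splitting $AB - CD = (A-C)D + A(B-D)$ with $A = \prod_{\nu=1}^{m-1} P_\nu$, $B = P_m$, $C = \prod_{\nu=1}^{m-1} Q_\nu$, $D = Q_m$ and applying the induction hypothesis to $A - C$. Both arguments are equally short and elementary; your direct expansion has the advantage of making the name ``telescoping'' visually transparent, while the paper's induction avoids introducing the auxiliary quantities $S_l$ and the empty-product conventions you had to check at the boundary.
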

\begin{proof} We do an induction on $m$. For $m=1$ there is nothing to show. Setting $A=\prod_{\nu=1}^{m-1} P_\nu$, $B=P_{m}$, $C=\prod_{\nu=1}^{m-1} Q_\nu$ and $D=Q_{m}$ we have
$$\prod_{\nu=1}^{m} P_\nu-\prod_{\nu=1}^{m} Q_\nu=AB-CD=(A-C)D+A(B-D)$$
By the induction hypothesis, the right hand side is equal to
$$\sum_{l=1}^{m} \left(\prod^{l-1}_{\nu=1} P_\nu\right) (P_l-Q_l) \left(\prod^{m}_{\nu=l+1} Q_\nu\right)$$
\end{proof}

The following is a simple estimate for the additive matrix-valued Riemann-Stieltjes integral.

\begin{lemma}\index{Stieltjes integral estimate}
\label{lemma:addstieltjesest}
If $E$ is an increasing mvf on $[a,b]$ and $f$ a bounded scalar function which is Stieltjes integrable with respect to $E$, then
$$\left\|\int_a^b f\,dE\right\|\le C \|E(b)-E(a)\|$$
for some constant $C>0$.
\end{lemma}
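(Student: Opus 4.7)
The plan is to pass the matrix norm inside a Riemann--Stieltjes sum, exploit the fact that each increment $\Delta_j E = E(t_j)-E(t_{j-1})$ is positive semidefinite (since $E$ is increasing), and then use the elementary estimate $\|A\|\le \tr A$ for PSD matrices $A$ together with the telescoping of the trace.

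More concretely, I would fix a bound $\|f\|_\infty \le M$ (available since $f$ is bounded) and pick a tagged partition $(\tau,\xi)\in\mathcal{T}_a^b$. For the associated additive Riemann--Stieltjes sum
\[
S(\tau,\xi)=\sum_{j=1}^m f(\xi_j)\Delta_j E,
\]
the triangle inequality gives $\|S(\tau,\xi)\|\le M\sum_{j=1}^m\|\Delta_j E\|$. Because $E$ is increasing, each $\Delta_j E\ge 0$, hence its matrix norm coincides with its largest eigenvalue, which is bounded above by its trace. Thus
\[
\sum_{j=1}^m\|\Delta_j E\|\le \sum_{j=1}^m \tr(\Delta_j E)=\tr\bigl(E(b)-E(a)\bigr),
\]
where I used linearity of the trace and the telescoping $\sum_j\Delta_j E=E(b)-E(a)$.

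Finally, $E(b)-E(a)$ is itself PSD, and on $M_n$ one has $\tr P\le n\|P\|$ for any PSD $P$ (again because all eigenvalues are nonnegative and the largest equals the norm). Combining these estimates,
\[
\|S(\tau,\xi)\|\le nM\,\|E(b)-E(a)\|.
\]
Since this bound is independent of $(\tau,\xi)$, passing to the limit $\nu(\tau)\to 0$ yields the claim with $C=nM=n\|f\|_\infty$.

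I do not expect any real obstacle here; the only thing to be slightly careful about is that we are using the matrix norm rather than the trace norm, which is exactly why the dimensional factor $n$ appears in $C$. The proof is essentially a one-line consequence of the PSD inequality $\|P\|\le\tr P\le n\|P\|$ combined with telescoping.
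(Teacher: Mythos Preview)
Your proof is correct. The paper takes a different route: instead of going through the trace, it fixes a unit vector $v$, observes that $t\mapsto v^*E(t)v$ is an increasing \emph{scalar} function, and applies the scalar Stieltjes estimate to obtain
\[
\Bigl|v^*\Bigl(\int_a^b f\,dE\Bigr)v\Bigr|=\Bigl|\int_a^b f\,d(v^*Ev)\Bigr|\le \|f\|_\infty\,v^*(E(b)-E(a))v\le \|f\|_\infty\,\|E(b)-E(a)\|,
\]
then takes the supremum over $\|v\|=1$. This quadratic-form reduction yields a dimension-independent constant $C=\|f\|_\infty$ (at least when $f$ is real, so that the integral is Hermitian and the numerical radius equals the norm), whereas your trace argument picks up the factor $n$. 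On the other hand, your approach is entirely insensitive to whether $f$ is real or complex and avoids the numerical-radius-versus-norm issue altogether. Since the lemma only asserts the existence of \emph{some} constant, both arguments are perfectly adequate; the paper's is sharper, yours is more robust.
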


\begin{proof}
For $n=1$ the statement is true. Choose $C>0$ such that $|f(t)|\le C$ for $t\in[a,b]$. Let $v\in\C^n$. Since $v^*Ev$ is an increasing scalar function, 
\begin{align*}
\left|v^*\left(\int_a^b f\,dE\right)v\right| &=\left|\int_a^b f(t) d(v^*E(t)v)\right|\le \int_a^b |f(t)|d(v^*E(t)v)\\
&\le C (v^* (E(b)-E(a))v)
\end{align*}
Taking the supremum over all $v$ with $\|v\|=1$ we obtain the claim.
\end{proof}

\begin{thm}[Helly's convergence theorem for multiplicative integrals]\index{Helly's convergence theorem}\index{Helly's second theorem}
\label{thm:hellyconvmvf}
Let $(E_k)_k$ be a sequence of increasing and uniformly Lipschitz continuous mvfs on $[a,b]$ which converge pointwise to the function $E$ on $[a,b]$. Suppose also that $(f_k)_k$ is a sequence of uniformly bounded Riemann integrable scalar functions on $[a,b]$ which converges pointwise to the Riemann integrable function $f$. Then
\begin{align*}
\lim_{k\rightarrow\infty} \mint^b_a \exp(f_k\,dE_k)=\mint^b_a \exp(f\,dE)
\end{align*}
\end{thm}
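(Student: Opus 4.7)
The plan is to reduce convergence of multiplicative integrals to that of additive Riemann--Stieltjes integrals by expanding the multiplicative integral on each piece of a fine partition via the first-order Taylor expansion (Proposition \ref{prop:minttaylorest}), with error uniform in $k$, and then invoking the scalar Helly convergence theorem (Theorem \ref{thm:hellyconvsc}) entrywise on the linear terms.

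Let $L$ be a common Lipschitz constant for the $E_k$ and $M$ a uniform bound for $\|f_k\|_\infty$; these bounds pass to the pointwise limits $E$ and $f$. By Proposition \ref{prop:mintexist1} all multiplicative integrals in sight exist. Fix a partition $\tau=\{a=t_0<\cdots<t_m=b\}$ with $\nu(\tau)$ small enough that $ML\,\nu(\tau)\le 1$. Using Proposition \ref{prop:mintsep} to split the integral along $\tau$ and Proposition \ref{prop:minttaylorest} on each subinterval, one writes
\begin{align*}
\mint_a^b \exp(f_k\,dE_k) = \prodr_{j=1}^m (I + A_{k,j} + R_{k,j}), \qquad A_{k,j}:=\int_{t_{j-1}}^{t_j} f_k\,dE_k,
\end{align*}
with $\|R_{k,j}\|\le (e-2)(ML\,\Delta_j\tau)^2$, and analogously for $(f,E)$ with terms $A_j, R_j$. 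Crucially, this quadratic remainder bound is \emph{uniform in $k$}.

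Next, applying the telescoping identity (Lemma \ref{lemma:telescoping}) to compare the above product with $\prodr_j(I+A_{k,j})$ and bounding the partial products uniformly by $\exp(ML(b-a))\cdot(1+O(\nu(\tau)))$, one gets
\begin{align*}
\left\|\mint_a^b \exp(f_k\,dE_k)-\prodr_{j=1}^m (I+A_{k,j})\right\|\le K\sum_{j=1}^m \|R_{k,j}\|\le K(e-2)(ML)^2(b-a)\,\nu(\tau),
\end{align*}
with $K$ independent of $k$, and the identical estimate for $(f,E)$. For this fixed $\tau$ I would then show $A_{k,j}\to A_j$ entrywise as $k\to\infty$: this is the scalar Helly theorem (Theorem \ref{thm:hellyconvsc}) applied to each entry of the $E_k$, which converge pointwise and have uniform total variation $L(b-a)$, together with the hypothesis on $f_k$; alternatively, since $E_k$ is Lipschitz one can rewrite $dE_k=E_k'\,d\lambda$ with $\|E_k'\|\le L$ a.e.\ and invoke dominated convergence. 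Continuity of matrix multiplication and the fact that the number of factors $m$ is fixed then yield $\prodr_j(I+A_{k,j})\to\prodr_j(I+A_j)$.

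Finally, given $\epsilon>0$, first choose $\tau$ so that both approximation errors from the Taylor step are $<\epsilon/3$, then choose $k_0$ so that $\|\prodr_j(I+A_{k,j})-\prodr_j(I+A_j)\|<\epsilon/3$ for $k\ge k_0$; the triangle inequality gives the claim. The main obstacle is the uniform-in-$k$ Taylor approximation in the third paragraph: this is exactly where uniform Lipschitz continuity of the $E_k$ (stronger than mere uniform bounded variation) and uniform boundedness of the $f_k$ are indispensable, since these are the hypotheses that make the constant in Proposition \ref{prop:minttaylorest} independent of~$k$.
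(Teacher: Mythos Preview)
Your overall architecture---split along a partition, apply Proposition~\ref{prop:minttaylorest} on each piece with a remainder bound uniform in $k$, then telescope---is exactly the route taken in the paper. The gap is in the step ``$A_{k,j}\to A_j$'': neither of the two justifications you offer actually works.

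The scalar Helly theorem you cite (Theorem~\ref{thm:hellyconvsc}) requires the integrand $\varphi$ to be \emph{continuous}, whereas here $f$ and $f_k$ are only Riemann integrable. And dominated convergence fails because pointwise convergence of uniformly Lipschitz functions does \emph{not} imply a.e.\ convergence of their derivatives: for a scalar example, let $E_k$ be piecewise linear on $[0,1]$ with $E_k'=0$ on each $[j/k,(2j+1)/(2k)]$ and $E_k'=2$ on each $[(2j+1)/(2k),(j+1)/k]$. Then $E_k\to\mathrm{id}$ uniformly with common Lipschitz constant $2$, but $E_k'$ does not converge anywhere. So you cannot conclude $f_kE_k'\to fE'$ a.e.

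The paper circumvents this by not trying to prove $A_{k,j}\to A_j$ for a fixed partition. Instead it decouples the two limits: writing the difference as $d_1+d_2$ with $d_1$ comparing $\mint e^{f\,dE}$ to $\mint e^{f\,dE_k}$ and $d_2$ comparing $\mint e^{f\,dE_k}$ to $\mint e^{f_k\,dE_k}$. For $d_1$ one replaces each $\int_{I_i} f\,dE$ by the single term $f(\xi_i)\Delta_i E$ at the cost of $(\osc_{I_i}f)\|\Delta_i E\|$; summing oscillations over a fine partition is small by Riemann integrability (Darboux), and the remaining comparison $\|\Delta_i E-\Delta_i E_k\|$ involves only finitely many pointwise values. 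For $d_2$ the Lipschitz bound gives $\int|f-f_k|\,d\lambda$, handled by Egorov. Your argument can be repaired along these lines (or, alternatively, by observing that $E_k'\rightharpoonup E'$ weak-$*$ in $L^\infty$, which does follow from pointwise convergence of the $E_k$ and suffices since $f\in L^1$), but as written the convergence of the linear terms is not established.
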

The prerequisites in this theorem are of course not the mildest possible to arrive at the desired conclusion, but they allow a relatively easy proof and are sufficient for our applications.

\begin{proof}
Let us assume without loss of generality that $[a,b]=[0,1]$ and that the uniform Lipschitz constant for all the $E_k$ is $1$. Then, since $\|E_k(t)-E_k(s)\|\le |t-s|$ for all $k$, also the pointwise limit $E$ is Lipschitz continuous with Lipschitz constant $1$. In particular $f\in\mathcal{M}^1_0[E]$. Choose a constant $K>0$ such that $|f_k(t)|\le K$ and $|f(t)|\le K$ for all $t$ and $k$. Now we begin by estimating
$$\left\| \mint_0^1 e^{f\,dE}-\mint_0^1 e^{f_k\,dE_k}\right\|\le \left\|\mint_0^1 e^{f\,dE}- \mint_0^1 e^{f\,dE_k}\right\|+\left\|\mint_0^1 e^{f\,dE_k}-\mint_0^1 e^{f_k\,dE_k} \right\|$$
The differences on the right hand side we name $d_1$ and $d_2$, respectively. Let us first estimate $d_1$. By Proposition \ref{prop:mintest} we have

\begin{align}
\label{eqn:hellypf0}
\left\|\mint_a^b \exp(f \,dE_k)\right\|\le \exp\left(\int_a^b |f(t)| dt\right)\le M_0
\end{align}

for every $0\le a\le b\le 1$ and $k\in\N\cup\{\infty\}$, where we set $E_\infty=E$ and $M_0>0$ is a constant, not depending on $a,b$. Now let us choose a partition $\tau$ of $[0,1]$ by setting $t_i=\frac{i}{m}$ for $i=0,\dots,m$. Also set $I_i=[t_{i-1},t_i]$ for $i=1,\dots,m$.

We apply the telescoping identity \eqref{eqn:telescoping}\index{telescoping identity} and the previous estimate \eqref{eqn:hellypf0} to see
\begin{align}
\label{eqn:hellypf1}
d_1=\left\|\prod_{i=1}^m\mint_{t_{i-1}}^{t_i} e^{f\,dE}-\prod_{i=1}^m\mint_{t_{i-1}}^{t_i} e^{f\,dE_k}\right\|
\le M\sum_{i=1}^m \left\|\mint_{t_{i-1}}^{t_i} e^{f\,dE} -\mint_{t_{i-1}}^{t_i} e^{f\,dE_k}\right\|
\end{align}
where $M=M_0^2$.

For large enough $m$ we have $\int_{t_{i-1}}^{t_i} |f(t)| dt\le 1$. Then Proposition \ref{prop:minttaylorest} applied to the multiplicative integrals on the right hand side of \eqref{eqn:hellypf1} gives
\begin{align}
\label{eqn:hellypf2}
d_1 &\le M\sum_{i=1}^m \left\|\int_{t_{i-1}}^{t_i} f dE - \int_{t_{i-1}}^{t_i} f dE_k\right\|+2C\left(\int_{t_{i-1}}^{t_i} |f(t)| dt\right)^2
\end{align}
To see this, note that the Lipschitz condition on $E_k$ implies $|E_k|(t)-|E_k|(s)\le t-s$ for $t\ge s$ and $k\in\N\cup\{\infty\}$.

We estimate further

\begin{align}
\left\|\int_{t_{i-1}}^{t_i} f dE - \int_{t_{i-1}}^{t_i} f dE_k\right\| &\le
\left\|\int_{t_{i-1}}^{t_i} f dE-f(i/m)\Delta_i E\right\|+
\left\|f(i/m)\Delta_i E - f(i/m)\Delta_i E_k\right\|\nonumber\\
\label{eqn:hellypf3}
&+\left\|f(i/m)\Delta_i E_k-\int_{t_{i-1}}^{t_i} f dE_k\right\|
\end{align}

Using Lemma \ref{lemma:addstieltjesest}, we get
\begin{align}
\label{eqn:hellypf4}
\left\|\int_{t_{i-1}}^{t_i} f dE-f(i/m)\Delta_i E\right\|&=\left\|\int_{t_{i-1}}^{t_i}(f(t)-f(i/m))dE(t)\right\|\\
&\le (\osc_{I_i} f)\|\Delta_i E\|\le \frac{\osc_{I_i} f}{m}
\end{align}
By the same argument also
\begin{align}
\label{eqn:hellypf5}
\left\|f(i/m)\Delta_i E_k-\int_{t_{i-1}}^{t_i} f dE_k\right\|\le \frac{\osc_{I_i} f}{m}
\end{align}
Combining \eqref{eqn:hellypf3}, \eqref{eqn:hellypf4} and \eqref{eqn:hellypf5}, we see from \eqref{eqn:hellypf2}, that
\begin{align}
\label{eqn:hellypf6}
d_1\le MK\sum_{i=1}^m \|\Delta_i E-\Delta_i E_k\|+2M\sum_{i=1}^m (\osc_{I_i} f)\cdot \frac{1}{m}+\frac{2CK^2}{m^2}
\end{align}
Let $\epsilon>0$. Since $f$ is Riemann integrable, we can choose $m$ large enough such that
\begin{align}
\label{eqn:hellypf7}
2M\sum_{i=1}^m (\osc_{I_i} f)\cdot \frac{1}{m}+\frac{2CK^2}{m^2}\le\frac{\epsilon}{3}
\end{align}
With $m$ fixed like that we now use the pointwise convergence of $E_k$ and make $k$ large enough such that
\begin{align}
\label{eqn:hellypf8}
MK\sum_{i=1}^m \|\Delta_i E-\Delta_i E_k\|\le\frac{\epsilon}{6}
\end{align}

This is possible, since $E-E_k$ is only being evaluated at finitely many points. Whenever we make $m$ even larger later on during the estimate of $d_2$, we silently also increase $k$ accordingly such that \eqref{eqn:hellypf8} holds. 

Combining \eqref{eqn:hellypf6},\eqref{eqn:hellypf7} and \eqref{eqn:hellypf8}, we get $d_1\le\frac{\epsilon}{2}$.\\

Let us now estimate $d_2$. Similarly as for $d_1$, the telescoping identity and Proposition \ref{prop:minttaylorest} imply

\begin{align}
d_2 &\le M\sum_{i=1}^m \left\|\int_{t_{i-1}}^{t_i} f\,dE_k-\int_{t_{i-1}}^{t_i}  f_k\,dE_k\right\| +C\left(\int_{t_{i-1}}^{t_i} |f(t)|dt\right)^2+\\
&+C\left(\int_{t_{i-1}}^{t_i} |f_k(t)|dt\right)^2\nonumber
\le M\sum_{i=1}^m \int_{t_{i-1}}^{t_i} |f(t)-f_k(t)| dt+\frac{2CK^2}{m^2}\\\label{eqn:hellypf9}
&=M\int_0^1 |f(t)-f_k(t)|dt+\frac{2CK^2}{m^2}
\end{align}
Note that $f,f_k$ are measurable, therefore we may use Egorov's theorem\index{Egorov's theorem} to conclude that for every $\delta>0$, there exists a measurable set $Q\subset[0,1]$ such that $\lambda([0,1]\backslash Q)~\le~\delta$, where $\lambda$ denotes the Lebesgue measure on $[0,1]$, and $f_k\rightarrow f$ uniformly on $Q$. Let us choose $\delta=\frac{\epsilon}{12MK}$ and make $k$ large enough such that
$|f(t)-f_k(t)|\le\frac{\epsilon}{6M}$ for all $t\in Q$.
Then
\begin{align}
M\int_0^1 |f(t)-f_k(t)| dt &= M\int_Q |f-f_k| d\lambda+M\int_{[0,1]\backslash Q} |f-f_k| d\lambda\nonumber\\
\label{eqn:hellypf10}
&\le M\left(\frac{\epsilon}{6M}+2K\delta\right)=\frac{\epsilon}{3}
\end{align}
Note that we have reinterpreted the Riemann-Stieltjes integral as a Lebesgue integral.
Now \eqref{eqn:hellypf9} and \eqref{eqn:hellypf10} imply
$d_2 \le \frac{\epsilon}{2}$ for large enough $m$.

Altogether we proved that

$$\left\| \mint_0^1 e^{f\,dE}-\mint_0^1 e^{f_k\,dE_k}\right\|\le d_1+d_2\le \epsilon$$

for sufficiently large $k$. Since $\epsilon$ was arbitrary, the claim follows.
\end{proof}

We will now also prove an analogue of Helly's selection theorem (see Theorem \ref{thm:hellyselsc} in the appendix).
It is not directly related to multiplicative integrals, but it is a natural addendum to the previous theorem and important for the proof of Potapov's theorem in Section \ref{sect:contractivemvfs}.

\begin{thm}[Helly's selection theorem for matrix-valued functions]\index{Helly's selection theorem}\index{Helly's first theorem}
\label{thm:hellyselmvf}
Let $(E^{(k)})_k$ be a uniformly bounded sequence of increasing mvfs on $[a,b]$. Then there exists a subsequence $(E^{(k_j)})_j$ such that $E^{(k_j)}$ converges pointwise to an increasing mvf $E$ on $[a,b]$. 
\end{thm}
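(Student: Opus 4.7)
The plan is to reduce to the scalar Helly selection theorem (Theorem \ref{thm:hellyselsc}) by slicing with quadratic forms. The key observation is that for any fixed vector $v \in \C^n$ and each $k$, the scalar function $t \mapsto v^* E^{(k)}(t) v$ is increasing (because $E^{(k)}$ is Hermitian and $v^*(E^{(k)}(t)-E^{(k)}(s))v \ge 0$ whenever $t \ge s$) and uniformly bounded in $k$ and $t$, since $|v^* E^{(k)}(t) v| \le \|v\|_2^2 \|E^{(k)}(t)\|$ and the $E^{(k)}$ are uniformly bounded in matrix norm.

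Next I would fix a finite collection $V \subset \C^n$ whose associated Hermitian quadratic forms determine a Hermitian matrix, for instance
\[
V = \{e_i\}_{i=1}^n \cup \{e_i+e_j : i<j\} \cup \{e_i+ie_j : i<j\},
\]
where $(e_i)$ is the standard basis. Applying scalar Helly once for each $v \in V$ (finitely many times, so no diagonal argument is needed), I extract a subsequence $(E^{(k_j)})_j$ such that $v^* E^{(k_j)}(t) v$ converges pointwise on $[a,b]$ for every $v \in V$.

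By polarization, both $\Re(e_i^* F e_j)$ and $\Im(e_i^* F e_j)$ are fixed real linear combinations of the quantities $v^* F v$ for $v \in V$, so every matrix entry of $E^{(k_j)}(t)$ converges pointwise. Call the limit $E(t)$. Since each $E^{(k_j)}(t)$ is Hermitian and both the Hermitian property and positive semidefiniteness pass to entrywise limits (the latter via the characterization $F \ge 0 \iff v^* F v \ge 0$ for all $v$), $E$ is Hermitian and satisfies $E(t)-E(s) \ge 0$ for $t \ge s$, i.e.\ $E$ is increasing.

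I do not anticipate a serious obstacle: the argument is a straightforward bootstrapping of scalar Helly by $|V|=O(n^2)$ quadratic-form slicings, and the monotonicity structure passes to the limit purely by linear algebra. The only thing that warrants brief verification is that the chosen $V$ really does determine an arbitrary Hermitian matrix through $F \mapsto v^* F v$, which is a standard consequence of the polarization identities.
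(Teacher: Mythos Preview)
Your proof is correct. The paper takes a slightly different but equally elementary route: rather than slicing with quadratic forms $v^* E^{(k)} v$, it works directly with the matrix entries $E_{ij}^{(k)}$. Off-diagonal entries of an increasing mvf need not be monotone, so the paper instead shows each entry has uniformly bounded total variation via the chain
\[
|\Delta_l E_{ij}^{(k)}| \le \|\Delta_l E^{(k)}\| \le \tr \Delta_l E^{(k)} = \Delta_l \tr E^{(k)},
\]
the middle inequality using positivity of the increments; summing gives $\var_{[a,b]} E_{ij}^{(k)} \le \tr(E^{(k)}(b)-E^{(k)}(a)) \le 2nC$, after which scalar Helly applies to each entry. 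Your approach sidesteps this variation estimate entirely: the quadratic forms $v^* E^{(k)} v$ are automatically increasing, so scalar Helly applies immediately, and polarization recovers the entries afterward. Both arguments invoke scalar Helly $O(n^2)$ times and conclude that increasingness passes to pointwise limits; the only real difference is whether one pays the trace-bound price upfront (paper) or bypasses it via polarization (you).
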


\begin{proof}
Choose $C>0$ such that $\|E^{(k)}(t)\|\le C$ for all $k$ and $t\in[a,b]$. We claim that for all $i,j$, the entries $E_{ij}^{(k)}$ form a sequence of BV-functions with uniformly bounded total variation. Let $\tau=\{a=t_0\le t_1\le \cdots\le t_m=b\}$ be a partition of $[a,b]$. Then we estimate
\begin{align}
\label{eqn:hellyselmvfpf1}
|\Delta_{l} E_{ij}^{(k)}|\le \|\Delta_l E^{(k)}\|\le \tr \Delta_l E^{(k)}=\Delta_l \tr E^{(k)}
\end{align}
for $l=1,2,\dots,m$. In the second inequality we have used positivity of $\Delta_l E^{(k)}$. Therefore
$$\var_{[a,b]}^\tau E_{ij}^{(k)}=\sum_{l=1}^m |\Delta_{l} E_{ij}^{(k)}|\le \sum_{l=1}^m \Delta_l \tr E^{(k)}=\tr (E^{(k)}(b)- E^{(k)}(a))$$
Using the estimate $\tr A=\sum_{i=1}^n A_{ii}\le n\|A\|$ for $A\ge 0$ we see that
$$\var_{[a,b]} E_{ij}^{(k)}\le 2nC$$
Hence we can repeatedly apply Helly's scalar selection Theorem \ref{thm:hellyselsc} to find a subsequence such that all the entries $E^{(k_j)}_{ij}$ converge pointwise to BV-functions $E_{ij}$. The resulting mvf $E=(E_{ij})_{ij}$ is also increasing as pointwise limit of increasing mvfs.
\end{proof}

Both theorems were given by Potapov in \cite[Appendix \S 2]{Potapov}, but with a slightly different proof for the selection theorem.

\section{Contractive analytic mvfs}
\label{sect:contractivemvfs}

By an \defin{analytic mvf}\index{analytic mvf} on the unit disk $\D\subset\C$ we mean a function $A:\D\rightarrow M_n$ all components of which are holomorphic throughout $\D$.
A mvf $A:\D\rightarrow M_n$ is called \defin{contractive}\index{contractive mvf} if $$A(z)A^*(z)\le I$$ for all $z\in\D$, i.e. the Hermitian matrix $I-A(z)A^*(z)$ is positive semidefinite. An equivalent condition is that $\|A(z)\|\le 1$ for all $z\in\D$. This and some other basic facts are proven in Appendix \ref{sect:background}.

We say that $A$ is \defin{bounded}\index{bounded mvf} if
$$\|A\|_\infty=\sup_{z\in\D}\|A(z)\|<\infty$$
We denote the space of bounded analytic matrix functions on $\D$ whose determinant does not vanish identically by $\H^\infty$\index{$\H^\infty$}. The subspace of analytic mvfs, which are also contractive on $\D$ will be denoted by $\Sch\subset\H^\infty$\index{$\Sch$}\index{Schur class}, where the letter $\Sch$ is chosen in honour of I. Schur, who studied this class of functions in the case $n=1$.

The goal of this section is to prove a theorem of V.P. Potapov on the multiplicative structure of functions in the class $\Sch$. In his paper \cite{Potapov}, Potapov considered the more general case of $J$-contractive mvfs\index{$J$-contractive mvf}, however we only require the case $J=I$. 

\begin{definition} Let
\begin{align}
\label{betadef}
\beta_{z_0}(z)=\left\{\begin{array}{cc}
\frac{z_0-z}{1-\overline{z_0}z}\frac{|z_0|}{z_0}, & \text{if}\,z_0\not=0\\
z, & \text{if}\,z_0=0
\end{array}\right.
\end{align}
A \defin{Blaschke-Potapov factor} (B.P. factor)\index{Blaschke-Potapov factor}\index{B.P. factor} is a function $b:\D\rightarrow M_n$ such that
\begin{align}
\label{BPfact1}
b(z)=U\left(\begin{array}{cc}\beta_{z_0}(z) I_r & 0\\0 & I_{n-r}\end{array}\right)U^*
\end{align}
for all $z\in\D$, where $U$ is a constant unitary matrix, $0<r\le n$ and $z_0\in\C$. We call $r$ the \defin{rank} of the B.P. factor. A \defin{Blaschke-Potapov product} (B.P. product)\index{Blaschke-Potapov product}\index{B.P. product} is a possibly infinite product of B.P. factors, which may be multiplied from the right or left by a constant unitary matrix. By convention, also a unitary constant is considered a B.P. product.
\end{definition}

Equivalently, we can define a B.P. factor by
\begin{align}
\label{BPfact2}
b(z)=I-P+\beta_{z_0}(z)P
\end{align}
where $P\in M_n$ is an orthogonal projection and $z_0\in\D$. The connection to \eqref{BPfact1} is that $P$ projects onto the $r$-dimensional image of $I-b(0)$, i.e. 
$P=U\left(\begin{array}{cc}I_r & 0\\0 & 0\end{array}\right)U^*$. This shows that B.P. factors are uniquely determined by the zero $z_0$ and the choice of the subspace that $P$ projects onto.

The condition for the convergence of a B.P. product turns out to be the same condition as for the scalar analogue (see Theorem \ref{bpconv}). Whenever in the following we speak of a B.P. product as a function, it is understood implicitly that the B.P. product really is convergent in the sense defined later in this section.

Let us denote the Herglotz kernel\index{Herglotz kernel} by
$$h_z(\theta)=\frac{z+e^{i\theta}}{z-e^{i\theta}}$$
for $z\in\D$ and $\theta\in[0,2\pi]$.

\begin{thm}[Potapov]\index{Potapov's fundamental theorem}
\label{thm:potapov}
Let $A\in\Sch$. Then $A$ can be written as
\begin{align}
\label{eqn:potapovrepr}
A(z)=B(z)\cdot \mint^L_0 \exp\left(h_z(\theta(t))\,dE(t)\right)
\end{align}
Here, $B$ is a B.P. product corresponding to the zeros of $\det A$, $0\le L<\infty$, $E$ an increasing mvf such that $\tr E(t)=t$ for $t\in[0,L]$ and $\theta:[0,L]\rightarrow [0,2\pi]$ a right-continuous increasing function.
\end{thm}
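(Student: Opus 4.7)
The plan has two stages: first, extract the Blaschke--Potapov zeros of $A$, and second, represent the remaining nonvanishing contractive factor as a single multiplicative integral over the boundary circle.

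For the first stage, the zeros $\{z_k\}$ of $\det A$ form a discrete set in $\D$, and I would peel them off from the left one at a time. At each $z_k$, let $P_k$ denote the orthogonal projection onto $\ker A(z_k)$ and form the elementary B.P.\ factor $b_k(z) = I - P_k + \beta_{z_k}(z)P_k$. A matrix Schwarz-type argument shows that $b_k(z)^{-1} A(z)$ extends holomorphically across $z_k$ and remains in $\Sch$; iterating (and repeating at a single point if needed to exhaust the nullspace for higher-multiplicity zeros of $\det$) produces a sequence of factors whose product is the desired $B$. A Jensen-type estimate for $\det A$ yields the Blaschke condition $\sum_k (1-|z_k|) < \infty$, which together with $\|b_k\| \le 1$ ensures normal convergence on compacta of $\D$. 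This gives a decomposition $A = B \cdot C$ with $C \in \Sch$ and $\det C(z) \ne 0$ for all $z \in \D$.

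For the second stage, I would represent $C$ as a multiplicative integral by a discretize-and-pass-to-the-limit strategy. From the boundary behavior of $C$ one extracts, via the matrix-valued Herglotz representation theorem proved in the appendix, an increasing mvf $\widetilde E$ on $[0, 2\pi]$ that captures the nontangential radial data of $C$. For each partition $\tau_N = \{0 = \theta_0 < \theta_1 < \cdots < \theta_N = 2\pi\}$, assemble the approximant
$$C_N(z) = \prodr_{i=1}^N \exp\bigl(h_z(\theta_i)\,\Delta_i \widetilde E\bigr).$$
The total trace $\tr \widetilde E(2\pi)$ is controlled by $\|C\|_\infty$, so Helly's selection theorem for mvfs (Theorem~\ref{thm:hellyselmvf}) yields a subsequence of the discretized integrators converging pointwise to $\widetilde E$, and Helly's convergence theorem for multiplicative integrals (Theorem~\ref{thm:hellyconvmvf}) identifies $\lim_{N} C_N(z) = \mint_0^{2\pi} \exp(h_z(\theta)\,d\widetilde E(\theta))$. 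Reparameterizing by trace-arc-length, $t = \tr \widetilde E(\theta)$, and invoking the change-of-variables formula recasts this in the normalized form $\tr E(t) = t$ with $\theta(t)$ the right-continuous increasing generalized inverse, exactly as in the statement.

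The hardest step is clearly the second one: bridging the additive Herglotz representation on the circle with the order-sensitive multiplicative integral. Matrix noncommutativity precludes simply exponentiating the Herglotz integrand, so the approximants $C_N$ must be engineered carefully enough that both $C_N(z) \to C(z)$ locally uniformly in $\D$ and the ordering of factors is consistent across refinements. Essentially every quantitative tool developed in Section~\ref{sect:multint} feeds into this construction: the Cauchy criterion (Lemma~\ref{lemma:cauchycrit}) underpins existence of the target integral; the Taylor-type estimate of Proposition~\ref{prop:minttaylorest} controls the remainders in comparing products to sums; the telescoping identity (Lemma~\ref{lemma:telescoping}) reduces the global comparison to local ones; and the Stieltjes bound of Lemma~\ref{lemma:addstieltjesest} ensures uniform estimates on each arc. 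The proof then reduces to showing that these combined estimates beat the discretization error, completing the factorization.
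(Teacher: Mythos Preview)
Your first stage is essentially what the paper does (Theorem~\ref{thm:bpfactor}), so that part is fine.

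The second stage has a genuine gap. You propose to extract an increasing mvf $\widetilde E$ from the Herglotz representation and form products $C_N(z) = \prodr_i \exp(h_z(\theta_i)\,\Delta_i\widetilde E)$, then use Helly to pass to the limit. The products $C_N$ do converge to the multiplicative integral $\mint_0^{2\pi}\exp(h_z(\theta)\,d\widetilde E(\theta))$; that is just the definition. What is missing is any reason why this integral equals $C(z)$. The Herglotz theorem applies to $T(z)=i(wI-C(z))^{-1}(wI+C(z))$ and gives an \emph{additive} integral representation of $T$; inverting the Cayley transform does not produce a multiplicative integral for $C$, and noncommutativity (which you flag yourself) blocks any direct exponentiation. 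So you have a candidate mvf $\widetilde E$ but no link between $\mint\exp(h_z\,d\widetilde E)$ and the original function.

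The paper closes this gap by a different route. It first proves a rational approximation theorem (Theorem~\ref{thm:rationalapprox}): using the Cayley transform and Herglotz representation, $C$ is approximated locally uniformly by rational contractive mvfs unitary on $\T$, which by Lemma~\ref{lemma:finitebp} are finite B.P.\ products $A_k(z)=b_1^{(k)}(z)\cdots b_{m_k}^{(k)}(z)U_k$. Since $\det C\ne 0$, the zeros $z_j^{(k)}$ of these approximants escape to the boundary, and a direct estimate (Lemma~\ref{lemma:Atildelemma}) shows each factor $b_j^{(k)}(z)$ is close to $\exp(h_z(\theta_j^{(k)})H_j^{(k)})$. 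The resulting product of exponentials is then written as a genuine multiplicative integral with piecewise-linear integrator $E^{(k)}$ and step function $\theta^{(k)}$, and only now do Helly's theorems apply to pass to the limit. The crucial content you are missing is precisely this approximation of $C$ by finite B.P.\ products together with the replacement of B.P.\ factors by exponentials; without it there is no mechanism tying the multiplicative integral back to $C$.
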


The proof of the theorem will proceed in two steps. First we detach a maximal Blaschke-Potapov product to obtain a mvf with non-vanishing determinant. In the second step we use an approximation by rational functions and Helly's theorems to obtain the desired multiplicative integral representation.

\subsection{Blaschke-Potapov products}
In this section we discuss the convergence of B.P. products and prove a factorization of an arbitrary function in $\H^\infty$ into a maximal Blaschke-Potapov product and a function with non-vanishing determinant. By a \defin{zero}\index{zero of a mvf} of a function $A\in\Sch$, we always mean a zero of $\det A$, i.e. a point at which $A$ becomes singular. If $z$ is a zero of $A$, then the dimension of $\ker A$ is called the \defin{rank}\index{rank of a zero} of the zero.

\subsubsection{Convergence of B.P. products}
A scalar Blaschke product $b(z)=z^m\prod^\infty_{i=1}\frac{z_i-z}{1-\overline{z_i}z}\frac{|z_i|}{z_i}$ converges if and only if the \defin{Blaschke condition}\index{Blaschke condition}
$$\sum_{i\ge 1} (1-|z_i|)<\infty$$
is fulfilled (see \cite[Chapter II.2]{Garnett}). We will prove that the same is true for B.P. products. It is clear that the Blaschke condition is necessary. For, if $A\in\H^\infty$, then $\det A$ is a scalar bounded analytic function, whence it follows that its zeros satisfy the Blaschke condition.

We will now prove the converse by adapting the corresponding scalar proof as presented in \cite{Rudin2}.

Let us call an infinite product $P=\prod^\infty_{i=1} B_i$ of matrices $B_i$ \defin{convergent}\index{convergent product} if the sequence given by $P_k=\prod^k_{i=1}B_i=B_1\cdot B_2\cdots B_k$ converges in the $\|\cdot\|$ norm. The limit is then denoted by $\prod^\infty_{i=1}B_i$. The notion of uniform convergence for an infinite product of matrix-valued functions is defined accordingly.

\begin{lemma}\label{estimatelemma}
Let $(B_i)_{i=1,\dots,k}$ be given matrices. Then
$$\|I-\prod_{i=1}^k B_i\|\le \prod^k_{i=1}\left(1+\|I-B_i\|\right)-1$$
\end{lemma}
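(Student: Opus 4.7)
My plan is to prove this by induction on $k$. The base case $k=1$ is immediate, since the right-hand side equals $\|I-B_1\|$ exactly.

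For the inductive step, I would write $P_k = \prod_{i=1}^k B_i$ and decompose
\[
I - P_{k+1} = I - P_k B_{k+1} = (I - P_k) + P_k (I - B_{k+1}),
\]
so that by the triangle inequality and submultiplicativity of $\|\cdot\|$,
\[
\|I - P_{k+1}\| \le \|I - P_k\| + \|P_k\|\cdot\|I - B_{k+1}\|.
\]
The induction hypothesis gives $\|I - P_k\| \le \prod_{i=1}^k(1+\|I-B_i\|) - 1$, and combining this with the reverse triangle estimate $\|P_k\| \le 1 + \|I - P_k\| \le \prod_{i=1}^k(1+\|I-B_i\|)$ yields, after substitution and factoring,
\[
\|I - P_{k+1}\| \le \prod_{i=1}^k(1+\|I-B_i\|)\cdot(1 + \|I-B_{k+1}\|) - 1,
\]
which is the desired bound for $k+1$.

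Alternatively, one could apply the telescoping identity from Lemma \ref{lemma:telescoping} directly with $P_\nu = B_\nu$ and $Q_\nu = I$, obtaining
\[
\prod_{i=1}^k B_i - I = \sum_{l=1}^k \Bigl(\prod_{\nu=1}^{l-1} B_\nu\Bigr)(B_l - I),
\]
then bound $\|\prod_{\nu<l} B_\nu\| \le \prod_{\nu<l}(1 + \|I-B_\nu\|)$ and recognize the resulting sum $\sum_l \|I-B_l\|\prod_{\nu<l}(1+\|I-B_\nu\|)$ as the telescoping expansion of $\prod_{i=1}^k(1+\|I-B_i\|) - 1$. I would probably favor the short induction, since no routine obstacle arises there; the only mild point to watch is the submultiplicativity estimate for $\|P_k\|$, which is why bounding $\|P_k\|$ via $1+\|I-P_k\|$ and invoking the inductive hypothesis is the cleanest route.
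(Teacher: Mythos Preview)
Your proof is correct and follows essentially the same induction as the paper. The only cosmetic difference is the algebraic splitting: the paper writes $A_{k+1}-I=(A_k-I)B_{k+1}+(B_{k+1}-I)$ and bounds $\|B_{k+1}\|\le 1+\|I-B_{k+1}\|$, whereas you use the symmetric variant $I-P_{k+1}=(I-P_k)+P_k(I-B_{k+1})$ and bound $\|P_k\|\le 1+\|I-P_k\|$; both lead to the identical arithmetic $(a_k-1)+a_k\|I-B_{k+1}\|=a_{k+1}-1$.
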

\begin{proof}
We do an induction on $k$. For $k=1$ there is nothing to show. Writing $A_k=\prod_{i=1}^k B_i$ and $a_k=\prod_{i=1}^k (1+\|I-B_i\|)$ we see that
$$A_{k+1}-I=(A_k-I)(I+(B_{k+1}-I))+B_{k+1}-I$$
and thus
$$\left\|I-A_{k+1}\right\|
\le (a_k-1)(1+\|I-B_{k+1}\|)+\|I-B_{k+1}\|=a_{k+1}-1$$
by the induction hypothesis.
\end{proof}

\begin{thm}\label{thm:prodconv}Let $(B_i)_i$ be matrix-valued functions on $\D$ such that
\begin{enumerate}
\item[(a)] $\sum_{i\ge 1}\|I-B_i(z)\|$ converges uniformly on compact subsets of $\D$ and
\item[(b)] the sequence $P_k(z)=\prod^k_{i=1}B_i(z)$ is uniformly bounded on $\D$.
\end{enumerate}
Then $P(z)=\prod^\infty_{i=1}B_i(z)$ converges uniformly on compact subsets of $\D$. The same theorem holds if we replace $\prod$ everywhere by $\prodl$.
\end{thm}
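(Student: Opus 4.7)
The plan is to show that the partial products $(P_k)_k$ form a uniformly Cauchy sequence on each compact subset $K \subset \D$ and then invoke completeness of $M_n$ (together with the fact that a uniformly Cauchy sequence of continuous mvfs converges uniformly to a continuous mvf). First I would fix a compact $K \subset \D$ and, for $n < m$, use the identity
\begin{align*}
P_m(z) - P_n(z) = P_n(z)\bigl(B_{n+1}(z)\cdots B_m(z) - I\bigr).
\end{align*}
Hypothesis (b) bounds $\|P_n(z)\|$ by some $M$ independent of $n$ and $z$, so it suffices to control the second factor uniformly on $K$.

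For that, I would apply Lemma \ref{estimatelemma} to the matrices $B_{n+1}(z),\dots,B_m(z)$, giving
\begin{align*}
\bigl\|B_{n+1}(z)\cdots B_m(z) - I\bigr\|
\le \prod_{i=n+1}^{m}\bigl(1 + \|I - B_i(z)\|\bigr) - 1
\le \exp\!\Bigl(\sum_{i=n+1}^{m}\|I - B_i(z)\|\Bigr) - 1,
\end{align*}
where the second step uses $1 + x \le e^x$ for $x \ge 0$. By hypothesis (a), the tail $\sum_{i > n}\|I - B_i(z)\|$ tends to $0$ uniformly in $z \in K$ as $n \to \infty$, so the right-hand side goes to $0$ uniformly on $K$. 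Multiplying by the uniform bound $M$ gives $\|P_m - P_n\| \to 0$ uniformly on $K$, establishing the uniform Cauchy property. Taking the pointwise limit $P(z) = \lim_k P_k(z)$ then yields uniform convergence on $K$, and since $K$ was an arbitrary compact subset of $\D$, the first claim follows.

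For the analogous statement with $\prodl$ in place of $\prod$, one simply replaces the telescoping identity by $P_m - P_n = (B_m \cdots B_{n+1} - I)P_n$ (with $P_k = B_k\cdots B_1$) and observes that Lemma \ref{estimatelemma} is equally valid when the matrices are multiplied from right to left, so the same estimate goes through verbatim. I don't expect a real obstacle here: the only step requiring care is justifying the passage from a pointwise product bound to a uniform one, and that is precisely what hypothesis (b) is designed to provide (without it, one would still get convergence but with no a priori uniform control on the size of $P_n$).
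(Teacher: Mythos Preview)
Your proof is correct and follows essentially the same approach as the paper: factor $P_m - P_n = P_n(B_{n+1}\cdots B_m - I)$, bound $\|P_n\|$ using hypothesis (b), and control the tail factor via Lemma \ref{estimatelemma} combined with $1+x\le e^x$ and the uniform convergence from (a). The paper's treatment of the $\prodl$ case is likewise identical to yours, invoking an analogous version of the lemma for right-to-left products.
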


\begin{proof}
Let $K\subset\D$ be compact. By assumption there is $C>0$ such that $\|P_k(z)\|\le C$ for all $z\in\D$ and $k\ge 1$. Choose $\epsilon>0$ and $N$ so large that
$$\sup_{z\in K}\sum^l_{i=k+1}\|I-B_i(z)\|<\epsilon$$
for $l\ge k\ge N$. Then we have by Lemma \ref{estimatelemma}
\begin{align*}
\|P_k(z)-P_l(z)\| &\le \|P_k(z)\|\cdot\|I-\prod_{i=k+1}^l B_i(z)\|\\
&\le 
C\left(\prod^l_{i=k+1}(1+\|I-B_i(z)\|)-1\right)\\
&\le C\left(\exp\left(\sum^l_{i=k+1}\|I-B_i(z)\|\right)-1\right)\\
&\le C(e^\epsilon-1)
\end{align*}
for $z\in K$. The right hand side converges to $0$ if $\epsilon\rightarrow 0$. The proof for $\prodl$ works analogously using also an analogous version of Lemma \ref{estimatelemma}.
\end{proof}

\begin{thm}\label{bpconv}\index{Blaschke condition}
If $B$ is a formal B.P. product corresponding to the zeros $z_1,z_2,\dots$ and the Blaschke condition holds, then $B$ converges (uniformly on compact sets) to an analytic matrix function in $\D$ and $\|B\|_\infty=1$.
\end{thm}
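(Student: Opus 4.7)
The strategy is to apply the general infinite-product criterion of Theorem \ref{thm:prodconv} to obtain convergence, and then pass to the scalar determinant to pin down the sup-norm. Writing the formal product as $B = \prodr_{i=1}^\infty b_i$ (any leading or trailing unitary constant affects neither convergence nor operator norm, so it may be ignored), each factor takes the form $b_i(z) = I - P_i + \beta_{z_i}(z) P_i$ with $P_i$ an orthogonal projection of some rank $r_i \ge 1$ and $z_i \in \D$.

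To verify hypothesis (b) of Theorem \ref{thm:prodconv}, observe that on $\ker P_i$ the factor $b_i(z)$ acts as the identity and on $\mathrm{range}\, P_i$ as the scalar multiple $\beta_{z_i}(z) I$; since these subspaces are orthogonal and $|\beta_{z_i}(z)| \le 1$ on $\D$, we get $\|b_i(z)\| \le 1$ pointwise, and hence every partial product satisfies $\|\prod_{i=1}^k b_i(z)\| \le 1$. For hypothesis (a), a direct computation gives $I - b_i(z) = (1 - \beta_{z_i}(z)) P_i$, so that $\|I - b_i(z)\| \le |1 - \beta_{z_i}(z)|$. The standard scalar estimate, obtained by clearing denominators in the definition of $\beta_{z_i}$, yields
\[
|1 - \beta_{z_0}(z)| \le \frac{2(1 - |z_0|)}{1 - |z|}
\]
(with the trivial case $z_0 = 0$ handled separately). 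For $z$ in a compact set $K \subset \D$ with $\sup_{z \in K} |z| = r < 1$, this gives $\sum_i \|I - b_i(z)\| \le \frac{2}{1-r} \sum_i (1 - |z_i|)$, convergent uniformly in $z \in K$ by the Blaschke condition. Theorem \ref{thm:prodconv} then furnishes uniform convergence of $B$ on compacta; the limit is analytic, and the bound $\|b_i(z)\| \le 1$ passes to the limit to give $\|B\|_\infty \le 1$.

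For the reverse inequality $\|B\|_\infty \ge 1$, the cleanest route is to apply the determinant. Since $b_i$ acts as $\beta_{z_i}(z) I$ on an $r_i$-dimensional subspace and as identity on its orthogonal complement, $\det b_i(z) = \beta_{z_i}(z)^{r_i}$, and so $\det B(z) = \prod_i \beta_{z_i}(z)^{r_i}$ is a convergent scalar Blaschke product (with zeros counted according to rank). A non-trivial scalar Blaschke product has supremum modulus equal to $1$ on $\D$, and combining this with the elementary inequality $|\det B(z)| \le \|B(z)\|^n$ forces $\|B\|_\infty \ge \|\det B\|_\infty^{1/n} = 1$. The main technical point is the uniform summability estimate in the second paragraph; everything else is a direct application of Theorem \ref{thm:prodconv} together with the scalar Blaschke-product theory already on hand.
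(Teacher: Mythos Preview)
Your proof is correct and follows essentially the same strategy as the paper: both verify hypotheses (a) and (b) of Theorem~\ref{thm:prodconv} via the identity $I-b_i(z)=(1-\beta_{z_i}(z))P_i$ and the same scalar estimate (the paper writes the bound as $\frac{1+r}{1-r}(1-|z_i|)$, identical to yours), and both invoke Lemma~\ref{unifconvlemma} for analyticity of the limit.

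The one point of genuine difference is the verification of $\|B\|_\infty=1$. The paper asserts that $\|B(z)\|$ itself is the absolute value of a scalar Blaschke product; this is somewhat loose, since for non-commuting projections $P_i$ the operator norm of the partial product need not coincide with any single Blaschke modulus (e.g.\ for diagonal $b_1,b_2$ acting in different coordinates one gets a maximum of two Blaschke moduli). Your route via $\det B(z)=\prod_i\beta_{z_i}(z)^{r_i}$ together with $|\det B(z)|\le\|B(z)\|^n$ is cleaner and avoids this issue entirely; note that the scalar Blaschke condition $\sum_i r_i(1-|z_i|)<\infty$ needed for convergence of $\det B$ follows from $r_i\le n$.
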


\begin{proof}
We may assume $z_i\not=0$. Now we want to apply Theorem \ref{thm:prodconv}. Let $B_k(z)=\prod^k_{i=1}b_i(z)$ be the $k$th partial product of $B$ and 
$$b_i(z)=U_i\left(\begin{array}{cc}\frac{z_i-z}{1-\overline{z_i}z}\frac{|z_i|}{z_i} I_{r_i} & 0\\0 & I_{n-r_i}\end{array}\right)U_i^{-1}=I-U_i
\left(\begin{array}{cc}\left(1-\frac{z_i-z}{1-\overline{z_i}z}\frac{|z_i|}{z_i}\right) I_{r_i} & 0\\0 & 0\end{array}\right)U_i^{-1}$$
Then for $|z|\le r<1$ we get
$$\|I-b_i(z)\|=\left|1-\frac{z_i-z}{1-\overline{z_i}z}\frac{|z_i|}{z_i}\right|
=\left|\frac{z_i+z|z_i|}{z_i-z|z_i|^2}\right|(1-|z_i|)\le\frac{1+r}{1-r}(1-|z_i|)$$
which implies condition (a). 
Note that $\|B_k\|$ is the absolute value of a finite scalar Blaschke product, so condition (b) is satisfied. By the theorem and Lemma \ref{unifconvlemma} we obtain $B$ as an analytic matrix function. $\|B\|_\infty=1$ is again clear because $\|B\|$ is the absolute value of a scalar Blaschke product.
\end{proof}

\subsubsection{Factorization}
Our next objective is the factorization of an arbitrary bounded analytic matrix function into a B.P. product and a function with non-vanishing determinant. 
To do this we will first analyze the detachment of a single Blaschke-Potapov factor from a given contractive analytic function.

\begin{definition}If $A\in\Sch$ has a zero at some $z_0\in\D$, we call a B.P. factor $b$ with $b(z_0)=0$ \defin{detachable} from $A$ if $b^{-1}A\in\Sch$.
\end{definition}

\begin{lemma}[Detachability condition]\index{detachability condition}
\label{detachlemma}
Suppose $A\in\Sch$ and $\det A(z_0)=0$ for $z_0\in\D$. Then a B.P.factor $b$, which is given by \eqref{BPfact1} is detachable from $A$ if and only if 
\begin{align}
\label{detachcrit}
U^*A(z_0)=\left(\begin{array}{cc}0_r & 0\\ * & * \end{array}\right)
\end{align}
where $0_r$ denotes the $r\times r$ zero matrix and the $*$ denote arbitrary block matrices of the appropriate dimensions.
\end{lemma}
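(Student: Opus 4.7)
The strategy is to reduce the question, via the explicit block form of $b$, to an iff between analyticity of $b^{-1}A$ at $z_0$ and the stated block-vanishing condition on $A(z_0)$; contractivity will then fall out of a matrix-valued maximum modulus argument.

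First I would set up notation. Put $D(z) = \diag(\beta_{z_0}(z) I_r, I_{n-r})$ so that $b(z) = U D(z) U^*$, and decompose
$$U^* A(z) = \begin{pmatrix} X(z) \\ Y(z) \end{pmatrix},$$
where $X$ comprises the top $r$ rows and $Y$ the bottom $n-r$ rows. Then
$$U^* b(z)^{-1} A(z) = D(z)^{-1} U^* A(z) = \begin{pmatrix} \beta_{z_0}(z)^{-1} X(z) \\ Y(z) \end{pmatrix},$$
and the displayed condition \eqref{detachcrit} says precisely $X(z_0) = 0$.

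The necessity direction is then almost immediate: if $b^{-1}A \in \mathcal{S}$, it is analytic on all of $\D$, so in particular the top block $\beta_{z_0}^{-1} X$ is analytic at $z_0$, and since $\beta_{z_0}$ has a simple zero there with nonvanishing derivative, this forces $X(z_0) = 0$.

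For sufficiency, assume $X(z_0) = 0$. Then $\beta_{z_0}^{-1}X$ has a removable singularity at $z_0$, so $f := b^{-1}A$ extends to an analytic mvf on all of $\D$. To upgrade this to $f \in \mathcal{S}$, I would invoke the matrix-valued maximum modulus principle. Two inputs are needed. First, on $\T$ one has $|\beta_{z_0}(e^{i\theta})| = 1$, so $b(e^{i\theta})$ is unitary and the radial boundary values of $f$ (which exist almost everywhere because $A$ is bounded) satisfy $\|f^*(e^{i\theta})\| = \|A^*(e^{i\theta})\| \le 1$ a.e.\ on $\T$. Second, $f$ is bounded on $\D$: it is continuous on a compact neighborhood of $z_0$ once the singularity is removed, while outside such a neighborhood $|\beta_{z_0}|$ is uniformly bounded below and hence $\|b^{-1}\|$ is uniformly bounded above. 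Given both, for any unit vectors $u, v \in \C^n$ the scalar function $z \mapsto \langle f(z) v, u\rangle$ is bounded and analytic on $\D$ with boundary modulus at most $1$ a.e.\ on $\T$, so by the scalar Fatou/maximum modulus principle $|\langle f(z) v, u \rangle| \le 1$ for all $z \in \D$. Taking the supremum over unit $u$ and $v$ yields $\|f(z)\| \le 1$ on $\D$.

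The main obstacle is the global boundedness check used to apply the maximum principle, but even this is elementary once one observes that the pole of $\beta_{z_0}^{-1}$ is exactly cancelled by the hypothesis $X(z_0) = 0$ and that $|\beta_{z_0}|$ stays uniformly bounded below away from its unique zero in $\overline{\D}$; the rest is block algebra together with standard Hardy-space boundary theory.
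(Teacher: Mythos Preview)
Your proof is correct. The analyticity $\Leftrightarrow$ block-vanishing part is essentially the paper's argument in different notation: the paper writes out a Taylor expansion $A(z)=A(z_0)+R(z)(z-z_0)$ and tracks block entries of $U^*A(z_0)$, whereas you work with the top $r$ rows $X(z)$ of $U^*A(z)$; either way the point is that $\beta_{z_0}^{-1}$ has a simple pole at $z_0$ which is killed precisely when those rows vanish at $z_0$.

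The contractivity step is where the two arguments genuinely diverge. The paper never passes to the boundary $\T$: it observes that for any $\epsilon>0$ one can pick $r<1$ close enough to $1$ that $|\beta_{z_0}(z)|^{-1}\le 1+\epsilon$ on the annulus $r\le |z|<1$ (a property of the pseudohyperbolic metric), so $\|b^{-1}A\|\le 1+\epsilon$ there; then subharmonicity of $z\mapsto\|b^{-1}(z)A(z)\|$ (proved in the appendix as Lemma~\ref{subharmlemma}) and the maximum principle push this bound into the disk $|z|\le r$, and one lets $\epsilon\to 0$. Your route instead uses Fatou's theorem to get a.e.\ radial limits, unitarity of $b$ on $\T$, and the scalar $H^\infty$ maximum modulus principle applied to $\langle f(z)v,u\rangle$. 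Both are clean; the paper's version is more self-contained (it needs only subharmonicity, not boundary theory), while yours is the natural Hardy-space argument and would generalize more readily to settings where one already has boundary machinery in hand.
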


\begin{proof}
We use Taylor development to write
$$A(z)=A(z_0)+R(z)(z-z_0)$$
for $z$ in some neighborhood of $z_0$ which is small enough to be contained in $\D$ and $R$ a holomorphic mvf.
Writing $U^*A(z_0)=\left(\begin{array}{cc}B & C\\ * & * \end{array}\right)$ for some $r\times r$ matrix $B$ and $r\times (n-r)$ matrix $C$ we get
\begin{align*}
b^{-1}(z)A(z) &= U\left(\begin{array}{cc}\beta_{z_0}(z) I_r & 0\\0 & I_{n-r}\end{array}\right)\left(\begin{array}{cc}B & C\\ * & * \end{array}\right)+U\left(\begin{array}{cc}\beta_{z_0}(z)(z-z_0)I_r & 0\\0 & I_{n-r}\end{array}\right)U^* R(z)\\
&= \left(\begin{array}{cc}\beta_{z_0}(z)B & \beta_{z_0}(z)C\\ * & * \end{array}\right)+U\left(\begin{array}{cc}\beta_{z_0}(z)(z-z_0)I_r & 0\\0 & I_{n-r}\end{array}\right)U^* R(z)
\end{align*}
Recalling that $\beta_{z_0}^{-1}$ is given by \eqref{betadef} and thus only has a simple pole at $z_0$ we see that $b^{-1}A$ is holomorphic at $z_0$ if and only if \eqref{detachcrit} holds. It remains to show that $b^{-1}A$ is in that case contractive. Let $\epsilon>0$. 
Then we can choose $0\le r<1$ so close to $1$ that 
$$\|b^{-1}(z)\|=|\beta_{z_0}(z)|^{-1}=\left|\frac{1-\overline{z_0}z}{z_{0}-z}\right|\le 1+\epsilon$$ for $|z|\ge r$. This can be seen from the properties of the pseudohyperbolic distance as described in \cite[Chapter I.1]{Garnett}. Since $A$ is contractive by assumption this implies
$$\|b^{-1}(z)A(z)\|\le 1+\epsilon$$
for $|z|\ge r$. Because the norm of an analytic mvf is subharmonic\index{subharmonic} (see the appendix for a proof of this), we conclude by the maximum principle\index{maximum principle} (see \cite[Theorem I.6.3]{Garnett}) that this estimate holds also for $|z|<r$. As $\epsilon$ was arbitrary, we obtain $\|b^{-1}(z)A(z)\|\le 1$ for every $z\in\D$.
\end{proof}

Using the alternative formulation \eqref{BPfact2} for $b$, the detachability condition\index{detachability condition} \eqref{detachcrit} becomes
\begin{align}
\label{detachcrit2}
\mathrm{Im} A(z_0)\subseteq \ker P
\end{align}
or equivalently, $\Img A(z_0)\perp\Img P$. If we also require the rank $r$ of $P$ to be maximal, the condition becomes 
\begin{align}
\label{maxdetachcrit}
\Img A(z_0)=\ker P
\end{align}
so the B.P. factor is in that case uniquely determined.

We are now ready to prove the main result of this section.
\begin{thm}\label{thm:bpfactor}\index{B.P. product}
Given $A\in\Sch$, there exists a B.P. product $B$ and $\tilde{A}\in\Sch$ without zeros, such that $A=B\cdot\tilde{A}$.

Moreover, $B$ is uniquely determined up to multiplication with a constant unitary matrix.
\end{thm}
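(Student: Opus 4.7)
The plan is to prove existence by inductive detachment of Blaschke--Potapov factors, and uniqueness by a boundary-value analysis of $C := B_2^{-1}B_1$.

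For existence, the first observation is that $\det A$ is a nontrivial bounded scalar analytic function on $\D$, so its zero set, counted with multiplicity, satisfies the Blaschke condition. I would construct $B$ factor by factor. Setting $A_0 = A$, given $A_k \in \Sch$ with $\det A_k(\zeta) = 0$, Lemma~\ref{detachlemma} in its maximal form \eqref{maxdetachcrit} furnishes a unique B.P.\ factor $b_{k+1}$ at $\zeta$ characterized by $\ker P_{k+1} = \Img A_k(\zeta)$; then $A_{k+1} := b_{k+1}^{-1}A_k$ lies in $\Sch$ and the order of vanishing of $\det A_{k+1}$ at $\zeta$ has strictly dropped. Sweeping through the zeros of $\det A$ so that each one is exhausted before moving on produces a sequence $(b_k)$ whose centers $(z_k)$ form a Blaschke sequence (a submultiset of the zeros of $\det A$). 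The partial products $B_k = b_1\cdots b_k$ are contractive, so $\|B_k\|_\infty \le 1$; together with the local estimate $\|I-b_k(z)\|\le \frac{1+r}{1-r}(1-|z_k|)$ for $|z|\le r < 1$ established in the proof of Theorem~\ref{bpconv}, Theorem~\ref{thm:prodconv} shows that $B_k$ converges uniformly on compacta to an analytic B.P.\ product $B$. Defining $\tilde A := \lim_k A_k = B^{-1}A$ on $\D\setminus Z$ (with $Z$ the zero set of $\det A$), contractivity of $A_k$ together with Riemann's removable-singularity theorem extend $\tilde A$ analytically to $\D$ with $\|\tilde A\|_\infty\le 1$. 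Since the accumulated rank detached at each $\zeta\in Z$ matches the order of vanishing of $\det A$ at $\zeta$, the scalar identity $\det A = \det B\cdot\det\tilde A$ forces $\det\tilde A$ to be zero-free.

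For uniqueness, let $A = B_1\tilde A_1 = B_2\tilde A_2$ be two such factorizations and set $C := B_2^{-1}B_1 = \tilde A_2\tilde A_1^{-1}$. The second expression is analytic on all of $\D$ since $\det\tilde A_1$ has no zeros, and $C^{-1} = \tilde A_1\tilde A_2^{-1}$ is likewise analytic. Any matrix $M$ with $\|M\|\le 1$ and $|\det M|=1$ is unitary (its singular values lie in $[0,1]$ and multiply to $1$), so the a.e.\ boundary values of $B_1$ and $B_2$ on $\T$ are unitary; consequently $C(e^{i\theta})$ is unitary a.e. Moreover, $\det B_1$ and $\det B_2$ are scalar Blaschke products with identical zero divisors, hence $\det C$ is a unimodular constant. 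Each entry of $C$ is in the Nevanlinna class, being a quotient of $\H^\infty$ functions with zero-free denominator, and has boundary values bounded by $1$; Smirnov's theorem then gives $C\in\H^\infty$ with $\|C\|_\infty \le 1$, and the same reasoning applied to $C^{-1}$ yields $\|C^{-1}\|_\infty \le 1$. Hence for every $z\in\D$ and $v\in\C^n$, $\|v\|=\|C^{-1}(z)C(z)v\|\le\|C(z)v\|\le\|v\|$, so $C(z)$ is a pointwise isometry, and therefore unitary. Finally, $C^*C = I$ on $\D$ combined with the Wirtinger identity $\partial_z(C^*C) = C^* C' = 0$ (valid since $C$ is holomorphic, so $\partial_z C^* = 0$) and invertibility of $C$ give $C'\equiv 0$, so $C$ is a constant unitary, whence $B_1 = B_2 C$ as required.

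The main obstacle is the existence half, specifically sequencing the potentially infinite detachments so that every zero of $\det A$ is exhausted with the correct accumulated rank, verifying convergence of the resulting B.P.\ product, and extending the quotient $\tilde A$ across the singularities via a removable-singularity argument. In the uniqueness step the technical subtlety is promoting the a.e.\ unitary boundary values of $C$ to pointwise unitarity on $\D$, which requires boundedness of both $C$ and $C^{-1}$ via Smirnov-type arguments.
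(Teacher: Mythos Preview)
Your existence argument matches the paper's: inductive detachment via Lemma~\ref{detachlemma} and convergence of the partial B.P.\ products via Theorem~\ref{thm:prodconv} and the Blaschke condition. The paper shows $A_k\to\tilde A$ directly by applying Theorem~\ref{thm:prodconv} to $\prodl b_i^{-1}$ with the estimate $\|I-b_k^{-1}(z)\|\le\tfrac{1}{1-r}(1-|z_k|)$, whereas you pass through $B^{-1}A$ on $\D\setminus Z$ and invoke removable singularities; either route works.

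Your uniqueness argument is a genuinely different approach, but it contains a gap. You say each entry of $C=\tilde A_2\tilde A_1^{-1}$ lies in the Nevanlinna class and then invoke Smirnov's theorem. Smirnov's maximum principle, however, requires membership in the \emph{Smirnov class} $N^+$, not merely $N$. A zero-free $H^\infty$ denominator need not be outer: $\det\tilde A_1$ may carry a singular inner factor, and then the quotient lies in $N\setminus N^+$. The scalar function $e^{(1+z)/(1-z)}$ is analytic on $\D$ with boundary modulus $1$ a.e.\ yet unbounded, showing that ``analytic with unitary boundary values'' does not by itself yield $\|C\|_\infty\le 1$. The gap is repairable: from $B_2C=B_1$ one gets $C_{ij}\cdot\det B_2\in H^\infty$, and since $\det B_2$ is a Blaschke product (no singular inner part) and $C_{ij}$ is analytic, the zeros cancel and $C_{ij}\in N^+$; then Smirnov applies legitimately.

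For comparison, the paper's uniqueness proof avoids Hardy-space theory entirely. It uses Lemma~\ref{detachlemma2} to show by induction that each factor $b^{(2)}_{k+1}$ of $B^{(2)}$ is detachable from $(B^{(2)}_k)^{-1}B^{(1)}$, so in the limit $(B^{(2)})^{-1}B^{(1)}\in\Sch$; by symmetry the inverse is also contractive, and then Corollary~\ref{cor:unitaryconst} gives a unitary constant. Your route, once patched, trades this inductive detachment for a boundary-value argument plus Smirnov; it is shorter in spirit but imports more machinery.
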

The uniqueness statement says that $B$ is uniquely determined as a function on $\D$, \emph{not} as a formal product. That is, the individual B.P. factors may be quite different depending on the order in which we detach the zeros.

{\em Proof of existence.} Let $z_1,z_2,\dots$ be the zeros of $\det A$ in no particular order, counted according to their multiplicities. Let $0\le r<1$. By the Blaschke condition, there can only be finitely many zeros such that $|z_i|\le r$. We now construct sequences of B.P. factors $(b_k)_k$ and functions $(A_k)_k$ in $\Sch$ by the following inductive process starting with $A_0=A$ and $k=1$:\\

If $\det A_{k-1}$ has a zero at $z_k$ then $A_{k-1}(z_k)$ has defect $0< r_k\le n$ and by singular value decomposition we obtain
\begin{align}
\label{svd}
A_{k-1}(z_k)=U_k\left(\begin{array}{cc}
0 & 0\\
0 & D\\
\end{array}\right)V_k
\end{align}
where $D$ is a $(n-r_k)\times(n-r_k)$ diagonal matrix with non-zero entries and $U_k$ and $V_k$ are unitary matrices.
Now set
\begin{align}
\label{defbkak}
b_k(z)=U_k\left(\begin{array}{cc}
\beta_{z_k}(z)I_{r_k} & 0\\
0 & I_{n-r_k}\end{array}\right)U_k^{-1}\,\,\,\mbox{and}\,\,\,
A_k(z)=b^{-1}_k(z)A_{k-1}(z)
\end{align}
for $z\in\D$. By Lemma \ref{detachlemma}, this effects the detachability of $b^{-1}_k$ from $A_{k-1}$, so $A_k\in\Sch$.\\

Now we continue from the start with $k+1$ instead of $k$, where we skip those $k$ such that $\det A_{k-1}(z_k)\not=0$, which may happen since the individual zeros occur as often as their multiplicity dictates. From the equation
$$\det A_k(z)=\beta_{z_k}(z)^{r_k} \det A_{k-1}(z)$$
we see that each zero $z_i$ will be ``consumed" eventually, i.e. there exists $N=N(i)$ such that $\det A_k(z_i)\not=0$ for $k\ge N$. Also, the process will end after finitely many steps if and only if there are finitely many zeros.
In the case of infinitely many zeros, we know from Theorem \ref{bpconv} that $B_k(z)=\prod^k_{i=1}b_i(z)$ will converge to a B.P. product $B$. We claim that also the sequence $(A_k)_k$ converges to a bounded analytic function $\tilde{A}$. For the proof note that \eqref{defbkak} implies
$$A_k(z)=\left(\prodl_{i=1}^k b_i(z)^{-1}\right)A(z)$$
Also a calculation shows that for $|z|\le r<1$
$$\|I-b_k(z)^{-1}\|=\left|1-\frac{1-\overline{w}z}{w-z}\frac{w}{|w|}\right|
=\left|\frac{w+z|w|}{w|w|-z|w|}\right|(1-|w|)\le \frac{1}{1-r}\cdot(1-|w|)$$
where $w=z_k\not=0$. Hence we can apply Theorem \ref{thm:prodconv} to conclude convergence of the partial products $A_k$ against a function $\tilde{A}\in\Sch$ which satisfies $A=B\cdot\tilde{A}$.\qed

To prove uniqueness we require the following observation.
\begin{lemma}
\label{detachlemma2}
Let $A,A_1,A_2\in\Sch$ satisfy $A(z)=A_1(z)A_2(z)$. Furthermore, suppose that $\det A_1$ has a zero at $z_0\in\D$ and $\det A_2(z_0)\not=0$. Let $b$ be a Blaschke-Potapov factor for $z_0$, which is detachable from $A$ in the sense defined above. Then it is also detachable from $A_1$.
\end{lemma}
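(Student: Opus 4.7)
The plan is to translate the detachability condition from $A$ to $A_1$ using the invertibility of $A_2(z_0)$, and then invoke the same maximum-principle argument that was used in Lemma \ref{detachlemma}.

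First I recall that, writing the B.P.\ factor as $b(z)=I-P+\beta_{z_0}(z)P$ with $P$ an orthogonal projection, the detachability condition \eqref{detachcrit2} is $\Img A(z_0)\subseteq\ker P$. So I would start from $A(z_0)=A_1(z_0)A_2(z_0)$ and use that $\det A_2(z_0)\ne 0$, hence $A_2(z_0)$ is invertible, to conclude
\[
\Img A(z_0) \;=\; A_1(z_0)\,A_2(z_0)(\C^n) \;=\; A_1(z_0)(\C^n) \;=\; \Img A_1(z_0).
\]
Combined with the hypothesis that $b$ is detachable from $A$, this gives $\Img A_1(z_0)\subseteq \ker P$, which by the same equivalence means that condition \eqref{detachcrit} (with $A$ replaced by $A_1$) holds. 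From the Taylor expansion calculation already performed in the proof of Lemma \ref{detachlemma}, this ensures that $b^{-1}A_1$ is holomorphic on $\D$ (the potential pole at $z_0$ cancels).

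It then remains to check that $b^{-1}A_1$ is actually contractive, i.e.\ lies in $\Sch$. I would reuse the maximum principle step from Lemma \ref{detachlemma} verbatim: given $\epsilon>0$, one has $\|b^{-1}(z)\|=|\beta_{z_0}(z)|^{-1}\le 1+\epsilon$ for $|z|\ge r$ with $r<1$ sufficiently close to $1$, and since $A_1\in\Sch$ is contractive, $\|b^{-1}(z)A_1(z)\|\le 1+\epsilon$ on that annulus. Because $\|b^{-1}A_1\|$ is subharmonic on $\D$ (the function is holomorphic by the previous paragraph), the maximum principle propagates the bound inward, and letting $\epsilon\to 0$ yields $\|b^{-1}(z)A_1(z)\|\le 1$ for every $z\in\D$. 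Hence $b^{-1}A_1\in\Sch$ and $b$ is detachable from $A_1$.

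The argument is essentially a one-line reduction followed by a quotation of the earlier proof; the only real content is the identity $\Img A(z_0)=\Img A_1(z_0)$, which is where the hypothesis $\det A_2(z_0)\ne 0$ is used. No genuine obstacle arises: if $A_2(z_0)$ were also singular one would have to track how the images of the two factors interact, but that case is excluded here.
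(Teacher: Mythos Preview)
Your proof is correct and follows essentially the same approach as the paper. The paper's proof is even more compressed: it uses the block-matrix form \eqref{detachcrit} rather than the image form \eqref{detachcrit2}, writing $U^*A_1(z_0)=U^*A(z_0)\,A_2(z_0)^{-1}$ and noting that right-multiplication by an invertible matrix preserves the zero top rows; your image computation $\Img A(z_0)=\Img A_1(z_0)$ is the same observation in different language. Your separate rederivation of contractivity is unnecessary, since Lemma~\ref{detachlemma} already packages that conclusion once the criterion \eqref{detachcrit}/\eqref{detachcrit2} is verified.
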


\begin{proof}The claim follows from Lemma \ref{detachlemma} and
$$
U^*A_1(z_0)=\left(\begin{array}{cc}0_r & 0\\ * & *\end{array}\right)A_2^{-1}(z_0)=\left(\begin{array}{cc}0_r & 0\\ * & *\end{array}\right)$$
\end{proof}
\noindent Now we can complete the proof of Theorem \ref{thm:bpfactor}.

{\em Proof of uniqueness.} Suppose that
$$A=B^{(1)}\cdot \tilde{A}^{(1)}=B^{(2)}\cdot \tilde{A}^{(2)}$$
are two factorizations such that for $i=1,2$, $B^{(i)}$ is a B.P. product and $\tilde{A}^{(i)}$ a contractive analytic mvf with non-vanishing determinant. Without loss of generality we write
$$B^{(i)}(z)=\prod^\infty_{j=1}b^{(i)}_j(z)\;\;\;\text{and}\;\;\;
B^{(i)}_k(z)=\prod^k_{j=1}b^{(i)}_j(z)\;\;\;(z\in\D)$$
for $i=1,2$, where the $b_j^{(i)}$ are B.P. factors. In case the $B^{(i)}$ come with constant unitary right factors, we can include those in the $\tilde{A}^{(i)}$. Clearly, $b^{(2)}_{k+1}$ is detachable from
$$B_k^{(2)}(z)^{-1} B^{(2)}(z)=\prod_{j=k+1}^\infty b^{(2)}_j(z)$$
By Lemma \ref{detachlemma2}, it is also detachable from 
$$B_k^{(2)}(z)^{-1} B^{(1)}(z)=B_k^{(2)}(z)^{-1} B^{(2)}(z)\tilde{A}^{(2)}(z)\tilde{A}^{(1)}(z)^{-1}$$
Letting $k\rightarrow\infty$ we obtain that $F=(B^{(2)})^{-1}B^{(1)}\in\Sch$. By symmetry, also $F^{-1}=(B^{(1)})^{-1}B^{(2)}$ is contractive. Hence we have $I-F(z)^{-1}F^*(z)^{-1}\ge 0$ for $z\in\D$, so also 
$$0 \le F(z)(I-F(z)^{-1}F^*(z)^{-1})F^*(z)=F(z)F^*(z)-I$$ 
But at the same time we know $I-F(z)F^*(z)\ge 0$, so $F$ must be unitary everywhere in $\D$. By Corollary \ref{cor:unitaryconst} in the appendix, $F$ is a constant unitary matrix. This concludes the proof of Theorem \ref{thm:bpfactor}.\qed

\subsubsection{Finite B.P. products}
A consequence of the above factorization is the following characterization of finite B.P. products which turns out to be the same as in the scalar case.

\begin{lemma}\index{B.P. product}
\label{lemma:finitebp}
A mvf $A\in\Sch$ is a finite Blaschke-Potapov product if and only if it extends continuously to $\overline{\D}$ and takes unitary values on $\T$.
\end{lemma}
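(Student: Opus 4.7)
\emph{Forward direction.} Each B.P.~factor has the form $b(z)=U\left(\begin{smallmatrix}\beta_{z_0}(z)I_r & 0\\ 0 & I_{n-r}\end{smallmatrix}\right)U^*$, where the scalar M\"obius transformation $\beta_{z_0}$ is continuous on $\overline\D$ (its only pole $1/\overline{z_0}$ lies outside $\overline\D$) and satisfies $|\beta_{z_0}|=1$ on $\T$. Thus each B.P.~factor extends continuously to $\overline\D$ and is unitary on $\T$, and a finite product of such factors together with unitary constants inherits both properties.

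\emph{Reverse direction, reduction.} Suppose $A\in\Sch$ extends continuously to $\overline\D$ with unitary boundary values. By Theorem \ref{thm:bpfactor} write $A=B\tilde A$ with $B$ a B.P.~product and $\tilde A\in\Sch$ whose determinant does not vanish on $\D$. I shall show (i) $B$ has only finitely many factors and (ii) $\tilde A$ is a constant unitary matrix. For (i), observe that $\det A$ is a scalar bounded analytic function on $\D$, continuous on $\overline\D$, and unimodular on $\T$. Continuity together with $|\det A|=1$ on $\T$ forces the zeros of $\det A$ to lie in a compact subset of $\D$, hence there are only finitely many (with multiplicity); dividing by the corresponding finite scalar Blaschke product and applying the maximum principle to the quotient and its reciprocal shows that $\det A$ is itself a finite scalar Blaschke product. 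The total zero-order of $\det A$ matches the cumulative rank of the B.P.~factors of $B$ (from the relation $\det A_k=\beta_{z_k}^{r_k}\det A_{k-1}$ in the existence proof of Theorem \ref{thm:bpfactor}), so $B$ has only finitely many factors.

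\emph{Upgrading $\tilde A$ to a unitary constant.} By the forward direction, the finite product $B$ is continuous on $\overline\D$ and unitary on $\T$; hence $\tilde A=B^{-1}A$ extends continuously to $\overline\D$, with $\tilde A=B^*A$ unitary on $\T$. The scalar function $\det\tilde A$ is then non-vanishing analytic on $\D$, continuous on $\overline\D$, and unimodular on $\T$; applying the maximum principle to $\det\tilde A$ and $(\det\tilde A)^{-1}$ shows that $\det\tilde A$ is a unimodular constant. In particular $\tilde A^{-1}$ exists throughout $\overline\D$, is continuous there, and is unitary on $\T$. By subharmonicity of the norm of an analytic mvf (cf.~the proof of Lemma \ref{detachlemma}) and the maximum principle, $\|\tilde A^{-1}\|\le 1$ on $\D$. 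Combined with $\|\tilde A\|\le 1$, for any unit vector $v$ one has
$$1=\|v\|=\|\tilde A^{-1}\tilde A v\|\le\|\tilde A^{-1}\|\,\|\tilde A v\|\le\|\tilde A v\|\le 1,$$
so $\tilde A(z)$ is an isometry, hence unitary, for every $z\in\D$. Corollary \ref{cor:unitaryconst} then yields that $\tilde A$ is a constant unitary matrix, and absorbing it into $B$ exhibits $A$ as a finite B.P.~product.

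\emph{Main obstacle.} The delicate step is the upgrade of $\tilde A$: its boundary behaviour must be propagated inward through three chained maximum-principle arguments---continuity on $\overline\D$ (via continuity of $B^{-1}$ on $\T$), invertibility on all of $\D$ (via the scalar maximum principle applied to $\det\tilde A$), and finally pointwise unitarity (via simultaneous contractivity of $\tilde A$ and $\tilde A^{-1}$).
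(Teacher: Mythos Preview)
Your proof is correct and follows essentially the same route as the paper: factor $A=B\tilde A$ via Theorem~\ref{thm:bpfactor}, argue finiteness of the zeros of $\det A$ from continuity and unimodularity on $\T$, then show $\tilde A$ and $\tilde A^{-1}$ are both contractive (via the maximum principle for the subharmonic norm) and hence $\tilde A$ is a unitary constant by Corollary~\ref{cor:unitaryconst}. The only notable difference is that your detour through ``$\det\tilde A$ is a unimodular constant'' is unnecessary---since $\det\tilde A$ is already non-vanishing on $\D$ and $\tilde A$ is unitary on $\T$, invertibility and continuity of $\tilde A^{-1}$ on $\overline\D$ follow directly---but it does no harm.
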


\begin{proof}
Suppose that $A\in\Sch$ extends continuously to $\overline{\D}$ and takes unitary values on $\T$. The Blaschke condition implies that $\det A$ has only finitely many zeros, since otherwise they would accumulate at some point on $\T$, which is impossible because $\|A\|=1$ on the unit circle. By Theorem \ref{thm:bpfactor} there exists a finite Blaschke-Potapov product $B$ and $\tilde{A}\in\Sch$ such that $A=B\cdot\tilde{A}$ and $\det\tilde{A}$ is non-vanishing. Hence also $\tilde{A}^{-1}\in\Sch$ and $\tilde{A}^{-1}=B^{-1}\cdot A$ also extends continuously to $\overline{\D}$ with unitary values on $\T$. In particular, $\|\tilde{A}^{-1}(z)\|=1$ for $z\in\T$. By the maximum principle for subharmonic functions, $\tilde{A}^{-1}$ is contractive.
Summarizing, we have for arbitrary $z\in\D$
$$I-\tilde{A}(z)\tilde{A}^*(z)\ge 0\,\,\,\text{and}\,\,\,I-\tilde{A}(z)^{-1}(\tilde{A}^*(z))^{-1}\ge 0$$
whence it follows that $\tilde{A}$ is unitary at $z$. This implies that $\tilde{A}$ is equal to a constant unitary matrix which proves the claim.
The other implication follows directly from the definition of a B.P. factor.
\end{proof}

\subsubsection{B.P. products for $n=2$} In the case of $2\times 2$ matrices, the B.P. factors can have only rank $1$ or $2$. B.P. factors of rank $2$ are just scalar Blaschke factors times the identity matrix. Thus we can factor out a maximal scalar Blaschke product to obtain a function which has only zeros of rank 1.

\begin{lemma}\index{B.P. product}
Let $n=2$ and $A\in\Sch$. Assume that $A$ has no zeros of rank 2. Let $z_1,z_2,\dots$ be an enumeration of the zeros of $A$ counted with multiplicities. Then there exist uniquely determined B.P. factors $b_1,b_2,\dots$ and $\tilde{A}\in\Sch$ without zeros such that $b_k(z_k)=0$ and
$$A=\prod_{k=1}^N b_k(z)\cdot \tilde{A}(z)$$
where $N\in\N\cup\{\infty\}$ is the number of zeros (including multiplicities).
\end{lemma}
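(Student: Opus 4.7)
My strategy is to invoke Theorem~\ref{thm:bpfactor} for existence and then exploit the rank-$1$ hypothesis to pin down the individual factors (rather than only their product).

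For existence, applying Theorem~\ref{thm:bpfactor} with the given enumeration $z_1, z_2, \dots$ as the order in which zeros are detached yields a convergent product $B = \prod_{k=1}^N b_k$ and a zero-free $\tilde A \in \Sch$ with $A = B \tilde A$; convergence in the infinite case is ensured by the Blaschke condition and Theorem~\ref{bpconv}. The extra content is that every $b_k$ has rank exactly~$1$. I would prove this by induction on $k$: set $A_0 = A$ and $A_k = b_k^{-1} A_{k-1}$, and assume $A_{k-1}$ has only rank-$1$ zeros. Then the defect of $A_{k-1}(z_k)$ is $1$, so the maximal-detachability construction of Theorem~\ref{thm:bpfactor} produces a rank-$1$ factor $b_k = I - P_k + \beta_{z_k} P_k$. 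To propagate the hypothesis to $A_k$, observe that $b_k^{-1}(z)$ is invertible off $z_k$, hence ranks of zeros are preserved away from $z_k$; at $z = z_k$, the identity $A_{k-1}(z_k) = b_k(z_k) A_k(z_k) = (I - P_k) A_k(z_k)$ together with $A_{k-1}(z_k) \neq 0$ forces $A_k(z_k) \neq 0$, so whenever $z_k$ remains a zero of $A_k$ it is again of rank~$1$.

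For uniqueness, let $A = \prod_{k=1}^N b_k \cdot \tilde A$ be any factorization of the stated form. I would show by induction that every $b_k$ is determined by $A$. Writing $b_1 = I - P_1 + \beta_{z_1} P_1$ for an orthogonal projection $P_1$, we have $b_1(z_1) = I - P_1$, so
\begin{equation*}
A(z_1) = (I - P_1) \cdot (b_2 \cdots b_N \tilde A)(z_1),
\end{equation*}
whence $\Img A(z_1) \subseteq \Img(I - P_1) = \ker P_1$. By the rank-$1$ hypothesis on the zeros of $A$, the subspace $\Img A(z_1)$ is $1$-dimensional; moreover $P_1$ must have rank~$1$ (a rank-$2$ $P_1$ would give $b_1 = \beta_{z_1} I$ and hence $A(z_1) = 0$, a rank-$2$ zero), so $\ker P_1$ is also $1$-dimensional and the inclusion forces $\ker P_1 = \Img A(z_1)$, pinning down $P_1$ and thus $b_1$. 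Passing to $A_1 = b_1^{-1} A = \prod_{k \geq 2} b_k \cdot \tilde A$, which again has only rank-$1$ zeros by the propagation argument of the existence part, the same reasoning applied at $z_2$ determines $b_2$, and so on. Uniqueness of $\tilde A$ is then immediate.

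The main obstacle I expect is the linear-algebra step that the ``only rank-$1$ zeros'' property is preserved under detachment of a single rank-$1$ Blaschke-Potapov factor. Once this is in hand, it simultaneously forces every detached factor in the existence construction to be rank~$1$ and enables the inductive step in uniqueness to iterate by reducing the problem for $A_1, A_2, \dots$ to the same form.
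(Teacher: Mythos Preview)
Your proposal is correct and follows the same line as the paper's argument: existence via the construction of Theorem~\ref{thm:bpfactor}, and uniqueness via the maximal detachability condition~\eqref{maxdetachcrit}, which in the rank-$1$ setting pins down each $P_k$ by $\ker P_k=\Img A_k(z_k)$. The paper's proof is essentially a one-line pointer to this formula (``This is clear in view of the above''), whereas you spell out the inductive step that the ``only rank-$1$ zeros'' property persists after detaching a rank-$1$ factor; this is exactly the content the paper suppresses, and your argument for it (invertibility of $b_k$ off $z_k$, and $A_{k-1}(z_k)=(I-P_k)A_k(z_k)\neq 0$ at $z_k$) is sound.
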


This is clear in view of the above. In fact, condition \eqref{maxdetachcrit} allows for the $b_k$ to be expressed explicitly: Let
$$b_k(z)=I-P_k+\beta_{z_k}(z)P_k$$
be the $k$th B.P. factor and $A_k$ be recursively given by
$$A_1(z)=A(z)\,\,\text{and}\,\,A_{k+1}(z)=b_k^{-1}(z)A_k(z)\,\,\text{for}\;k\ge 1$$
Then $b_k$ is determined by $\ker P_k=\Im A_k(z_k)$.

\subsection{Multiplicative representation}
The key ingredient for obtaining a multiplicative representation of a contractive analytic matrix function with non-vanishing determinant will be the following approximation theorem. The proof is from \cite[Chapter V]{Potapov}.

\begin{thm}\index{rational approximation}
\label{thm:rationalapprox}
For every $A\in\Sch$ there exists a sequence of rational contractive mvfs $(A_k)_{k\ge 1}$, unitary on $\T$, such that $A_k$ converges to $A$ uniformly on compact sets in $\D$ as $k\rightarrow\infty$.
\end{thm}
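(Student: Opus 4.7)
My plan is to proceed in three stages: a dilation reduction to functions analytic on a neighborhood of $\overline{\D}$, a polynomial approximation which strictly contracts the norm, and finally an ``inner completion'' step turning a strictly contractive polynomial into a rational mvf that is unitary on $\T$.

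\textbf{Step 1 (Dilation).} For $0<r<1$ set $A_r(z)=A(rz)$. Each $A_r$ is analytic in a neighborhood of $\overline{\D}$, still lies in $\Sch$, and $A_r\to A$ uniformly on compact subsets of $\D$ as $r\to 1$. So one may assume from the outset that $A$ extends analytically to a neighborhood of $\overline{\D}$.

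\textbf{Step 2 (Strictly contractive polynomial approximation).} For such $A$, the Taylor series converges uniformly on $\overline{\D}$. Truncate to a matrix polynomial $P_N$, then rescale by $\lambda_N\uparrow 1$ chosen so that $P:=\lambda_N P_N$ satisfies $\sup_{|z|\le 1}\|P(z)\|\le 1-\delta$ for some $\delta=\delta(N)>0$. As $N\to\infty$ we get $P\to A$ uniformly on compact subsets of $\D$. Thus it suffices to approximate any such strictly contractive matrix polynomial $P$ by a rational inner mvf.

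\textbf{Step 3 (Inner completion -- the main obstacle).} Here one has to manufacture a rational $n\times n$ mvf $\Phi$ which is contractive on $\D$, unitary on $\T$, and close to $P$ on compacts of $\D$. The idea is as follows. Because $\sup_{\T}\|P(z)\|\le 1-\delta$, the matrix-valued function $I-P(z)P(z)^*$ is strictly positive and rational (after writing $P(z)^*=z^{-N}P_\sharp(z)$ on $\T$ for the reverse polynomial $P_\sharp$). A matrix Fejér--Riesz / outer factorization then produces a rational matrix function $Q$, analytic and invertible on $\D$, with $QQ^*=I-PP^*$ on $\T$. Using $Q$ one constructs a rational mvf $\Phi$ on $\D$ which on $\T$ satisfies $\Phi\Phi^*=I$ and whose ``defect'' $\Phi-P$ is controlled by a factor vanishing to high order at the origin (or at an arbitrary preassigned point). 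Concretely, one sets $\Phi(z)=P(z)+z^m Q(z)V(z)$ for a suitable rational contraction $V$ (chosen via a Schur-type iteration) so that the unitarity identity $\Phi\Phi^*=I$ holds identically on $\T$. By Proposition~\ref{prop:mintest}-type estimates and the maximum principle, $\|\Phi-P\|$ is of order $r^m$ on $|z|\le r<1$, so letting $m\to\infty$ gives the desired approximation.

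\textbf{Where the difficulty lies.} Steps 1 and 2 are essentially routine. The hard work is Step 3, and the hardest point there is carrying out the Fejér--Riesz-type factorization of $I-PP^*$ within $n\times n$ matrices (avoiding a Darlington doubling to $2n\times 2n$), and then verifying that the resulting completion $\Phi$ is genuinely contractive \emph{inside} $\D$ rather than merely unitary on $\T$. The latter uses subharmonicity of $\|\Phi\|$ together with the boundary identity, exactly as in the proof of Lemma~\ref{lemma:finitebp}. Once $\Phi$ is built, Lemma~\ref{lemma:finitebp} identifies it as a finite Blaschke--Potapov product, providing the rational contractive mvf $A_k$ claimed in the theorem.
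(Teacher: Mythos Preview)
Your route is genuinely different from the paper's. The paper does not dilate and truncate; instead it applies a matrix Cayley transform $T(z)=i(wI-A(z))^{-1}(wI+A(z))$, observes that $\Im T\ge 0$, invokes the matrix Herglotz representation (Theorem~\ref{thm:herglotz}) to write $T$ as a Poisson--Stieltjes integral, approximates that integral by a Riemann--Stieltjes sum $T_k$, and transforms back by $A_k=w(T_k-iI)(T_k+iI)^{-1}$. The unitarity of $A_k$ on $\T$ and its contractivity on $\D$ then drop out of the identity $I-A_kA_k^*=4(T_k+iI)^{-1}(\Im T_k)(T_k^*-iI)^{-1}$ together with the explicit formula for $\Im T_k$. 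This is self-contained modulo the Herglotz theorem and never touches Fej\'er--Riesz or interpolation.

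Your Steps~1 and~2 are fine, but Step~3 as written has a real gap. The formula $\Phi=P+z^mQV$ with $QQ^*=I-PP^*$ on $\T$ does \emph{not} produce $\Phi\Phi^*=I$ for any rational contraction $V$: expanding gives $\Phi\Phi^*=I + z^m QVP^*+\bar z^{\,m}PV^*Q^*+Q(VV^*-I)Q^*$, and the cross terms $z^mQVP^*+\bar z^{\,m}PV^*Q^*$ do not vanish in general, nor are they cancelled by the last term for a free choice of contractive $V$. Saying ``chosen via a Schur-type iteration'' does not fix this; you would need to set up and solve a genuine matrix Carath\'eodory--Fej\'er (or tangential Nevanlinna--Pick) problem, showing that a strictly contractive block Toeplitz symbol admits a \emph{rational inner} interpolant of the same size $n\times n$. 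That theorem is true, but it is itself nontrivial machinery---comparable in weight to what you are trying to prove---and you have not supplied it. Without it, Step~3 is a sketch of an intention rather than a proof.

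If you want to keep your outline, the cleanest repair is to replace the ad hoc formula by a direct appeal to the matrix Schur algorithm: starting from a strictly contractive $P$ analytic near $\overline{\D}$, perform $m$ Schur steps at $z=0$ (each well-defined because all Schur parameters have norm strictly less than $1$), then terminate by substituting a constant unitary for the residual Schur function. The resulting $\Phi$ is a finite Blaschke--Potapov product matching $P$ to order $m$ at $0$, hence $\|\Phi-P\|=O(r^m)$ on $|z|\le r$. This is essentially what you gesture at, but it needs to be stated and justified rather than encoded in an incorrect ansatz.
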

By a rational mvf, we mean a matrix-valued functions the entries of which are scalar rational functions. 
Combined with Lemma \ref{lemma:finitebp}, this theorem implies that every contractive analytic matrix function can be uniformly approximated by finite Blaschke-Potapov products.

\begin{proof}
We may choose a constant $w\in\C$ with $|w|=1$ such that $\det(wI-A(0))\not=0$ since $A(0)$ can have at most $n$ distinct eigenvalues. This implies that the holomorphic function $\det(wI-A(z))$ is not identically zero, so the matrix $wI-A(z)$ is regular for all but countably many points $(\mu_j)_j$ in $\D$. Thus we may define a (possibly meromorphic) mvf by
\begin{align}
\label{Cayley}
T(z)=i(wI-A(z))^{-1}(wI+A(z))
\end{align}
This transformation is a matrix-valued analogue of a conformal mapping from the unit disk to the upper half plane which is sometimes referred to as Cayley transform\index{Cayley transform}. The inverse transformation is given by
\begin{align}
\label{CayleyInv}
A(z)=w(T(z)-iI)(T(z)+iI)^{-1}
\end{align}
A calculation shows
$$\Im T(z)=(wI-A(z))^{-1}(I-A(z)A^*(z))(\overline{w}I-A^*(z))^{-1}\ge 0$$
because $I-A(z)A^*(z)\ge 0$ by assumption. By the open mapping principle, we know that a scalar meromorphic function that maps onto the upper-half plane is actually holomorphic. The same holds for meromorphic mvfs with non-negative imaginary part, because the diagonal entries are scalar functions mapping onto the upper-half plane and and the off-diagonal entries can be bounded in terms of diagonal entries. Consequently, the singularities $(\mu_j)_j$ are removable and $T$ is holomorphic. By the Herglotz representation theorem (see the appendix for a proof of this theorem) we can write $T$ as $$T(z)=T_0+i\int^{2\pi}_0 \frac{e^{it}+z}{e^{it}-z}\,d\sigma(t)$$
where $T_0$ is a constant Hermitian matrix and $\sigma$ an increasing mvf. This integral representation allows us to approximate $T$ uniformly by Riemann-Stieltjes sums.

Choose $0<r_k<1$ for $k=1,2,\dots$ such that $r_k\nearrow 1$ as $k\rightarrow\infty$ and none of the points $\mu_j$ lies on any of the circles $\{z:|z|=r_k\}$. For each $k=1,2,\dots$ we also pick an appropriate subdivision $0\le t^{(k)}_0\le t^{(k)}_1\le \dots \le t^{(k)}_{m_k} = 2\pi$ of the interval $[0,2\pi]$ such that the Riemann-Stieltjes sum
\begin{align}
\label{Tkdef}
T_k(z)=T_0+i\sum^{m_k-1}_{\nu=0} \frac{e^{it^{(k)}_\nu}+z}{e^{it^{(k)}_\nu}-z}(\sigma(t^{(k)}_{\nu+1})-\sigma(t^{(k)}_\nu))
\end{align}
satisfies the estimate
\begin{align}
\label{TTkestimate}
\|T(z)-T_k(z)\|\le \frac{1}{k}\hspace{1cm}\text{for all}\,|z|\le r_k
\end{align}
By construction, the rational functions $(T_k)_k$ are holomorphic on $\D$ and converge to $T$ uniformly on compact sets. The meromorphic function 
\begin{align}
\label{TpIeq}
T(z)+iI=2iw(wI-A(z))^{-1}
\end{align}
is only singular at the points $(\mu_j)_j$. Hence $\det(T_k(z)+iI)$ can at most vanish at countably many points for large enough $k$. Thus it makes sense to define 
\begin{align}
\label{Akdef}
A_k(z)=w(T_k(z)-iI)(T_k(z)+iI)^{-1}
\end{align}
From \eqref{Tkdef} we see that
\begin{align}
\label{ImTkeq}
\Im T_k(z)=\sum^{m_k-1}_{\nu=0}\frac{1-|z|^2}{|e^{it^{(k)}_\nu}-z|}(\sigma(t^{(k)}_{\nu+1})
-\sigma(t^{(k)}_\nu))
\end{align}
which implies $\Im T_k(z)\ge 0$. It follows that
$$I-A_k(z)A_k^*(z)=4(T_k(z)+iI)^{-1}\cdot\Im T_k(z)\cdot(T_k^*(z)-iI)^{-1}\ge 0$$
so $A_k$ is a rational contractive mvf which has no poles in $\D$. Also, \eqref{ImTkeq} implies that $A_k(z)$ is unitary for $|z|=1$. We claim that $A_k$ converges to $A$ uniformly on compact sets. To prove this, let $K\subset\D$ be compact and $N$ large enough such that $K$ is contained in the disk $|z|\le r_N$. By choice of the $(r_k)_k$ we may select a $\delta>0$ such that none of the singularities $(\mu_j)_j$ lie in the annulus $R=\{z:r_N-\delta\le|z|\le r_N+\delta\}\subset\D$. 
Then, by the identity 
$$A(z)-A_k(z)=2i(T(z)+iI)^{-1}(T(z)-T_k(z))(T_k(z)+iI)^{-1}$$
and using \eqref{TTkestimate}, \eqref{TpIeq} we obtain 
\begin{align}
\label{AAkestimate1}
\|A(z)-A_k(z)\|\le \frac{2}{k}\cdot \|(T_k(z)-iI)^{-1}\|
\end{align}
for $z\in R$ and $k$ so large that $T_k(z)-iI$ is invertible in $R$ and $\{z:|z|~\le~r_k\}~\supset~R$. Since $T_k(z)-iI$ converges uniformly to $T(z)-iI$, the matrix norm $\|T_k(z)-iI\|$ is bounded uniformly in $k$ and $z$. By the matrix norm estimate $\|A^{-1}\|~\le~\frac{\|A\|^{n-1}}{|\det A|}$ (see the appendix for a proof of this) it follows that
\begin{align}
\label{Tkinvest}
\|(T_k(z)-iI)^{-1}\|\le \frac{C_0}{|\det(T_k(z)-iI)|}
\end{align}
for some large enough $C_0>0$. Since $\det(T_k(z)-iI)\not=0$ in $R$ for large enough $k$ and also $\det(T(z)-iI)\not=0$ in $R$ we can infer that $|\det(T_k(z)-iI)|$ is bounded from below by some positive constant. Combining this with \eqref{AAkestimate1} and \eqref{Tkinvest} we get
\begin{align}
\label{AAkestimate2}
\|A(z)-A_k(z)\|\le \frac{C}{k}
\end{align}
for some large enough $C>0$ and $z\in R$. From Lemma \ref{subharmlemma} and the maximum principle for subharmonic functions we conclude that \eqref{AAkestimate2} holds throughout $\{z:|z|\le r_N+\delta\}\supset K$.
\end{proof}

We will now describe how to obtain the multiplicative representation, proceeding as in \cite[Introduction, Chapter V]{Potapov}. Let $A\in\Sch$ have no zeros. Applying Theorem \ref{thm:rationalapprox} and Lemma \ref{lemma:finitebp}, we can choose a sequence
\begin{align}
\label{Akeq}
A_k(z)=b_1^{(k)}(z)b_2^{(k)}(z)\cdots b_{m_k}^{(k)}(z)U_k
\end{align}
where the $b_j^{(k)}$, $1\le j\le m_k$ are B.P. factors and the $U_k$ unitary matrices, such that $A_k\rightarrow A$ uniformly on compact subsets. Let also
\begin{align}
\label{bjkequ1}
b_j^{(k)}(z) &= U^{(k)}_j\left(\begin{array}{cc}\beta_{z^{(k)}_j}(z) I_{r^{(k)}_j} & 0\\0 & I_{n-r^{(k)}_j}\end{array}\right)(U^{(k)}_j)^*
\end{align}
with $U^{(k)}_j$ unitary and\footnote{We tacitly suppose that $z^{(k)}_j\not=0$. This is true anyway for large enough $k$.} $z^{(k)}_j=\rho^{(k)}_j e^{i\theta^{(k)}_j},\,j=1,2,\dots,m_k$ with $\rho^{(k)}_j>0$ and $0\le\theta^{(k)}_j<2\pi$. We may assume that the $z^{(k)}_j$ are arranged in order of increasing $\theta^{(k)}_j$. Now define
\begin{align}
\label{Hdef}
H^{(k)}_j=U^{(k)}_j\left(\begin{array}{cc} (1-|z_j^{(k)}|)I_{r^{(k)}_j} & 0\\0 & 0\end{array}\right)(U^{(k)}_j)^*
\end{align}
Notice that $\|H_j^{(k)}\|=1-|z_j^{(k)}|$. Now \eqref{betadef}, \eqref{bjkequ1} and \eqref{Hdef} imply
\begin{align}
\label{bjkequ2}
b_j^{(k)}(z)=I-\frac{z_j^{(k)}+|z_j^{(k)}|z}{z_j^{(k)}-|z_j^{(k)}|^2 z}H_j^{(k)}
\end{align}
The sequence $(\det A_k(0))_k$ converges, hence it is bounded by some constant $L>0$. Since
$$\prod^{m_k}_{\nu=1}|z_\nu^{(k)}|^{r_\nu^{(k)}}=\det A_k(0)$$
there exists a constant $C>0$ such that
\begin{align}
\label{Cinequ}
\sum^{m_k}_{\nu=1}(1-|z_\nu^{(k)}|)\le C
\end{align}
We would like to write the right hand side of \eqref{Akeq} as a multiplicative integral. However, this is not possible because multiplicative integrals are invertible everywhere while $\det A_k(z)$ has zeros. To remedy this situation, we consider the modified factors
\begin{align}
\label{bjktildedef}
\tilde{b}_j^{(k)}(z)=\exp\left(-\frac{e^{i\theta_j^{(k)}}+z}{e^{i\theta_j^{(k)}}-z}H_j^{(k)}\right)=\exp(h_z(\theta_j^{(k)})H_j^{(k)})
\end{align}
and accordingly
\begin{align}
\label{Aktildedef}
\tilde{A}_k(z)=\tilde{b}_1^{(k)}(z)\cdots\tilde{b}_{m_k}^{(k)}(z)U_k
\end{align}
We now show that the non-vanishing of $\det A$ implies that we may work with $\tilde{A}_k$ instead.
\begin{lemma}
\label{lemma:Atildelemma}
With $A_k,\tilde{A_k}$ given as above, we have that $\|A_k-\tilde{A}_k\|\rightarrow 0$ uniformly on compact sets in $\D$.
\end{lemma}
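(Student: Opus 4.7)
The plan is to use the telescoping identity (Lemma~\ref{lemma:telescoping}) to write
\begin{align*}
A_k-\tilde{A}_k=\sum_{l=1}^{m_k}\Bigl(\prodr_{j=1}^{l-1}b_j^{(k)}\Bigr)(b_l^{(k)}-\tilde{b}_l^{(k)})\Bigl(\prodr_{j=l+1}^{m_k}\tilde{b}_j^{(k)}\Bigr)U_k,
\end{align*}
and then reduce the claim to (i) uniform control of the partial products, (ii) an explicit quadratic bound $\|b_l^{(k)}(z)-\tilde{b}_l^{(k)}(z)\|\le C_K(1-|z_l^{(k)}|)^2$ on any compact $K\subset\D$, and (iii) the fact that, since $A$ has no zeros, $\max_l(1-|z_l^{(k)}|)\to 0$ as $k\to\infty$. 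Uniform boundedness of the partial products on $K$ is easy: $b_j^{(k)}\in\Sch$ gives $\|b_j^{(k)}(z)\|\le 1$, while the bound $|h_z(\theta)|\le M_K:=(1+r)/(1-r)$ with $r=\sup_{z\in K}|z|$ together with~\eqref{Cinequ} implies $\prod_j\|\tilde{b}_j^{(k)}(z)\|\le e^{M_K C}$ on $K$, uniformly in $k$.

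For step (ii) I would bring both factors into the common model form $I+h_z(\theta_l^{(k)})H_l^{(k)}$ and bound each remainder. Writing $z_l^{(k)}=|z_l^{(k)}|e^{i\theta_l^{(k)}}$ simplifies~\eqref{bjkequ2} to $b_l^{(k)}(z)=I-\frac{e^{i\theta_l^{(k)}}+z}{e^{i\theta_l^{(k)}}-|z_l^{(k)}|z}H_l^{(k)}$, and a direct subtraction gives
\begin{align*}
b_l^{(k)}(z)-\bigl(I+h_z(\theta_l^{(k)})H_l^{(k)}\bigr)=\frac{z(e^{i\theta_l^{(k)}}+z)(1-|z_l^{(k)}|)}{(e^{i\theta_l^{(k)}}-z)(e^{i\theta_l^{(k)}}-|z_l^{(k)}|z)}\,H_l^{(k)},
\end{align*}
whose norm on $K$ is at most a constant times $(1-|z_l^{(k)}|)\cdot\|H_l^{(k)}\|=(1-|z_l^{(k)}|)^2$. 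For $\tilde{b}_l^{(k)}$, the standard exponential remainder estimate together with $\|h_z(\theta_l^{(k)})H_l^{(k)}\|\le M_K(1-|z_l^{(k)}|)$ gives $\|\tilde{b}_l^{(k)}(z)-(I+h_z(\theta_l^{(k)})H_l^{(k)})\|\le e^{M_K(1-|z_l^{(k)}|)}-1-M_K(1-|z_l^{(k)}|)=O((1-|z_l^{(k)}|)^2)$. The triangle inequality closes the factor-wise estimate, and combined with the partial-product bounds and the telescoping identity yields
\begin{align*}
\sup_{z\in K}\|A_k(z)-\tilde{A}_k(z)\|\le M\sum_{l=1}^{m_k}(1-|z_l^{(k)}|)^2\le MC\cdot\max_l(1-|z_l^{(k)}|).
\end{align*}

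The remaining step (iii) uses the zero-freeness of $A$. On any compact $K_0\subset\D$, $|\det A|\ge c>0$ for some $c$, so uniform convergence $A_k\to A$ on $K_0$ implies $|\det A_k|\ge c/2>0$ on $K_0$ for all large $k$; since the $z_l^{(k)}$ are exactly the zeros of $\det A_k$, none of them lie in $K_0$ for such $k$. Taking $K_0=\{|z|\le 1-\epsilon\}$ for arbitrary $\epsilon>0$ forces $\max_l(1-|z_l^{(k)}|)<\epsilon$ for $k$ large. The main obstacle is ensuring the per-factor estimate is genuinely quadratic and not merely linear in $1-|z_l^{(k)}|$: only the first power sum $\sum_l(1-|z_l^{(k)}|)$ is a priori bounded, and it is precisely the ability to extract one vanishing factor of $\max_l(1-|z_l^{(k)}|)$ from the square that lets the limit $k\to\infty$ force convergence to zero.
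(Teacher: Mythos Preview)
Your proposal is correct and follows essentially the same approach as the paper: telescoping, an intermediate linear model $c_l^{(k)}(z)=I+h_z(\theta_l^{(k)})H_l^{(k)}$, a quadratic estimate $\|b_l^{(k)}-\tilde b_l^{(k)}\|=O((1-|z_l^{(k)}|)^2)$ from the two remainders, and the zero-freeness of $A$ to force $\max_l(1-|z_l^{(k)}|)\to 0$. The only cosmetic differences are that you bound the $b_j^{(k)}$-partial products by~$1$ via $b_j^{(k)}\in\Sch$ (the paper instead uses \eqref{bjkest} for both factors) and that you write out the explicit rational expression for $b_l^{(k)}-c_l^{(k)}$ where the paper just bounds the scalar coefficient.
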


\begin{proof}
Let $0<r<1$ and let $k$ be large enough such that none of the points $(z_j^{(k)})_j$ lie in the disk $\{z:|z|\le r\}$. By \eqref{bjkequ2} we can estimate
\begin{align}
\label{bjkest}
\|b_j^{(k)}(z)\|\le 1+\frac{1+r}{1-r}\|H_j^{(k)}\|\le e^{\frac{1+r}{1-r}\|H_j^{(k)}\|}=e^{\frac{1+r}{1-r}(1-|z_j^{(k)}|)}
\end{align}
for $|z|\le r$. The same conclusion holds with $b_j^{(k)}$ replaced by $\tilde{b}_j^{(k)}$. Now the telescoping identity\index{telescoping identity} from Lemma \ref{lemma:telescoping} implies via \eqref{bjkest} and \eqref{Cinequ} that
\begin{align}
\label{AAtildeest}
\|A_k(z)-\tilde{A}_k(z)\|=e^{C\frac{1+r}{1-r}}\sum_{\nu=1}^{m_k}\|b_\nu^{(k)}(z)-\tilde{b}^{(k)}_\nu(z)\|
\end{align}
for $|z|\le r$. Define
$$c_j^{(k)}(z)=I-\frac{e^{i\theta_j^{(k)}}+z}{e^{i\theta_j^{(k)}}-z}H_j^{(k)}$$
We write
$$
\|b_j^{(k)}(z)-\tilde{b}^{(k)}_j(z)\|\le \|b_j^{(k)}(z)-c_j^{(k)}(z)\|
+\|c_j^{(k)}(z)-\tilde{b}^{(k)}_j(z)\|
$$
and estimate the two differences separately. First,
\begin{align}
\label{bjkcjkest}
\|b_j^{(k)}(z)-c_j^{(k)}(z)\| &= \left|
\frac{e^{i\theta}+z}{e^{i\theta}-\rho z}-
\frac{e^{i\theta}+z}{e^{i\theta}-z}
\right|\cdot \|H_j^{(k)}\|\notag\\
&\le 2\frac{1-\rho}{(1-r)^2}\|H_j^{(k)}\|=2\left(\frac{1-|z_j^{(k)}|}{1-r}\right)^2
\end{align}
where $\rho=\rho_j^{(k)}=|z_j^{(k)}|$, $\theta=\theta_j^{(k)}$ and $|z|\le r$. Secondly,
\begin{align}
\label{cbtildeest}
\|c_j^{(k)}(z)-\tilde{b}_j^{(k)}(z)\| &=\left\|\sum_{\nu=2}^\infty \frac{(h_z(\theta_j^{(k)})H_j^{(k)})^\nu}{\nu!}\right\|\le \sum_{\nu=2}^\infty 
\frac{1}{\nu!}\left(\frac{1+r}{1-r}\right)^\nu \|H_j^{(k)}\|^\nu\notag\\
&\le 4e^{C\frac{1+r}{1-r}}\left(\frac{1-|z_j^{(k)}|}{1-r}\right)^2
\end{align}
where $|z|\le r$.
Setting
$$
M=M(r)=\frac{2}{(1-r)^2}\cdot e^{C\frac{1+r}{1-r}}\max\{1,2e^{C\frac{1+r}{1-r}}\}
$$
and applying \eqref{bjkcjkest}, \eqref{cbtildeest} to \eqref{AAtildeest}, we see
\begin{align}
\label{AAtildeest2}
\|A_k(z)-\tilde{A}_k(z)\|\le M \sum_{\nu=1}^{m_k} (1-|z_\nu^{(k)}|)^2 \le CM \max_{\nu=1,\dots,m_k} (1-|z_\nu^{(k)}|)
\end{align}
in the disk $\{z:|z|\le r\}$. Since $A_k\rightarrow A$ and $\det A$ has no zeros in $\D$, the right hand side of \eqref{AAtildeest2} converges to $0$ as $k\rightarrow\infty$.
\end{proof}
Now we will write \eqref{Aktildedef} as a multiplicative Stieltjes integral on some interval $[0,L]$. Define $t_0^{(k)}=0$ and
\begin{align}
\label{tjkdef}
t_j^{(k)}=\sum^{j}_{\nu=1}\tr H_\nu^{(k)}\;\;\;\text{for}\;j=1,\dots,m_k
\end{align}
From \eqref{tjkdef} and \eqref{Hdef} we see
\begin{align*}
t_j^{(k)}\le\sum_{\nu=1}^{m_k} r_\nu^{(k)}(1-|z_\nu^{(k)}|)<
\prod^{m_k}_{\nu=1}|z_\nu^{(k)}|^{r_\nu^{(k)}}=\det A_k(0)\le L
\end{align*}
Finally, we define
\begin{align}
\label{Ekdef}
E^{(k)}(t)=\left\{\begin{array}{ll}\sum^{j-1}_{\nu=1} H^{(k)}_\nu + \frac{t-t^{(k)}_{j-1}}{t_j^{(k)}-t_{j-1}^{(k)}}H_j^{(k)}, & \text{if}\;t_{j-1}^{(k)}\le t<t_j^{(k)}\;\text{for some}\;j=1,\dots,m_k\\
\sum^{m_k}_{\nu=1}H_\nu^{(k)}, & \text{if}\;t_{m_k}^{(k)}\le t\le L\end{array}\right.\\
\label{thetadef}
\theta^{(k)}(t)=\theta^{(k)}_j\;\text{for}\;t_{j-1}^{(k)}\le t<t_j^{(k)}\;\;\text{and}\;\;\theta^{(k)}(t)=\theta^{(k)}_{m_k}\;\text{for}\;
t_{m_k}^{(k)}\le t\le L
\end{align}
$E^{(k)}$ is chosen such that it is an increasing mvf on $[0,L]$ and the equation
$$\tr E^{(k)}(t)=t$$
is satisfied for $t\in[0,t_{m_k}]$. Note also that for $0\le s\le t\le L$ we have
$$\|E^{(k)}(t)-E^{(s)}(s)\|\le \tr (E^{(k)}(t)-E^{(k)}(s)) \le t-s$$
so $E^{(k)}$ is Lipschitz continuous.
The function $\theta^{(k)}:[0,L]\rightarrow[0,2\pi)$ is increasing and right-continuous. Thus, $h_z(\theta^{(k)}(t))$ is Riemann integrable on $[0,L]$ as a function of $t$.  By Proposition \ref{prop:mintconst}, \eqref{bjktildedef} and \eqref{Ekdef} we can write
$$
\tilde{b}_j^{(k)}(z) = \mint_{t_{j-1}^{(k)}}^{t_j^{(k)}}
\exp\left(\frac{h_z(\theta^{(k)}(t))}{t_j^{(k)}-t_{j-1}^{(k)}}H_j^{(k)}dt\right)
=\mint_{t_{j-1}^{(k)}}^{t_j^{(k)}} 
\exp\left(h_z(\theta^{(k)}(t))\,dE^{(k)}(t)\right)$$
for $j=1,\dots,m_k$. We write all the factors on the right hand side of \eqref{Aktildedef} in a single multiplicative integral using Proposition \ref{prop:mintsep} to obtain \begin{align}
\label{Akmint}
\tilde{A}_k(z)=\mint^L_0 \exp\left(h_z(\theta^{(k)}(t))\,dE^{(k)}(t)\right)
\end{align}
Now we apply Helly's selection Theorem\index{Helly's selection theorem} \ref{thm:hellyselmvf} twice to extract a common subsequence such that both, $(E^{(k_j)})_j$ and $(\theta^{(k_j)})_j$, converge to respective limit functions $E$ and $\theta$. Then $E$ is an increasing mvf with $\|E(t)-E(s)\|\le t-s$ for $t\ge s$. Moreover, $\theta$ is a bounded increasing function. In particular, it has only countably many discontinuities. Thus $h_z(\theta(t))$ is Riemann integrable in $t$ on $[0,L]$. By Helly's convergence theorem\index{Helly's convergence Theorem} for multiplicative integrals (Theorem \ref{thm:hellyconvmvf}) we conclude that
$$\lim_{j\rightarrow\infty}\tilde{A}_{k_j}(z)= \mint_0^L \exp(h_z(\theta(t))\,dE(t))$$
But by Lemma \ref{lemma:Atildelemma}, $\tilde{A}_{k_j}(z)$ converges also to $A(z)$. Consequently,

$$A(z)=\mint_0^L \exp(h_z(\theta(t))\,dE(t))$$

Should $\theta$ not be right-continuous we can change its values at the discontinuities such that it will be right-continuous. Since this changes $\theta$ only at countably many places, the value of the integral stays the same.

We have obtained the desired multiplicative representation and thereby finished the proof of Theorem \ref{thm:potapov}.

\section{Inner-outer factorization}
\label{sect:innerouterfact}

\subsection{Existence}

 The inner-outer factorization of a bounded analytic matrix-valued function was discovered by Y.P. Ginzburg in \cite{Ginzburg}, where he also indicates the basic steps for the proof. It should be noted that these factorization theorems can be achieved in a much more general setting without the use of multiplicative integrals. This was done for example by Sz.-Nagy and Foias in \cite{Sz.-Nagy}. However, their treatment is quite abstract and our goal here are  specifically the explicit multiplicative representations.

Let us first remark that for a bounded analytic matrix function $A$, it makes sense to speak of its values almost everywhere on the circle, defined for instance radially by
$$A(e^{i\theta})=\lim_{r\rightarrow 1-}A(re^{i\theta})$$
This limit exists for a.e. $\theta\in[0,2\pi)$ since the components of $A$ are bounded analytic scalar functions. We will denote the radial limit\index{radial limit} function also by $A$.

\begin{definition}\label{def:inner}
A function $A\in\H^\infty$ is called \defin{inner}\index{inner function}, if $A$ is unitary almost everywhere on $\T$. An inner function without zeros is a \defin{singular inner}\index{singular inner function} function. 
\end{definition}
By the maximum principle for subharmonic functions applied to $\|A\|$, inner functions are contractive.

In the scalar case, we can split up singular inner functions further with respect to the decomposition of the corresponding singular measure into pure point and singular continuous components. In the matrix-valued case this leads to the following definitions.
\begin{definition}
A mvf $A\in\H^\infty$ is called \defin{pp-inner}\index{pp-inner} (short for pure point inner), if there exist a unitary constant $U$, $m\in\N\cup\{\infty\}$, $l_k>0$, $\theta_k\in[0,2\pi)$ and increasing mvfs $E_k$ with $\tr E_k(t)=t$ such that
\begin{align}
\label{eqn:ppinner}
A(z)=U\prod_{k=1}^m \mint_0^{l_k} \exp(h_z(\theta_k)\,dE_k(t))
\end{align}
for $z\in\D$.
\end{definition}

A pp-inner function is inner. It suffices to check this for the case that $A(z)~=~\mint_0^{l} \exp(h_z(\theta)\,dE(t))$ for a constant $\theta\in[0,2\pi)$ and an increasing mvf $E$. By Proposition \ref{prop:mintgram} we have
$$A(z)A^*(z)=\mint_0^{l} \exp(2\Re h_z(\theta)\,dE(t))$$
Set $z=re^{i\varphi}$. If $\varphi\not=\theta$, then $\lim_{r\rightarrow 1-}\Re h_z(\theta)=0$. Therefore, the radial limit $A(e^{i\varphi})$ is unitary. 

We call a mvf \defin{singular continuous}\index{singular continuous} if it is nonconstant, continuous, increasing and has derivative $0$ almost everywhere.

\begin{definition}
A mvf $A\in\H^\infty$ is called \defin{sc-inner}\index{sc-inner} (short for singular continuous-inner), if there exist a unitary constant $U$ and a singular continuous mvf $S$ such that
\begin{align}
\label{eqn:scinner}
A(z)=U\mint_0^{2\pi} \exp(h_z(\varphi)dS(\varphi))
\end{align}
for $z\in\D$.
\end{definition}

Let us check that an sc-inner function is really an inner function. Applying Proposition \ref{prop:mintgram} and Proposition \ref{prop:minttaylorest} we have

$$A(z)A^*(z)=I+2\int_0^{2\pi} \Re h_z(\varphi) dS(\varphi)+R$$
where the error term $R$ satisfies the estimate  \eqref{eqn:minttayloresterr}. But

$$\left\|\int_0^{2\pi} \Re h_z(\varphi) dS(\varphi)\right\|\le \int_0^{2\pi} |\Re h_z(\varphi)| d|S|(\varphi)$$

The estimate \eqref{eqn:hellyselmvfpf1} applied to $S$ gives

$$\int_0^{2\pi} |\Re h_z(\varphi)| d|S|(\varphi)\le \int_0^{2\pi} |\Re h_z(\varphi)| d\tr S(\varphi)$$

Combining the last three estimates with \eqref{eqn:minttayloresterr} we get

\begin{align}
\label{eqn:scinnerisinnereq1}
\|I-A(z)A^*(z)\| \le \exp\left(\int_0^{2\pi} |\Re h_z(\varphi)| d\tr S(\varphi)\right)-1
\end{align}

Now notice that $\tr S$ is a scalar singular continuous function. Set $z=re^{i\theta}$. By the scalar theory (compare \cite{Rudin2}), the left hand side of \eqref{eqn:scinnerisinnereq1} converges to $0$ for almost every $\theta$ as $r$ approaches $1$. We conclude that sc-inner functions are really inner functions.

\begin{definition}
We call $A\in\H^\infty$ \defin{outer}\index{outer function} if
\begin{align}
\label{eqn:outer}
A(z)=U\mint^{2\pi}_0 \exp(h_z(\varphi)M(\varphi)\,d\varphi)
\end{align}
where $U$ is a unitary constant and $M\in L^1([0,2\pi]; M_n)$ a Hermitian mvf, whose least eigenvalue is bounded from below.
\end{definition}

Later, we will give equivalent characterizations for outer functions. Let us remark that all these definitions agree in the case $n=1$ with the corresponding scalar concepts.

Our goal is a canonical factorization of any $A\in\H^\infty$ into a B.P. product, a pp-inner, an sc-inner and an outer function. To achieve this, we will manipulate the multiplicative integral representation obtained from Potapov's fundamental Theorem \ref{thm:potapov}. We need two lemmas.

The first one says roughly that we can commute two functions in $\Sch$ as long as we only care about their determinants. This was given by Ginzburg, but unfortunately he did not provide a proof.
\begin{lemma}
\label{lemma:ginzburg}
Let $A_1,A_2\in\Sch$. Then there exist $\tilde{A}_1,\tilde{A}_2\in\Sch$ such that 
$$A_1\cdot A_2=\tilde{A}_2\cdot\tilde{A}_1$$
and $\det A_j=\det\tilde{A}_j$ for $j=1,2$.
\end{lemma}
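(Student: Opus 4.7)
The plan is to prove the lemma by successive reduction: first for a single Blaschke-Potapov factor, then for a finite B.P. product by iteration, and finally for arbitrary $A_2\in\Sch$ by an approximation argument.

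First I would handle the case where $A_2=b$ is a single B.P. factor at some $z_0\in\D$ with orthogonal projection $P$ of rank $r$, so that $\det b=\beta_{z_0}^r$. Since $(A_1 b)(z_0)=A_1(z_0)(I-P)$, we have $\Img(A_1 b)(z_0)=A_1(z_0)\ker P$, a subspace of dimension at most $n-r$. I would choose any $(n-r)$-dimensional subspace $W\subseteq\C^n$ containing this image, let $\tilde P$ be the orthogonal projection onto $W^\perp$ (of rank $r$), and set $\tilde b(z)=I-\tilde P+\beta_{z_0}(z)\tilde P$. By construction $\Img(A_1 b)(z_0)\subseteq\ker\tilde P$, which is exactly the detachability condition \eqref{detachcrit2} of Lemma \ref{detachlemma}; thus $\tilde A_1:=\tilde b^{-1}A_1 b\in\Sch$. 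Multiplicativity of the determinant then yields $\det\tilde b=\beta_{z_0}^r=\det b$ and $\det\tilde A_1=\det A_1$, as required.

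Next I would iterate: if $A_2=b_1\cdots b_m$ is a finite B.P. product, apply the single-factor step to $(A_1,b_1)$ to obtain $A_1 b_1=\tilde b_1 A_1^{(1)}$, then to $(A_1^{(1)},b_2)$, and so on. After $m$ steps, $A_1 A_2=(\tilde b_1\cdots\tilde b_m)\cdot A_1^{(m)}$ with $\det A_1^{(m)}=\det A_1$ and $\det\tilde b_k=\det b_k$ at each stage; setting $\tilde A_2=\tilde b_1\cdots\tilde b_m$ and $\tilde A_1=A_1^{(m)}$ finishes this case. For general $A_2\in\Sch$, Theorem \ref{thm:rationalapprox} together with Lemma \ref{lemma:finitebp} supplies a sequence $(A_2^{(k)})_k$ of finite B.P. products converging to $A_2$ uniformly on compact subsets of $\D$. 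The iterative step produces $\tilde A_j^{(k)}\in\Sch$ with $A_1 A_2^{(k)}=\tilde A_2^{(k)}\tilde A_1^{(k)}$, $\det\tilde A_1^{(k)}=\det A_1$, and $\det\tilde A_2^{(k)}=\det A_2^{(k)}$. Since contractive analytic mvfs are locally uniformly bounded by $1$, Montel's theorem (applied entrywise) lets me extract a common subsequence along which $\tilde A_1^{(k)}$ and $\tilde A_2^{(k)}$ converge locally uniformly to some $\tilde A_1,\tilde A_2$, which are automatically contractive; the product equation and the determinant identities pass to the limit, and since $\det\tilde A_j=\det A_j\not\equiv 0$ we conclude $\tilde A_j\in\Sch$.

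I expect the only real delicacy to lie in the single-factor step, namely the non-uniqueness of $\tilde P$ when $A_1(z_0)$ fails to be injective on $\ker P$. Because the lemma demands only existence of some such $\tilde A_1,\tilde A_2$ and makes no uniqueness claim, any valid choice of $\tilde P$ suffices, and no compatibility of these choices across the iteration or the subsequential limit needs to be enforced, so this potential obstacle evaporates.
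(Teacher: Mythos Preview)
Your proof is correct and follows essentially the same strategy as the paper: approximate $A_2$ by finite B.P. products via Theorem~\ref{thm:rationalapprox} and Lemma~\ref{lemma:finitebp}, detach these from $A_1A_2^{(k)}$ on the left, and pass to a subsequential limit with Montel. The one substantive difference is that the paper disposes of the finite-product step in a single line (``detach a B.P. product corresponding to the zeros of $A_2^{(k)}$'') after assuming without loss of generality that $A_1$ and $A_2^{(k)}$ share no zeros, whereas you give an explicit single-factor construction that works regardless of whether $A_1(z_0)$ is singular; this makes your argument slightly more self-contained and sidesteps having to justify that WLOG assumption.
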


\begin{proof}
By Theorem \ref{thm:rationalapprox} and Lemma \ref{lemma:finitebp}, we can choose a sequence of finite B.P. products
$$A_2^{(k)}(z)=b^{(k)}_1(z)\cdots b^{(k)}_{m_k}(z)$$
such that $A_2^{(k)}$ converges to $A_2$ uniformly on compact sets. Let us assume without loss of generality that $A_1$ and $A_2^{(k)}$ have no common zeros. From the mvf $A_1A_2^{(k)}$, we detach a B.P. product corresponding to the zeros of $A_2^{(k)}$ which we call $\tilde{A}_2^{(k)}$. That is, we obtain
\begin{align}
\label{ginzburglemma1}
A_1 A_2^{(k)}=\tilde{A}_2^{(k)} \tilde{A}_1^{(k)}
\end{align}
where $\tilde{A}_2^{(k)}$ is a finite B.P. product such that $\det A_2^{(k)}=\det \tilde{A}_2^{(k)}$ and $\tilde{A}_1^{(k)}\in\Sch$ the remainder.
By Montel's theorem\index{Montel's theorem} (see Theorem \ref{thm:montel}), there exists a subsequence $(\tilde{A}^{(k_i)}_2)_i$ which converges on compact sets to some analytic mvf $\tilde{A}_2$. We also get $\tilde{A}_2\in\Sch$ and $\det\tilde{A}_2=\det A_2$.
The identity \eqref{ginzburglemma1} implies that also $\tilde{A}^{(k_m)}_1$ converges to some $\tilde{A}_1\in\Sch$ with $\det A_1=\det\tilde{A}_1$.
\end{proof}

The next lemma says that the angular function $\theta$ in Potapov's multiplicative representation \eqref{eqn:potapovrepr} is already determined by $\det A$. The point of the proof is to use uniqueness of the measures in the scalar inner-outer factorization.

\begin{lemma}
\label{lemma:theta}
Suppose that $A\in\Sch$ and
\begin{align}
\label{eqn:lemma2pf1}
\det A(z)=\exp\left(\int_0^L h_z(\theta(t))dt\right)
\end{align}
for $z\in\D$, where $L>0$ and $\theta:[0,L]\rightarrow[0,2\pi]$ is a right-continuous, increasing function. Then
$$A(z)=U\mint_0^L\exp(h_z(\theta(t))dE(t))$$
for some unitary constant $U$ with $\det U=1$ and an increasing mvf $E$ with $\tr E(t)=t$.
\end{lemma}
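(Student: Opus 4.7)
My plan is to apply Potapov's theorem (Theorem \ref{thm:potapov}) to $A$ and identify the resulting data with the hypothesized ones via uniqueness of the Herglotz–Riesz representation.

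First, since $\det A(z)=\exp(\int_0^L h_z(\theta(t))\,dt)$ is non-vanishing on $\D$, the mvf $A$ has no zeros, so the B.P.\ product in Potapov's representation contains no non-trivial factors and (by convention) reduces to a unitary constant $U$. This yields
\begin{align*}
A(z)=U\mint_0^{L'}\exp\bigl(h_z(\tilde\theta(t))\,dE(t)\bigr)
\end{align*}
for some $L'>0$, a right-continuous increasing $\tilde\theta:[0,L']\to[0,2\pi]$, and an increasing mvf $E$ with $\tr E(t)=t$. Applying the determinant formula (Proposition \ref{prop:mintdet}) together with $\tr E(t)=t$ gives
\begin{align*}
\det A(z)=\det U\cdot\exp\left(\int_0^{L'}h_z(\tilde\theta(t))\,dt\right).
\end{align*}
Equating this with the hypothesis and evaluating at $z=0$ (where $h_0\equiv-1$) yields $\exp(-L)=\det U\cdot\exp(-L')$. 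Since $|\det U|=1$ and both exponentials are positive real numbers, we conclude $L=L'$ and $\det U=1$.

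It remains to show $\tilde\theta=\theta$. With the constants identified, the two holomorphic functions $z\mapsto\int_0^L h_z(\theta(t))\,dt$ and $z\mapsto\int_0^L h_z(\tilde\theta(t))\,dt$ have equal exponentials and coincide at $z=0$, hence agree throughout $\D$. Each rewrites, via change of variables, as a Herglotz-type integral $\int_0^{2\pi}h_z(\varphi)\,d\mu(\varphi)$ (respectively against $\tilde\mu$), where $\mu,\tilde\mu$ are the pushforwards of Lebesgue measure on $[0,L]$ under $\theta,\tilde\theta$. By uniqueness of the representing measure in the Herglotz representation (applied to the holomorphic function with negative real part that these integrals define), we obtain $\mu=\tilde\mu$. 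The main obstacle I anticipate is the final step: recovering pointwise equality of $\theta$ and $\tilde\theta$ from equality of their pushforward measures. This is a standard quantile-function argument, with $\theta$ being reconstructible from $\mu$ via $\theta(t)=\inf\{s:\mu([0,s])>t\}$; the key subtlety is handling atoms of $\mu$, where $\theta$ has jump discontinuities, but monotonicity plus right-continuity force any almost-everywhere equality of two such functions to be pointwise (a point of strict inequality would propagate to an interval by right-continuity, contradicting a.e.\ equality). Taking $U$ and $E$ from Potapov's representation then completes the proof.
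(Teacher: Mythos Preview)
Your proposal is correct and follows essentially the same route as the paper: apply Potapov's theorem, take determinants, evaluate at $z=0$ to identify $L$ and $\det U$, then reduce the equality $\tilde\theta=\theta$ to uniqueness of the representing measure in a scalar Herglotz/inner--outer representation. The only cosmetic difference is that the paper phrases the pushforward measure as the Stieltjes measure of the generalized inverse $\theta^\dagger$ and simply asserts that $\theta\mapsto\theta^\dagger\mapsto\mu$ is injective, whereas you spell out the quantile-function recovery and the right-continuity argument; your version is slightly more explicit on that final step.
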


\begin{proof}
By Potapov's Theorem\index{Potapov's fundamental theorem} \ref{thm:potapov} we know that 
\begin{align}
\label{eqn:lemma2pf2}
A(z)=U\mint_0^{\tilde{L}} \exp(h_z(\tilde{\theta}(t))dE(t))
\end{align} 
for a right-continuous, increasing function $\tilde{\theta}$ and an increasing mvf $E$ with $\tr E(t)=t$. Taking the determinant we see
\begin{align}
\label{eqn:lemma2pf3}
\det A(z)=\det(U)\exp\left(\int_0^{\tilde{L}} h_z(\tilde{\theta}(t))dt\right)
\end{align}
Plugging in the value $z=0$ and comparing with \eqref{eqn:lemma2pf1} yields $\det U=1$ and $\tilde{L}=L$.
Now it suffices to show that $\theta$ in the representation \eqref{eqn:lemma2pf1} is uniquely determined. 

By a change of variables we get
\begin{align}
\label{eqn:thetauniquepf2}
\int^{L}_0 h_z(\theta(t))\,dt=\int^{2\pi}_0 h_z(\varphi)\,d\theta^\dagger(\varphi)
\end{align}
where $\theta^\dagger$ is the left-continuous generalized inverse\index{generalized inverse} of $\theta$ given by $\theta^\dagger(\varphi)=\inf\{t\in[0,L]\,:\,\theta(t)\ge \varphi\}$. Let $\mu$ be the unique positive Borel measure such that $\int_0^{2\pi} f(\varphi) d\theta^\dagger(\varphi)=\int_0^{2\pi} f(\varphi) d\mu(\varphi)$ for all continuous functions $f$. Note that the map $\theta\mapsto \theta^\dagger\mapsto \mu$ is injective.

But by the scalar inner-outer factorization we know that the measure $\mu$ in
$$\det A(z)=\exp\left(\int_0^{2\pi} h_z(\varphi) d\mu(\varphi)\right)$$
is uniquely determined (see \cite{Garnett}, \cite{Rudin2}). Therefore also $\theta$ in \eqref{eqn:lemma2pf1} is uniquely determined.

\end{proof}

\begin{cor}
\label{cor:thetacont}
Let $A\in\Sch$ and assume
$$\det A(z)=\exp\left(\int_0^{2\pi} h_z(\varphi) d\psi(\varphi)\right)$$
for some continuous increasing function $\psi$. Then
$$A(z)=U\mint_0^{2\pi} \exp(h_z(\varphi) dE(\psi(\varphi))$$
for some unitary constant $U$ with $\det U=1$ and an increasing mvf $E$ satisfying $\tr E(t)=t$.
\end{cor}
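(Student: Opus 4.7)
The plan is to reduce the corollary to Lemma~\ref{lemma:theta} via a scalar change of variables, then convert the resulting representation into the claimed form via a matrix-valued change of variables. Without loss of generality assume $\psi(0) = 0$ (only $d\psi$ enters the hypothesis) and write $L = \psi(2\pi)$. Let $\theta = \psi^\dagger$ on $[0, L]$, modified at its (countably many) jump points so as to be right-continuous; then $\theta$ is right-continuous and increasing from $[0, L]$ into $[0, 2\pi]$. Continuity of $\psi$ makes the scalar change-of-variables formula
\[
\int_0^{2\pi} h_z(\varphi)\, d\psi(\varphi) = \int_0^L h_z(\theta(t))\, dt
\]
immediate, and the hypothesis becomes $\det A(z) = \exp(\int_0^L h_z(\theta(t))\,dt)$. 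Lemma~\ref{lemma:theta} now supplies a unitary $U$ with $\det U = 1$ and an increasing mvf $E$ on $[0, L]$ with $\tr E(t) = t$ such that $A(z) = U \mint_0^L \exp(h_z(\theta(t))\,dE(t))$.

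With this $E$, it remains to verify the identity
\[
\mint_0^L \exp(h_z(\theta(t))\,dE(t)) = \mint_0^{2\pi} \exp(h_z(\varphi)\,dE(\psi(\varphi))).
\]
Both integrals exist: $E$ is Lipschitz with constant $1$ (since $\|\Delta E\| \le \tr \Delta E$ by positivity), so $E \circ \psi$ is continuous and of bounded variation, and Proposition~\ref{prop:mintexist2} applies. If $\psi$ were strictly increasing the identity would follow at once from the change of variables proposition in Section~\ref{sect:multint}, but that proposition explicitly fails in the presence of intervals of constancy. I would prove the identity directly by matching Riemann sums: given a partition $0 = \varphi_0 < \cdots < \varphi_m = 2\pi$ of $[0, 2\pi]$ with tags $\eta_j$, set $t_j = \psi(\varphi_j)$; then factors of the right-hand Riemann sum $R' = \prod_j \exp(h_z(\eta_j)(E(t_j) - E(t_{j-1})))$ corresponding to indices $j$ on which $\psi$ is constant (equivalently $t_{j-1} = t_j$) collapse to $I$. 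After dropping these, the surviving nodes form a genuine partition of $[0, L]$ on which a matching Riemann sum $R$ for the left-hand integral is computed.

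The principal technical obstacle is handling tags $\eta_j$ that lie inside a constancy interval $[\alpha, \beta]$ of $\psi$ when the partition interval only partially overlaps $[\alpha,\beta]$: then $\theta(\psi(\eta_j)) = \alpha$, but $\eta_j - \alpha$ need not be small. To control this I would pre-insert into the partition the endpoints of all constancy intervals of $\psi$ of length at least $\delta > 0$ (these are finitely many). For any remaining ``bad'' tag the enclosing constancy interval has length $< \delta$, so $|h_z(\eta_j) - h_z(\theta(\psi(\eta_j)))| \le \omega_{h_z}(\delta)$. Using the telescoping identity (Lemma~\ref{lemma:telescoping}) and the a~priori bound $\sum_j \|\Delta_j E\| \le \tr(E(L) - E(0)) = L$ together with uniformly bounded partial products (Proposition~\ref{prop:mintest}), the error $\|R - R'\|$ is majorized by a constant times $\omega_{h_z}(\delta)$. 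Letting first the mesh and then $\delta$ tend to zero yields the desired equality.
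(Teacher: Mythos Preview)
Your reduction to Lemma~\ref{lemma:theta} via the scalar change of variables is exactly what the paper does. The divergence is in the final step. You worry that the change-of-variables proposition fails because $\psi$ may have intervals of constancy, and so you build a direct Riemann-sum matching argument. That argument is sound as sketched, but it is unnecessary: the proposition is to be applied with $\varphi = \theta$, not with $\varphi = \psi$. Since $\psi$ is \emph{continuous}, its generalized inverse $\theta = \psi^\dagger$ is \emph{strictly} increasing (jumps of $\theta$ correspond to constancy intervals of $\psi$, while constancy intervals of $\theta$ would correspond to jumps of $\psi$, of which there are none). The proposition only requires $\varphi$ to be strictly increasing---jumps are explicitly allowed---together with $f = h_z$ continuous and $E$ continuous increasing, all of which hold here. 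One then reads off
\[
\mint_0^L \exp\bigl(h_z(\theta(t))\,dE(t)\bigr) \;=\; \mint_0^{2\pi} \exp\bigl(h_z(s)\,dE(\theta^\dagger(s))\bigr),
\]
and $\theta^\dagger = (\psi^\dagger)^\dagger = \psi$ since $\psi$ is continuous. This is the paper's two-line proof. Your hand-built argument recovers the same identity and in effect reproves the change-of-variables proposition in this special case; what the paper's route buys is simply the observation that continuity of $\psi$ forces strict monotonicity of $\theta$, so no extra work is needed.
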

\begin{proof}
Since $\psi$ is continuous, it is the generalized inverse of some right-continuous and strictly increasing $\theta$. Changing variables, applying Lemma \ref{lemma:theta} and changing variables again yields the claim.
\end{proof}

The following observation lies in the same vein.

\begin{lemma}
\label{lemma:detinnerouter}
Let $A\in\Sch$. Then $A$ is outer (resp. sc-inner) if and only if $\det A$ is outer (resp. sc-inner).
\end{lemma}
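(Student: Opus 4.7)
The plan is as follows. The forward directions should follow straight from the determinant formula (Proposition~\ref{prop:mintdet}): if $A(z) = U\mint_0^{2\pi}\exp(h_z(\varphi)M(\varphi)\,d\varphi)$ is outer, then $\det A(z) = (\det U)\exp(\int_0^{2\pi} h_z(\varphi)\tr M(\varphi)\,d\varphi)$ with $\tr M$ real-valued, integrable, and bounded below (indeed nonnegative a.e., since $A\in\Sch$ and the Lebesgue differentiation theorem applied to $-\log|\det A|$ forces this)---the defining form of a scalar outer function. Similarly, if $A(z) = U\mint_0^{2\pi}\exp(h_z(\varphi)\,dS(\varphi))$ is sc-inner with $S$ singular continuous, then $\det A(z)$ takes the analogous scalar form with integrator $\tr S$, which is immediately seen to be a scalar singular continuous function (nonconstant because a positive semidefinite matrix with zero trace is zero, and with derivative $0$ a.e.\ since $S'=0$ a.e.).

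The converses are the substantive part, and in both cases the plan is to invoke Corollary~\ref{cor:thetacont}. Consider first the outer case: assume $\det A(z) = c\exp(\int_0^{2\pi} h_z(\varphi)m(\varphi)\,d\varphi)$ for some $m \in L^1$; comparison with the Poisson integral representation of $-\log|\det A|$, together with $|\det A|\le 1$ (which follows from $A\in\Sch$), gives $m \ge 0$ a.e. Setting $\psi(\varphi) := \int_0^\varphi m(s)\,ds$ then produces a continuous, increasing, absolutely continuous scalar function, so Corollary~\ref{cor:thetacont} yields $A(z) = U\mint_0^{2\pi}\exp(h_z(\varphi)\,dE(\psi(\varphi)))$ with $\tr E(t)=t$. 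The integrator $F(\varphi) := E(\psi(\varphi))$ satisfies $\tr F = \psi$, which is absolutely continuous. To lift absolute continuity from $\tr F$ back to the mvf $F$, I would use the inequality $\|F(\varphi')-F(\varphi)\| \le \tr(F(\varphi')-F(\varphi))$, valid because the difference is positive semidefinite. Then $M := F'$ is integrable, Hermitian, and $\ge 0$ a.e., and rewriting the multiplicative Stieltjes integral with absolutely continuous integrator as a multiplicative Lebesgue integral (via the proposition in Section~\ref{multint}) produces the desired outer representation $A(z) = U\mint \exp(h_z(\varphi)M(\varphi)\,d\varphi)$.

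The sc-inner converse is structurally the same. From $\det A(z) = c\exp(\int h_z(\varphi)\,d\psi(\varphi))$ with $\psi$ scalar singular continuous, the function $\psi$ is already continuous and increasing, so Corollary~\ref{cor:thetacont} provides directly $A(z) = U\mint\exp(h_z(\varphi)\,dS(\varphi))$ with $S(\varphi) := E(\psi(\varphi))$. Here $S$ is increasing, continuous, and nonconstant, and $\tr S = \psi$ has derivative $0$ a.e. Since $S$ is BV (as an increasing mvf), $S'$ exists a.e.; applying the same trace-norm inequality pointwise at the a.e.\ set where both $S'$ and $(\tr S)'$ exist yields $\|S'(\varphi)\| \le (\tr S)'(\varphi) = 0$, so $S' = 0$ a.e., and $S$ is singular continuous by definition.

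The main---and really the only nontrivial---obstacle is transferring scalar regularity of the trace back to regularity of the matrix-valued integrator. The work is done uniformly by the elementary bound $\|X\|\le \tr X$ for $X \ge 0$ applied to the positive semidefinite increments of an increasing mvf; once this observation is in hand, the rest is a clean application of Corollary~\ref{cor:thetacont} followed by a routine verification that the resulting integrator matches the definition of outer, respectively sc-inner.
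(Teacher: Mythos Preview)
Your proposal is correct and follows essentially the same route as the paper: the forward direction via the determinant formula, the converse via Corollary~\ref{cor:thetacont}, and the regularity transfer from $\tr(E\circ\psi)=\psi$ back to $E\circ\psi$ using $\|X\|\le\tr X$ for $X\ge 0$. The paper packages the last step slightly differently---it observes once that $E$ is $1$-Lipschitz (from $\|E(t)-E(s)\|\le\tr(E(t)-E(s))=t-s$) and then concludes that $E\circ\psi$ inherits absolute continuity or singular continuity directly from $\psi$---but this is the same inequality doing the same work.
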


\begin{proof}
If $A$ is outer or sc-inner, respectively, then $\det A$ is by the determinant formula also outer or sc-inner, respectively.

If on the other hand $\det A$ is outer or sc-inner, respectively, then we find

$$\det A(z)=c\cdot\exp\left(\int_0^{2\pi} h_z(\varphi)\,d\psi(\varphi)\right)$$

with $|c|=1$ and $\psi$ being an absolutely continuous or singular continuous increasing function, respectively. Therefore we can conclude by Corollary \ref{cor:thetacont} that
$$A(z)=U\exp\left(\int_0^{2\pi} h_z(\varphi)\,dE(\psi(\varphi))\right)$$
where $U$ is a unitary constant and $E$ an increasing mvf satisfying $\tr E(t)~=~t$. Note that $E$ is Lipschitz continuous, because for $t\ge s$ we have 
$$\|E(t)-E(s)\|\le \tr (E(t)-E(s))=t-s$$
It follows that if $\psi$ is absolutely continuous, then also $E\circ\psi$ is absolutely continuous. On the other hand, if $\psi$ is singular continuous, then $E\circ\psi$ is also singular continuous. That is, $A$ is outer or sc-inner, respectively.
\end{proof}

Now we can proceed to proving Theorem \ref{thm:main}. In the first step we show how to detach the pp-inner factor.

\begin{lemma}
\label{lemma:ppinnerfact}
Let $A\in\Sch$ and assume that $A$ has no zeros. Then there exists a pp-inner function $S_{pp}$ and a continuous increasing mvf $\Sigma$ such that
\begin{align}
A(z)=S_{pp}(z)\cdot\mint_0^{2\pi} \exp(h_z(\varphi)d\Sigma(\varphi))
\end{align}
for all $z\in\D$.
\end{lemma}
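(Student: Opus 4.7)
My plan is to begin with Potapov's multiplicative representation. Since $A$ has no zeros, Theorem \ref{thm:potapov} yields
$$A(z) = U_0 \mint_0^L \exp(h_z(\theta(t))\,dE(t))$$
for some unitary $U_0$, a right-continuous increasing $\theta : [0,L] \to [0,2\pi]$, and an increasing mvf $E$ with $\tr E(t) = t$ (hence Lipschitz in the trace seminorm). Morally, the pp-inner behaviour of $A$ lives on the maximal closed intervals where $\theta$ is constant, while the complementary pieces account for the continuous integrator we seek.

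Let $\{I_k = [\alpha_k,\beta_k]\}$ be the at most countable family of maximal intervals on which $\theta$ is constant, with value $\theta_k$ on $I_k$. Applying Proposition \ref{prop:mintsep} at all endpoints $\alpha_k, \beta_k$ gives an ordered factorization
$$A = U_0 \cdot P_0 \cdot B_1 \cdot P_1 \cdot B_2 \cdot P_2 \cdots,$$
where each
$$B_k(z) = \mint_{\alpha_k}^{\beta_k} \exp(h_z(\theta_k)\,dE(t))$$
is, after translating and rescaling $E$ to an increasing mvf $E_k$ on $[0,l_k]$ with $\tr E_k(t) = t$, a single pp-inner factor in the form \eqref{eqn:introppinner}; each $P_j$ is a multiplicative integral over a complementary subinterval on which $\theta$ has no interval of constancy (though it may have jumps).

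Next, I apply Lemma \ref{lemma:ginzburg} iteratively to commute every $B_k$ leftwards past the intervening $P_j$'s. Each swap $P_j \cdot B_k \leadsto \tilde B_k \cdot \tilde P_j$ preserves the respective determinants. In the infinite case, I would truncate after the first $N$ constancy intervals, apply finitely many swaps, and take a limit using Montel's Theorem \ref{thm:montel} to extract a uniform-on-compacta limit, arriving at
$$A = S_{pp} \cdot F$$
with $\det S_{pp} = \prod_k \det B_k$ (a scalar pp-inner function) and $\det F = \prod_j \det P_j$ (a scalar inner function whose associated measure on $[0,2\pi]$ is continuous, since every atom of $d\theta^\dagger$ has been absorbed into $S_{pp}$). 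To confirm that $S_{pp}$ is truly pp-inner and not merely has pp-inner determinant, I invoke Potapov's theorem and the uniqueness of the angle function in Lemma \ref{lemma:theta}: the angle function in a multiplicative representation of $S_{pp}$ is forced to be a step function by its scalar pp-inner determinant, so splitting $S_{pp}$ at the jumps of this step function via Proposition \ref{prop:mintsep} writes it as a product of pp-inner factors. For $F$, Corollary \ref{cor:thetacont} (applicable because $\det F$ corresponds to a continuous measure) yields $F = U_2 \mint_0^{2\pi} \exp(h_z(\varphi)\,d\Sigma(\varphi))$ with $\Sigma$ an increasing mvf whose trace is continuous, and the trace-dominated estimate $\|\Sigma(\varphi)-\Sigma(\varphi')\| \le \tr(\Sigma(\varphi)-\Sigma(\varphi'))$ then forces $\Sigma$ itself to be continuous. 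Finally, $U_2$ is absorbed into $S_{pp}$ using Proposition \ref{prop:mintunitaryconst} (conjugating integrators by $U_2$ preserves pp-inner form since traces are conjugation-invariant).

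The hard part will be the iterated Ginzburg commutation when there are infinitely many constancy intervals: one must arrange the swaps in an order compatible with Montel-type convergence, then verify that the limiting left factor is not only inner with pp-inner determinant but structurally pp-inner, which is exactly what the Potapov/Lemma \ref{lemma:theta} detour accomplishes.
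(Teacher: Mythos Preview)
Your plan follows the same strategy as the paper: Potapov's representation, identification of the constancy intervals of $\theta$, iterated Ginzburg commutation (Lemma \ref{lemma:ginzburg}) paired with Lemma \ref{lemma:theta} to peel off pp-inner factors, and Corollary \ref{cor:thetacont} for the remainder. The one organizational difference is the limiting argument in the infinite case. The paper builds $S_j=\prod_{i=1}^j \mint_0^{\ell_i}\exp(h_z(\theta_i)\,dE_i)$ as a genuine partial product (each $E_i$ is fixed once extracted at step $i$) and proves convergence of the full sequence directly via Theorem \ref{thm:prodconv}, using only $\sum_i \ell_i\le L<\infty$; this makes $S_\infty$ pp-inner by construction, with no separate structural verification needed. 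Your Montel route also works for existence, but the post-hoc step ``split $S_{pp}$ at the jumps of its step-function $\theta$ via Proposition \ref{prop:mintsep}'' is not automatic when there are infinitely many jumps: Proposition \ref{prop:mintsep} handles one point at a time, and exhibiting the result as a \emph{convergent} infinite product of pp-inner factors still requires exactly the estimate behind Theorem \ref{thm:prodconv}. So the Montel detour does not avoid that convergence argument---it only postpones it.
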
 

\begin{proof}
By Potapov's Theorem \ref{thm:potapov}
\begin{align}
A(z)=U\mint^L_0 \exp\left(h_z(\theta(t))\,dE(t)\right)
\end{align}
where $U$ is a unitary constant, and $E,L,\theta$ are as stated in that theorem. Let $\{(a_k,b_k)\,:\,k=1,\dots,m\}$ with $m\in\N\cup\{\infty\}$ be a complete enumeration of all the intervals on which $\theta$ is constant and let $\theta_k$ be the value of $\theta$ on the interval $(a_k,b_k)$. The length of the $k$th interval will be denoted by $\ell_k=b_k-a_k$. For any increasing function $\psi$, we denote the total length of intervals on which $\psi$ is constant by $\ell(\psi)$. That is, $\ell(\theta)=\sum_{i=1}^m \ell_i$. Note that $\ell(\theta)\le L<\infty$.

We inductively construct sequences $(S_j)_j, (A_j)_j$ of contractive mvfs such that $S_0=I$, $A_0=A$, $S_j\cdot A_j=A$ and
\begin{align}
\label{eqn:ppinnerfact1}
S_j(z)=\prod_{i=1}^j\mint_0^{\ell_i} \exp(h_z(\theta_i)dE_i(t)),\,\,A_j(z)=U_j\mint_0^{L_j}\exp(h_z(\theta^{(j)}(t))dF_j(t))
\end{align}
for $j\ge 1$ with $U_j$ unitary constants, $L_j=L-\sum_{i=1}^{j-1} \ell_i$, $\theta^{(j)}$ increasing functions and $F_j, E_j$ increasing mvfs satisfying $\tr E_j(t)=\tr F_j(t)=t$. We also want the intervals of constancy of $\theta^{(j)}$ to have exactly the lengths $\ell_{j+1},\ell_{j+2},\dots,\ell_m$ and that $\theta^{(j)}$ assumes the values $\theta_{j+1},\theta_{j+2},\dots,\theta_m$ on them, respectively.

Let $0\le k< m$ and assume we have constructed $A_j,S_j$ already for all $0\le j\le k$. Then $A_{k}$ has an interval of constancy of length $\ell_{k+1}$, say $(a,b)$. We rewrite $A_k$ as
$$A_k(z)=\mint_0^a e^{h_z(\theta^{(k)}(t))dF_{k}(t)} \cdot \mint_a^b e^{h_z(\theta_{k+1})dF_{k}(t)} \cdot \mint_b^{L_{k}}e^{h_z(\theta^{(k)}(t))dF_{k}(t)}$$

By Lemma \ref{lemma:ginzburg}, we can interchange the first two factors while preserving their determinants. Using Lemma \ref{lemma:theta} and Proposition \ref{prop:mintunitaryconst} on the modified factors yields
\begin{align}
\label{eqn:ppinnerfact3}
A_k(z)=\mint_0^{\ell_{k+1}} e^{h_z(\theta_{k+1})dG(t)}\cdot \tilde{U}\cdot\mint_0^a e^{h_z(\theta^{(k)}(t))dH(t)} \cdot \mint_b^{L_{k}}e^{h_z(\theta^{(k)}(t))dF_{k}(t)}
\end{align}
where $\tilde{U}$ is a unitary constant with $\det\tilde{U}=1$ and $G,H$ are increasing mvfs with $\tr G(t)=\tr H(t)=t$. Now we define $E_{k+1}=G$ and define $S_{k+1}$ as prescribed in \eqref{eqn:ppinnerfact1}. Also set
\begin{align}
\label{eqn:ppinnerfact4}
A_{k+1}(z)=\tilde{U}\cdot\mint_0^a e^{h_z(\theta^{(k)}(t))dH(t)} \cdot \mint_b^{L_{k}}e^{h_z(\theta^{(k)}(t))dF_{k}(t)}
\end{align}
Now it remains to check that $A_{k+1}$ has the form prescribed in \eqref{eqn:ppinnerfact1}. Computing the determinant gives
\begin{align*}
\det A_{k+1}(z) &=\exp\left(\int_0^a h_z(\theta^{(k)}(t))dt + \int_b^{L_{k}}h_z(\theta^{(k)}(t))dt\right)\\
&=\exp\left(\int_0^{L_{k+1}} h_z(\theta^{(k+1)}(t)) dt\right)
\end{align*}
where we have set $L_{k+1}=L_k-b+a$ and
$$\theta^{(k+1)}(t)=\left\{\begin{array}{ll}
\theta^{(k)}(t),\, & \text{for}\;t\in[0,a)\\
\theta^{(k)}(t+b-a),\, & \text{for}\;t\in[a,L_{k+1}]
\end{array}\right.
$$
By Lemma \ref{lemma:theta}, $A_{k+1}$ can be written as
$$A_{k+1}(z)=U_{k+1}\mint_0^{L_{k+1}}\exp(h_z(\theta^{(k+1)}(t))dF_{k+1}(t))$$
where $U_{k+1}$ is a unitary constant and $F_{k+1}$ an increasing mvf with $\tr F_{k+1}(t)~=~t$. 

If $m<\infty$, then this process terminates after $m$ steps. Then $A=S_mA_m$ is the desired factorization up to the unitary constants $U_k$ which we can pull up to the front using Proposition \ref{prop:mintunitaryconst}. So from now on we assume $m=\infty$. We claim that the infinite product
$$S_\infty(z)=\prod_{i=1}^\infty \mint_0^{\ell_i} \exp(h_z(\theta_i)dE_i(t))$$
converges. 
To verify that, we invoke Theorem \ref{thm:prodconv}. Condition (b) is trivially satisfied because all the factors are contractive mvfs. Denote the factors by $B_i(z)=\mint_0^{\ell_i} \exp(h_z(\theta_i)dE_i(t))$.  To check condition (a), we write
$$B_i(z)=I+\int_0^{\ell_i} h_z(\theta_i) dE_i(t)+R_i$$
where the remainder term $R_i$ satisfies \eqref{eqn:minttayloresterr}. Suppose that $K\subset\D$ is compact. Then there is $C>0$ such that $\frac{1+r}{1-r}\le C$ for all $z=re^{i\varphi}\in K$. Now estimate 
$$\left\|\int_0^{\ell_i} h_z(\theta_i) dE_i(t)\right\|\le \int_0^{\ell_i} |h_z(\theta_i)| dt \le C \ell_i$$
Let $N$ be large enough such that $C\ell_i\le 1$ for all $i\ge N$.  Then $R_i\le C^2 \ell_i^2$ for $i\ge N$ and
$$\sum_{i=N}^\infty \|I-B_i(z)\|\le \sum_{i=N}^\infty \int_0^{\ell_i}|h_z(\theta_i)| dt + R_i \le \sum_{i=N}^\infty C\ell_i +C^2\ell_i^2<\infty$$
because $\sum_{i=1}^\infty \ell_i\le L<\infty$ converges. This proves the prerequisites of Theorem \ref{thm:prodconv}. Therefore $(S_k)_k$ converges uniformly on compact sets to the pp-inner function $S_\infty$. 

Therefore also $A_k=S_k^{-1}A$ converges to a contractive mvf $A_\infty=S_\infty^{-1}A$. Applying the determinant formula gives
\begin{align}
\det A_\infty(z)&=(\det S_\infty(z))^{-1} \det A(z)\\
&=c\cdot \exp\left(-\sum_{i=1}^\infty \ell_i h_z(\theta_i)+\int_0^L h_z(\theta(t))dt\right)\label{eqn:ppinnerfact5}
\end{align}
where $|c|=1$.

As in the proof of Lemma \ref{lemma:theta} we write
\begin{align}
\label{eqn:ppinnerfact6}
\int_0^L h_z(\theta(t)) dt=\int_0^{2\pi} h_z(\varphi)d\mu(\varphi)
\end{align}
where $\mu$ is the positive Borel measure corresponding to the increasing function $\theta^\dagger$. Decompose $\mu=\mu_p+\mu_c$ where $\mu_p$ is a pure point measure and $\mu_c$ an atomless measure. The pure point component $\mu_p$ is given by the jump discontinuities of $\theta^\dagger$ which in turn correspond to the intervals of constancy of $\theta$ the size of the jump being the length of the interval. That is,
\begin{align}
\label{eqn:ppinnerfact7}
\mu_p=\sum_{i=1}^\infty \ell_i \delta_{\theta_i}
\end{align}
where $\delta_\varphi$ denotes the Dirac measure at $\varphi$. Combining \eqref{eqn:ppinnerfact5}, \eqref{eqn:ppinnerfact6}, \eqref{eqn:ppinnerfact7} we get
\begin{align}
\det A_\infty(z)=c\cdot \exp\left(\int_0^{2\pi} h_z(\varphi)d\mu_c(\varphi)\right)
\end{align}
By Corollary \ref{cor:thetacont} we get
\begin{align}
A_\infty(z)=U_\infty\mint_0^{2\pi} \exp(h_z(\varphi)\,d\Sigma(\varphi))
\end{align}
for a unitary constant $U_\infty$ and a continuous increasing mvf $\Sigma$. After moving the unitary constant $U_\infty$ to the front, $A=S_\infty A_\infty$ is the desired factorization.
\end{proof}

\begin{thm}
\label{thm:ginzburg-fact}
Let $A\in\H^\infty$. Then there exist a B.P. product $B$, a pp-inner function $S_{pp}$, an sc-inner function $S_{sc}$ and an outer function $E$ such that 
\begin{align}
\label{eqn:ginzburg-fact}
A(z)=B(z)S_{pp}(z)S_{sc}(z)E(z)\,\,\,\,\text{for}\;z\in\D
\end{align}
\end{thm}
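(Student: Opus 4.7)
My plan is to extract the four factors in the order $B$, $S_{pp}$, $S_{sc}$, $E$, reducing at each stage to a simpler residual mvf. After dividing $A$ by $\|A\|_\infty$ we may assume $A\in\Sch$. Theorem \ref{thm:bpfactor} then produces a factorization $A=B\cdot\tilde A$ where $B$ is a Blaschke-Potapov product corresponding to the zeros of $\det A$ and $\tilde A\in\Sch$ has no zeros in $\D$. Applying Lemma \ref{lemma:ppinnerfact} to $\tilde A$ yields $\tilde A=S_{pp}\cdot A_\infty$ where $S_{pp}$ is pp-inner and
\[ A_\infty(z)=U\mint_0^{2\pi}\exp(h_z(\varphi)\,d\Sigma(\varphi)) \]
with $U$ a unitary constant and $\Sigma$ a continuous increasing mvf. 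So the problem reduces to showing that $A_\infty=S_{sc}\cdot E$ for some sc-inner $S_{sc}$ and outer $E$.

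This last step is the main obstacle. The plan is to mimic the proof of Lemma \ref{lemma:ppinnerfact}, but now the pieces I peel off are not the intervals of constancy of an angular function (which came from pure point atoms), but rather intervals supporting the singular continuous part of the scalar measure $d\tr\Sigma$. Decomposing $\tr\Sigma=\mu_{ac}+\mu_{sc}$, the singular continuous part $\mu_{sc}$ is carried by a Lebesgue-null Borel set, which I would approximate by a sequence of finite unions of intervals $F_k\subset[0,2\pi]$ with $|F_k|\to 0$ and $\mu_{sc}([0,2\pi]\setminus F_k)\to 0$. At each level $k$, the representation of $A_\infty$ splits as a product of multiplicative integrals over the components of $F_k$ and its complement; using Lemma \ref{lemma:ginzburg} to commute factors past one another and Corollary \ref{cor:thetacont} to rewrite each piece in canonical form, one obtains an approximate factorization $A_\infty=S_{sc}^{(k)}\cdot E^{(k)}$ whose scalar determinants $\det S_{sc}^{(k)}$ and $\det E^{(k)}$ converge to the scalar sc-singular inner and outer factors of $\det A_\infty$ respectively.

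To close the argument, I would extract convergent subsequences of $S_{sc}^{(k)}$ and $E^{(k)}$ by combining Montel's theorem with Helly's selection theorem (Theorem \ref{thm:hellyselmvf}) applied to their integrator data, and then invoke Helly's convergence theorem for multiplicative integrals (Theorem \ref{thm:hellyconvmvf}) to identify the limits $S_{sc}$ and $E$ as multiplicative integrals with the correct integrators. Lemma \ref{lemma:detinnerouter} then guarantees that these limits are sc-inner and outer respectively, since their determinants are precisely the sc-singular inner and outer factors of $\det A_\infty$ produced by the scalar inner-outer theory. Concatenating with the factors extracted in the first two stages yields the desired decomposition $A=B\,S_{pp}\,S_{sc}\,E$. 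The delicate point throughout is that multiplicative integrals do not behave additively under decomposition of the integrator, so the approximation-and-commutation scheme replaces the naive splitting $d\Sigma=d\Sigma_{ac}+d\Sigma_{sc}$; controlling the errors in \eqref{eqn:minttayloresterr} uniformly along the refinement is what makes the Helly-type passage to the limit work.
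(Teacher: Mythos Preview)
Your overall architecture is exactly the paper's: normalize into $\Sch$, detach the B.P. product via Theorem \ref{thm:bpfactor}, peel off the pp-inner part via Lemma \ref{lemma:ppinnerfact}, and then separate the remaining continuous integrator into singular-continuous and absolutely-continuous pieces by an approximation-and-commutation argument using Lemma \ref{lemma:ginzburg}, Montel, and Lemma \ref{lemma:detinnerouter}. The one substantive difference is in the approximation scheme for that last step. The paper decomposes the matrix integrator itself as $\Sigma=\Sigma_s+\Sigma_a$, approximates $\Sigma_s$ uniformly by increasing step functions $T_k$, and observes that the multiplicative integral with respect to $T_k+\Sigma_a$ factors \emph{exactly} as an alternating product of exponential jump factors and integrals against $d\Sigma_a$; the jump factors are then commuted to the front. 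Your variant instead approximates the \emph{carrier set} of $d\tr\Sigma_s$ by finite unions of intervals $F_k$ and splits the integral over $F_k$ versus its complement. Both schemes lead to a sequence of exact factorizations $A_\infty=S_{sc}^{(k)}E^{(k)}$ whose determinants converge to the scalar sc-inner and outer factors of $\det A_\infty$, and both finish identically via Montel and Lemma \ref{lemma:detinnerouter}.

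One small point: your invocation of Helly's selection and convergence theorems ``applied to their integrator data'' in the last paragraph is unnecessary and somewhat misplaced. After Lemma \ref{lemma:ginzburg} the commuted factors $S_{sc}^{(k)}$, $E^{(k)}$ are only known as elements of $\Sch$ with prescribed determinants; you do not have explicit integrators for them to which Helly could be applied. Fortunately you do not need them: Montel alone gives subsequential limits $S_{sc},E\in\Sch$, continuity of the determinant gives $\det S_{sc}$ and $\det E$ as the scalar sc-inner and outer factors, and Lemma \ref{lemma:detinnerouter} (which internally invokes Corollary \ref{cor:thetacont}) then produces the multiplicative integral representations. This is precisely how the paper closes the argument.
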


\begin{proof}
Assume without loss of generality that $A\in\Sch$ (multiply with a positive multiple of the identity matrix).
The B.P. product can be detached using Theorem \ref{thm:bpfactor}. Thus we can assume that $A$ has no zeros. Lemma \ref{lemma:ppinnerfact} reduces the claim to showing that a contractive mvf $A$ of the form
\begin{align}
\label{eqn:ginzburgpf1}
A(z)=\mint^{2\pi}_0 \exp\left(h_z(\varphi)\,d\Sigma(\varphi)\right)
\end{align}
where $\Sigma$ is a continuous increasing mvf, can be factored into an sc-inner and an outer function.

To achieve this let us decompose $\Sigma=\Sigma_s+\Sigma_a$ where $\Sigma_s$ is a singular-continuous and $\Sigma_a$ an absolutely continuous function (see e.g. \cite{SteinShakarchi3}). By continuity, we can approximate $\Sigma_s$ uniformly by a sequence of step functions $(T_k)_k$ which we may assume to be of the form
$$T_k=\sum_{i=1}^{m_k}T^{(i)}_k\ch_{(t_k^{(i-1)},t_k^{(i)}]}$$
for $m_k\in\N, 0=t_k^{(0)}<t_k^{(1)}<\cdots<t_k^{(m_k)}=2\pi$ and $T_k^{(i)}$ Hermitian matrices such that $T_k^{(1)}=0$ and $\Delta_k^{(i)}=T_k^{(i)}-T_k^{(i-1)}\ge 0$ for $m_k\ge i> 1$. Set $\Delta_k^{(1)}=0$ for convenience. Let $\Sigma^{(k)}=T_k+\Sigma_a$.

Then
$$\mint^{t_k^{(i)}}_{t_k^{(i-1)}} e^{h_z(\varphi)\,d\Sigma^{(k)}(\varphi)}
=e^{h_z(t_k^{(i-1)})\Delta_k^{(i)}}\mint^{t_k^{(i)}}_{t_k^{(i-1)}} e^{h_z(\varphi)\,d\Sigma_a(\varphi)}$$
for $i=1,\dots,m_k$. This implies
\begin{align}
\label{eqn:factpf2}
\mint_{0}^{2\pi} e^{h_z(\varphi)\,d\Sigma^{(k)}(\varphi)}
=\prod_{i=1}^{m_k}e^{h_z(t_k^{(i-1)})\Delta_k^{(i)}}\mint^{t_k^{(i)}}_{t_k^{(i-1)}} e^{h_z(\varphi)\,d\Sigma_a(\varphi)}\end{align}
We use Lemma \ref{lemma:ginzburg} to move all the factors $e^{h_z(t_k^{(i-1)})\Delta_k^{(i)}}$ up to the front. Thereby, we obtain
\begin{align}
\label{eqn:factpf3}
\mint^{2\pi}_{0} e^{h_z(\varphi)\,d\Sigma^{(k)}(\varphi)}=S_k(z)E_k(z)
\end{align}
where $S_k$, $E_k$ are contractive mvfs satisfying
\begin{align}
\label{eqn:factpf5}
\det E_k(z)=\exp\left(\int_0^{2\pi} h_z(\varphi) d\tr\Sigma_a(\varphi)\right)\;\text{and}
\end{align}
\begin{align}
\label{eqn:factpf6}
\det S_k(z)=\exp\left(\sum_{i=1}^{m_k} h_z(t_k^{(i)})\tr\Delta_k^{(i)}\right)=\exp\left(\int_0^{2\pi} h_z(\varphi) d\tr T_k(\varphi)\right)
\end{align}

By Montel's theorem, there exists a subsequence $(k_j)_j$ such that $(S_{k_j})_j$ and $(E_{k_j})_j$ both converge uniformly on compact sets to contractive mvfs $S$ and $E$, respectively. Equation \eqref{eqn:factpf5} implies
\begin{align}
\label{eqn:factpf7}
\det E(z)=\lim_{j\rightarrow\infty}\det E_{k_j}(z)=\exp\left(\int_0^{2\pi} h_z(\varphi) d\tr\Sigma_a(\varphi)\right)
\end{align}
Because the trace of an absolutely continuous function is absolutely continuous, $\det E$ is an outer function. By Lemma \ref{lemma:detinnerouter}, also $E$ must be an outer function.

We also know that $\tr T_k$ converges uniformly to $\tr\Sigma_s$ as $k\rightarrow\infty$. Therefore \eqref{eqn:factpf6} gives
\begin{align}
\label{eqn:factpf8}
\det S(z)=\lim_{j\rightarrow\infty}\det S_{k_j}(z)=\exp\left(\int_0^{2\pi} h_z(\varphi) d\tr \Sigma_s(\varphi)\right)
\end{align}
The trace of a singular continuous mvf is singular continuous. Hence Lemma \ref{lemma:detinnerouter} shows that $S$ is an sc-inner mvf.

By Helly's convergence Theorem \ref{thm:hellyconvmvf}, the left hand side of \eqref{eqn:factpf3} converges to $A(z)$ for all $z\in\D$. Therefore we obtain the desired factorization:
\begin{align}
A(z)=S(z)E(z)
\end{align}
\end{proof}

We proceed proving some additional properties of inner and outer functions.

\begin{lemma}
\label{lemma:innerouterconst}
A function $A\in\H^\infty$ that is both inner and outer must be a unitary constant.
\end{lemma}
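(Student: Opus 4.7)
The plan is to show that $A(z)$ is unitary for every $z\in\D$, whereupon Corollary \ref{cor:unitaryconst} will immediately yield that $A$ is a constant unitary matrix. I would achieve this by establishing that both $A$ and $A^{-1}$ are contractive throughout $\D$, so that each $A(z)$ is an invertible contraction with contractive inverse, hence unitary.

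First, I would reduce the problem to a scalar statement by passing to determinants. Since $A$ is inner, $A(z)$ is unitary for a.e. $z\in\T$, whence $|\det A|=1$ a.e. on $\T$. Since $A$ is outer, writing $A(z)=U\mint_0^{2\pi}\exp(h_z(\varphi)M(\varphi)\,d\varphi)$, the determinant formula (Proposition \ref{prop:mintdet}) gives
\[
\det A(z)=\det U\cdot \exp\left(\int_0^{2\pi} h_z(\varphi)\tr M(\varphi)\,d\varphi\right).
\]
Using $\Re h_z(\varphi)=(|z|^2-1)/|z-e^{i\varphi}|^2$ (the negative of the Poisson kernel) and the fact that $\tr M$ is a real $L^1$ function, the function $-\log|\det A(z)|$ equals the Poisson integral of $\tr M$. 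Its nontangential boundary limits are $\tr M(\varphi_0)$ a.e.\ by Fatou's theorem; these must vanish because $|\det A|=1$ on $\T$, so $\tr M=0$ a.e.\ Consequently $|\det A(z)|\equiv 1$ in $\D$, and the maximum modulus principle forces $\det A$ to be a unimodular constant.

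Next, because $\det A$ is a nonzero constant, Cramer's rule expresses $A^{-1}$ as $\frac{1}{\det A}\mathrm{adj}(A)$, whose entries are polynomials in the bounded analytic entries of $A$; in particular $A^{-1}\in\H^\infty$. On $\T$ one has $A^{-1}(z)=A^*(z)$ a.e., so $\|A^{-1}(z)\|=1$ a.e.\ on $\T$. Then, using subharmonicity of $\|A^{-1}\|$ as the norm of an analytic mvf (discussed in the appendix) together with the maximum principle, one obtains $\|A^{-1}(z)\|\le 1$ for every $z\in\D$. Combined with contractivity of $A$ (which holds since $A$ is inner), the chain $\|v\|=\|A^{-1}(z)A(z)v\|\le\|A(z)v\|\le\|v\|$ makes each $A(z)$ an isometry and hence unitary. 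A final application of Corollary \ref{cor:unitaryconst} will complete the argument.

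The main subtlety is the gap between $\tr M=0$ and $M=0$: the lower bound on the least eigenvalue of $M$ only forces $\|M\|\le(n-1)C$ once $\tr M=0$, so $M$ itself need not vanish. The trick is to avoid chasing $M=0$ directly, and instead extract from $\det A=\mathrm{const}$ the bounded analyticity of $A^{-1}$, using subharmonicity of $\|A^{-1}\|$ to transport the boundary unitarity into the interior of $\D$.
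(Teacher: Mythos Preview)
Your argument is correct and the reasoning is sound; the passage from $\|A^{-1}\|=1$ a.e.\ on $\T$ to $\|A^{-1}\|\le 1$ on $\D$ is exactly the kind of subharmonicity/maximum-principle step the paper itself uses after Definition~\ref{def:inner} and in Lemma~\ref{lemma:finitebp}, so it is acceptable in this context (and can be made airtight via the Poisson representation of the $H^\infty$ entries of $A^{-1}$).

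However, your route differs from the paper's. Both proofs first reduce to the scalar fact that $\det A$ is inner and outer, hence a unimodular constant. From there the paper simply feeds $A$ into Potapov's representation~\eqref{eqn:potapovrepr}: since $A$ is outer it has no zeros, so $B$ is a unitary constant, and evaluating $|\det A(0)|=e^{-L}=1$ forces $L=0$, i.e.\ the multiplicative integral collapses and $A$ is a unitary constant. Your approach instead exploits $|\det A|\equiv 1$ through Cramer's rule to get $A^{-1}\in\H^\infty$, then uses boundary unitarity plus subharmonicity to make $A^{-1}$ contractive, and finishes with Corollary~\ref{cor:unitaryconst}. The paper's proof is shorter because Potapov's theorem is already in hand; yours is more self-contained, avoiding the heavy machinery of Theorem~\ref{thm:potapov} and instead recycling the elementary ``$A$ and $A^{-1}$ both contractive $\Rightarrow$ $A$ unitary'' trick already used in the uniqueness part of Theorem~\ref{thm:bpfactor}. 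Your closing remark about why one should not try to deduce $M=0$ directly from $\tr M=0$ is a good observation.
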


\begin{proof}
If $A$ is inner and outer, then also $\det A$ is a scalar function which is inner and outer. Hence $\det A(z)$ is a unimodular constant. By Potapov's Theorem \ref{thm:potapov}, $A$ is given by \eqref{eqn:potapovrepr}. Plugging in the value $z=0$ gives
$$1=|\det A(0)|=\exp(-L)$$ 
and therefore $L=0$.
\end{proof}

\begin{thm}
\label{thm:innermintrep}
Let $A\in\H^\infty$. Then $A$ is an inner function if and only if there exist a B.P. product $B$, a pp-inner function $S_{pp}$ and an sc-inner function $S_{sc}$ such that
\begin{align}
\label{eqn:innerfct1}
A(z)=B(z)S_{pp}(z)S_{sc}(z)
\end{align}
for all $z\in\D$.
\end{thm}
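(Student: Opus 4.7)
The plan is to handle the two implications separately, with the forward direction being essentially immediate and the backward direction reducing to Theorem \ref{thm:ginzburg-fact} together with Lemma \ref{lemma:innerouterconst}.

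For the easy direction, I would observe that any B.P.\ product $B$ is inner by Theorem \ref{bpconv} together with the fact that each B.P.\ factor is unitary on $\T$ (immediate from \eqref{BPfact1}, since $|\beta_{z_0}(e^{i\theta})|=1$). Both pp-inner and sc-inner functions were shown to be inner in the discussion following Definitions \ref{def:inner} and the subsequent pp/sc definitions. Since the product of mvfs that are unitary almost everywhere on $\T$ is again unitary almost everywhere on $\T$, the representation \eqref{eqn:innerfct1} automatically yields an inner function. This disposes of one implication.

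For the converse, assume $A\in\H^\infty$ is inner. Applying Theorem \ref{thm:ginzburg-fact} we obtain
\begin{align*}
A(z)=B(z)S_{pp}(z)S_{sc}(z)E(z)
\end{align*}
with $E$ outer. The key step is to show that $E$ is forced to reduce to a unitary constant. By the previous paragraph, $B S_{pp} S_{sc}$ is inner, so $(BS_{pp}S_{sc})(e^{i\theta})$ is unitary for almost every $\theta\in[0,2\pi)$. Combined with the assumption that $A(e^{i\theta})$ is unitary almost everywhere, and with the existence of radial boundary values for bounded analytic mvfs, this forces $E(e^{i\theta})=(BS_{pp}S_{sc})^{-1}(e^{i\theta})A(e^{i\theta})$ to be unitary almost everywhere on $\T$ as well. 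Hence $E$ is both inner and outer. By Lemma \ref{lemma:innerouterconst}, $E$ must be a unitary constant $U$.

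Finally, I would absorb $U$ into the sc-inner factor: if $S_{sc}(z)=V\mint_0^{2\pi}\exp(h_z(\varphi)\,dS(\varphi))$ for some unitary constant $V$, then $S_{sc}(z)U$ may be written as $V'\mint_0^{2\pi}\exp(h_z(\varphi)\,dS'(\varphi))$ with $V'=VU$ still unitary and $S'=U^{-1}SU$ still a singular continuous increasing mvf (conjugation by a unitary preserves all relevant properties), using Proposition \ref{prop:mintunitaryconst}. Thus $S_{sc}E=S_{sc}U$ is again sc-inner and the factorization \eqref{eqn:innerfct1} holds. The main obstacle, such as it is, is the careful justification that $E$ is inner from the almost-everywhere unitarity of the remaining factors on $\T$; this uses nothing beyond the existence of radial limits for bounded analytic mvfs and the fact that products of unitaries are unitary, but it should be stated explicitly so that Lemma \ref{lemma:innerouterconst} can be invoked.
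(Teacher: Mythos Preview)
Your proposal is correct and follows the same approach as the paper's own proof, which simply cites the earlier verification that each factor in \eqref{eqn:innerfct1} is inner for one direction and invokes Theorem \ref{thm:ginzburg-fact} together with Lemma \ref{lemma:innerouterconst} for the other. Your write-up merely spells out the intermediate step---that $E$ inherits unitary radial boundary values from $A$ and $BS_{pp}S_{sc}$, hence is inner---and the absorption of the resulting unitary constant into $S_{sc}$ via Proposition \ref{prop:mintunitaryconst}, both of which the paper leaves implicit.
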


\begin{proof}We already saw that functions of the form \eqref{eqn:innerfct1} are inner. The inner-outer factorization \eqref{eqn:ginzburg-fact} and Lemma \ref{lemma:innerouterconst} imply the other direction.
\end{proof}

\begin{thm}
\label{thm:outermintrep}
A function $A\in\H^\infty$ is outer if and only if 
for every $B\in\H^\infty$ with the property $B^*(e^{i\theta})B(e^{i\theta})=A^*(e^{i\theta})A(e^{i\theta})$ for almost every $\theta\in[0,2\pi]$, we have that
\begin{align}
\label{eqn:outerdef2}
B^*(z)B(z)\le A^*(z)A(z)
\end{align}
for all $z\in\D$.
\end{thm}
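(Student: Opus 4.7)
I will establish both directions using the inner--outer factorization (Theorem \ref{thm:ginzburg-fact}). For the easier direction ($\Leftarrow$), I assume the maximality hypothesis and factor $A = I_A E_A$ with $I_A$ inner (the product of the B.P., pp-inner, and sc-inner factors) and $E_A$ outer. Since $I_A$ is unitary almost everywhere on $\T$, we have $E_A^*E_A = A^*A$ a.e.\ on $\T$, so the hypothesis applied to $E_A$ gives $E_A^*E_A \le A^*A$ on $\D$; conversely, contractivity of $I_A$ yields $A^*A = E_A^*I_A^*I_AE_A \le E_A^*E_A$ on $\D$. Combining forces $E_A^*(I - I_A^*I_A)E_A = 0$ on $\D$, and because $E_A$ is invertible on $\D$ (outer functions are invertible everywhere by Proposition \ref{prop:mintdet}), we conclude $I_A^*I_A = I$ on $\D$. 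As $I_A$ is square, it is therefore unitary everywhere on $\D$, hence a constant unitary matrix by Corollary \ref{cor:unitaryconst}, and $A$ is outer.

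For the forward direction ($\Rightarrow$), given $B \in \H^\infty$ with $B^*B = A^*A$ on $\T$, I factor $B = I_B E_B$ via Theorem \ref{thm:ginzburg-fact} and obtain $E_B^*E_B = A^*A$ a.e.\ on $\T$. The argument then reduces to the following uniqueness lemma: \emph{if $A_1, A_2$ are outer with $A_1^*A_1 = A_2^*A_2$ a.e.\ on $\T$, then $A_1 = UA_2$ for some constant unitary $U$.} Granted this, $E_B^*E_B = A^*A$ holds on all of $\D$, and then $B^*B = E_B^*I_B^*I_BE_B \le E_B^*E_B = A^*A$, completing the proof.

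To establish the uniqueness lemma I consider $C = A_1A_2^{-1}$, which is analytic on $\D$ since $A_2$ is invertible there. A direct computation on $\T$ gives $C^*C = A_2^{-*}A_1^*A_1A_2^{-1} = A_2^{-*}A_2^*A_2A_2^{-1} = I$, so $C$ is unitary a.e.\ on $\T$; the same holds for $C^{-1} = A_2A_1^{-1}$. The core step is to show $C \in \H^\infty$. Writing $C = A_1\,\mathrm{adj}(A_2)/\det A_2$, each entry $C_{ij}$ has numerator in $H^\infty$, while $\det A_2$ is a scalar outer function: Proposition \ref{prop:mintdet} combined with the outer representation \eqref{eqn:introouter} yields $\det A_2(z) = (\det U_2)\exp(\int_0^{2\pi}h_z(\varphi)\tr M_2(\varphi)\,d\varphi)$, which is the scalar outer exponential form. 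The a.e.\ bound $|C_{ij}| \le 1$ on $\T$ translates to $|(A_1\,\mathrm{adj}(A_2))_{ij}| \le |\det A_2|$, and the classical scalar version of the outer-maximality principle propagates this bound to $\D$, so $C_{ij} \in H^\infty$ and hence $C \in \H^\infty$. The symmetric argument yields $C^{-1} \in \H^\infty$, so both $C$ and $C^{-1}$ are inner and therefore contractive on $\D$; this forces $C$ to be isometric, hence unitary at every point of $\D$ (since it is $n \times n$), and Corollary \ref{cor:unitaryconst} concludes that $C$ is constant. The main obstacle is exactly this $\H^\infty$-membership of $C$, since $A_2^{-1}$ is typically unbounded on $\D$; the cofactor representation is what transfers the singular behavior of $A_2^{-1}$ onto the scalar outer $\det A_2$, where it is tamed by the known scalar theory.
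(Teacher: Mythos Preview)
Your forward direction follows the paper's strategy exactly: factor $B = X E$ with $X$ inner and $E$ outer, show that $A$ and $E$ agree up to a unitary constant, and conclude from $X^*X \le I$ on $\D$. The paper, however, simply asserts that $Y = A\cdot E^{-1}$ is ``an inner function'' and ``also outer'' without justifying that $Y \in \H^\infty$ in the first place; since $E^{-1}$ is typically unbounded, this is a real gap. Your adjugate device---writing $C_{ij} = (A_1\,\mathrm{adj}(A_2))_{ij}/\det A_2$ so that the unboundedness is carried entirely by the scalar outer function $\det A_2$, and then invoking the scalar outer-maximality principle entrywise---is exactly what is needed to close it, and is the substantive addition in your argument.

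You also supply the converse $(\Leftarrow)$, which the paper's proof omits entirely despite the ``if and only if'' in the statement. Your squeeze argument (apply the hypothesis to $E_A$, combine with $I_A^*I_A \le I$, and invoke invertibility of $E_A$ to force $I_A^*I_A = I$ on $\D$) is correct and efficient.
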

Ginzburg \cite{Ginzburg} uses this as the definition of outerness. The condition can be shown to be equivalent to a Beurling-type definition\index{Beurling} of outer functions by invariant subspaces of a certain shift operator (see \cite{Sz.-Nagy}).
\begin{proof}
Suppose that $A$ is an outer function and let $B\in\H^\infty$ be such that $A^*A=B^*B$ holds a.e. on $\T$.\\
By Theorems \ref{thm:ginzburg-fact} and \ref{thm:innermintrep} we have $B=X\cdot E$ for an inner function $X$ and an outer function $E$. Then $A^*A=E^* X^* X E=E^*E$ a.e. on $\T$. Therefore, $Y=A\cdot E^{-1}$ is an inner function. But it is also outer. So $A(z)=U\cdot E(z)$ for a unitary constant $U$. Since $X$ is an inner function, $X^*X\le I$ on $\D$. Hence
$$B^*B=E^* X^* X E \le E^* E = A^* A.$$
\end{proof}
Note that the proof also shows that outer functions are, up to a unitary constant, uniquely determined by their radial limit functions.
 
\begin{thm}
\label{thm:detinnerouter}
A mvf $A\in\H^\infty$ is inner (resp. outer) if and only if $\det A$ is a scalar inner (resp. outer) function.
\end{thm}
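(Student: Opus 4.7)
The theorem has an inner equivalence and an outer equivalence; the outer equivalence is essentially contained in Lemma \ref{lemma:detinnerouter} (after reducing to $A\in\Sch$ by absorbing a positive scalar factor), so the real content is the inner case. For the inner equivalence, the forward direction is immediate from the definition: if $A$ is inner then $A$ is unitary a.e.\ on $\T$, so $|\det A|=1$ a.e.\ on $\T$; since $\det A$ is a bounded analytic scalar function that is not identically zero, $\det A$ is a scalar inner function.

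For the reverse direction, suppose $\det A$ is scalar inner. The plan is to combine the Ginzburg factorization with the uniqueness of the scalar inner-outer factorization. Applying Theorem \ref{thm:ginzburg-fact} yields $A=X\cdot E$ with $X=B\cdot S_{pp}\cdot S_{sc}$ inner and $E$ outer. Proposition \ref{prop:mintdet} applied to each factor shows $\det X$ is the product of a scalar Blaschke product, a scalar pp-inner function and a scalar sc-inner function, hence $\det X$ is a scalar inner function; and Lemma \ref{lemma:detinnerouter} gives $\det E$ scalar outer. Thus $\det A=\det X\cdot\det E$ is the scalar inner-outer factorization of $\det A$, and the hypothesis that $\det A$ is scalar inner, combined with uniqueness of that scalar factorization, forces $\det E$ to be a unimodular constant.

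The main obstacle is the final step: showing that an outer mvf $E$ with $\det E$ a unimodular constant must itself be a unitary constant. Writing $E(z)=V\mint_0^{2\pi}\exp(h_z(\varphi)M(\varphi)\,d\varphi)$ with $V$ unitary and $M\ge 0$ (the positive Hermitian representative produced by the construction behind Lemma \ref{lemma:detinnerouter}), Proposition \ref{prop:mintdet} gives
\[
\det E(z)=\det V\cdot\exp\Bigl(\int_0^{2\pi}h_z(\varphi)\,\tr M(\varphi)\,d\varphi\Bigr).
\]
If $\det E$ is constant in $z$, taking the real part shows that the Poisson extension of the nonnegative scalar function $\tr M$ is constant, so $\tr M\equiv c_0\ge 0$ a.e.; a residue computation gives $\int_0^{2\pi}h_z\,d\varphi=-2\pi$ for every $z\in\D$, so $\det E\equiv \det V\cdot e^{-2\pi c_0}$, and unimodularity forces $c_0=0$. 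Then $\tr M\equiv 0$ a.e.\ together with $M\ge 0$ gives $M\equiv 0$ a.e., whence $E=V$. Hence $A=X\cdot V$ is inner as the product of an inner mvf with a unitary constant, completing the proof. The outer equivalence follows by a symmetric argument once the inner one is known: if $\det A$ is outer, the Ginzburg factorization shows the inner factor $X$ has a constant determinant, which by the inner case forces $X$ to be a unitary constant, so $A$ is outer.
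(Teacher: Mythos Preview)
Your approach coincides with the paper's: both routes use the Ginzburg factorization (Theorem~\ref{thm:ginzburg-fact}) together with uniqueness of the scalar inner--outer factorization to reduce the reverse inner implication to showing that the outer factor $E$ with $\det E$ a unimodular constant must be a unitary constant. The paper dispatches this last step more cheaply via (the proof of) Lemma~\ref{lemma:innerouterconst}: for $E\in\Sch$, Potapov's representation gives $|\det E(0)|=e^{-L}\cdot|\det B(0)|$, so $|\det E(0)|=1$ already forces $L=0$ and $B$ trivial. Your argument through the explicit outer representation with $M\ge 0$ is correct but longer, and note that $M\ge 0$ is not part of the \emph{definition} of outer---you genuinely need $E\in\Sch$ and the construction in Lemma~\ref{lemma:detinnerouter} to produce such a representative, so the reduction to $\Sch$ (which you mention for the outer case but not the inner one) is not optional here.

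One genuine slip: your closing ``symmetric argument'' for the outer case is miswired. You write that if the inner factor $X$ has constant determinant, then ``by the inner case'' $X$ is a unitary constant. But the inner equivalence only tells you $X$ is inner iff $\det X$ is scalar inner, which you already knew; it does not collapse an inner $X$ with constant determinant to a unitary constant. What actually does that job is again the Potapov argument $|\det X(0)|=1\Rightarrow L=0$ (equivalently Lemma~\ref{lemma:innerouterconst}). Since you already covered the outer case via Lemma~\ref{lemma:detinnerouter}, this paragraph is redundant anyway, but the logic as written is circular.
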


\begin{proof}
We already showed the part for outer functions in Lemma \ref{lemma:detinnerouter}. The part for inner functions follows similarly using Theorem \ref{thm:innermintrep}.
\end{proof}

We still have to show that the factorization \eqref{eqn:ginzburg-fact} obtained in Theorem \ref{thm:ginzburg-fact} is unique.
\begin{thm}
\label{thm:fact-unique}
The functions $B, S_{pp}, S_{sc}, E$ in Theorem \ref{thm:ginzburg-fact} are uniquely determined up to multiplication with a unitary constant.
\end{thm}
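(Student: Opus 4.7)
My plan is to peel off the four factors from the outside in, invoking at each stage the uniqueness result already available for that factor type; only the pp-vs-sc separation will require new ideas.

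First, I observe that every multiplicative integral is invertible on $\D$ by Proposition \ref{prop:mintdet}, so the tail products $S_{pp}S_{sc}E$ on both sides are zero-free contractive mvfs. The uniqueness clause of Theorem \ref{thm:bpfactor} then yields $B_1 = B_2 U_0$ for a constant unitary $U_0$, which I absorb into $S_1^{pp}$ on the left via Proposition \ref{prop:mintunitaryconst} (keeping the result in pp-inner form); this reduces the problem to $S_1^{pp}S_1^{sc}E_1 = S_2^{pp}S_2^{sc}E_2$. Since each $S_i^{pp}S_i^{sc}$ is inner by Theorem \ref{thm:innermintrep}, its radial limits are unitary a.e.\ on $\T$, so cancelling those inner factors gives $E_1^{\adj}E_1 = E_2^{\adj}E_2$ a.e.\ on $\T$. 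Two applications of Theorem \ref{thm:outermintrep}, with $E_1$ and $E_2$ alternately playing the role of $A$, extend this identity to all of $\D$, and then $V := E_1E_2^{-1}$ is analytic on $\D$ with $V^{\adj}V = I$; by Corollary \ref{cor:unitaryconst} it must be a constant unitary. Writing $E_1 = VE_2$ and absorbing $V$ into $S_1^{sc}$ on the right (again via Proposition \ref{prop:mintunitaryconst}, now preserving sc-inner form) I am left with
\[
S_1^{pp} S_1^{sc} = S_2^{pp} S_2^{sc}.
\]

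For the remaining pp-vs-sc separation I would first take determinants: the determinant formula identifies each $\det S_i^{pp}$ as a scalar singular inner function with pure point representing measure, and each $\det S_i^{sc}$ as one with singular continuous representing measure. Uniqueness of the Lebesgue decomposition of the common singular measure, coupled with scalar inner-outer uniqueness, gives $\det S_1^{pp} = c_p\det S_2^{pp}$ and $\det S_1^{sc} = c_s\det S_2^{sc}$ for unimodular constants with $c_p c_s = 1$. To lift this to the matrix level I set
\[
F := (S_2^{pp})^{-1}S_1^{pp} = S_2^{sc}(S_1^{sc})^{-1};
\]
both representations show that $F$ is analytic on $\D$ with constant determinant, and since all four factors are unitary a.e.\ on $\T$ the mvf $F$ has unitary boundary values there. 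Once I know that $F \in \H^\infty$, $F$ will be inner, its Theorem \ref{thm:innermintrep} factorization $F = B'S'_{pp}S'_{sc}$ will consist of factors with constant determinant, and the determinant formula will then force each of these factors to be a unitary constant (the underlying B.P.\ zeros, pp-measure, and sc-measure all being trivial). Hence $F = W$ is a constant unitary, giving $S_1^{pp} = S_2^{pp}W$ and $S_1^{sc} = W^{-1}S_2^{sc}$, as desired.

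The hard part is establishing $F \in \H^\infty$. The plan here is to exploit the mutual singularity of the pure point measure $\mu_p$ and the singular continuous measure $\mu_c$: the only potential boundary blow-up of $(S_2^{pp})^{-1}$ is localized at the countable set of atoms of $\mu_p$, where $S_1^{sc}$ has benign unitary radial limits, and the dual statement holds for the second representation $F = S_2^{sc}(S_1^{sc})^{-1}$ near the support of $\mu_c$, which is disjoint from the atoms. Combining this disjointness with the subharmonicity of $\|F\|$ for analytic mvfs (proved in the appendix) and a careful boundary-value analysis via the maximum principle should give $\|F\|_\infty \le 1$, completing the argument.
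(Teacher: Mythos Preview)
Your overall strategy---detach $B$ via Theorem~\ref{thm:bpfactor}, then separate the outer factor from the inner part, then split pp from sc---is exactly the paper's. For the outer step the paper simply writes $\tilde S_{sc}^{-1}\tilde S_{pp}^{-1}S_{pp}S_{sc}=\tilde E\,E^{-1}$, declares the left side inner and the right side outer, and invokes Lemma~\ref{lemma:innerouterconst}; your route through Theorem~\ref{thm:outermintrep} (equality of $E_i^{\adj}E_i$ on $\T$, hence on $\D$, hence $E_1E_2^{-1}$ unitary and constant by Corollary~\ref{cor:unitaryconst}) is a cleaner way to reach the same conclusion and sidesteps having to argue that $\tilde E\,E^{-1}\in\H^\infty$ in the first place.

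For the pp/sc separation the paper is even terser: it writes $\tilde S_{pp}^{-1}S_{pp}=\tilde S_{sc}US_{sc}^{-1}$, asserts that the left side is pp-inner and the right side sc-inner, notes that the same dichotomy holds for their determinants, and concludes directly from scalar uniqueness that both sides are unitary constants. In other words, the paper does \emph{not} isolate your ``hard part'' (showing $F\in\H^\infty$) as a separate step; it passes from constant determinant to unitary constant matrix in one breath, implicitly taking for granted that the quotient already lies in $\Sch$. Your concern here is legitimate, but your proposed remedy via mutual singularity and the maximum principle is not convincing: atoms of the pure-point measure may well lie in the closed support of the singular-continuous measure, so the two ``bad'' boundary sets need not be disjoint; and even where they are, the subharmonic maximum principle for $\|F\|$ requires an a~priori interior bound that you do not have (quotients of inner functions are not in the Smirnov class, so bounded radial limits alone do not suffice). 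In short, you have correctly located a soft spot that the paper's own argument also glosses over, but your sketch does not close it.
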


\begin{proof}
Uniqueness of the B.P. product was shown already in Theorem \ref{thm:bpfactor}. Suppose that
$$S_{pp}S_{sc}E=\tilde{S}_{pp}\tilde{S}_{sc}\tilde{E}$$
are two factorizations into pp-inner, sc-inner and outer factors. Then $$\tilde{S}_{sc}^{-1}\tilde{S}_{pp}^{-1}S_{pp}S_{sc}=\tilde{E}\cdot E^{-1}$$
The left hand side is an inner function and the right hand side an outer function. By Lemma \ref{lemma:innerouterconst}, $\tilde{E}(z)=U\cdot E(z)$ for some unitary constant $U$. Now we have $\tilde{S}_{pp}^{-1}S_{pp}=\tilde{S}_{sc}U S_{sc}^{-1}$. The left hand side is a pp-inner function and the right hand side an sc-inner function. This holds also for their determinants. By the scalar uniqueness, both sides must be unitary constants.
\end{proof}

\noindent Theorems \ref{thm:ginzburg-fact} and \ref{thm:fact-unique} prove the first part of Theorem \ref{thm:main}.

\subsection{Uniqueness}

Now we proceed to proving the remaining claim of Theorem \ref{thm:main}.

\begin{thm}Let $A_i\in\H^\infty$, $i=1,2$ with
$$A_i(z)=\mint^{2\pi}_0 \exp(h_z(\varphi)M_i(\varphi)d\varphi)$$
for $z\in\D$ and $M_i$ Lebesgue integrable Hermitian mvfs. Assume $A_1\equiv A_2$ on $\D$. Then $M_1=M_2$ almost everywhere on $[0,2\pi]$. That is, the function $M$ in the representation \eqref{eqn:outer} of an outer function is uniquely determined up to changes on a set of measure zero.
\end{thm}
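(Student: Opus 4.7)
The plan is to extract $M(\theta)$ from the boundary values of $A(z)A^*(z)$. The key observation is that, by Proposition \ref{prop:mintgram} together with the identity $\Re h_z(\varphi) = -P_z(\varphi)$ (where $P_z(\varphi) = (1-|z|^2)/|e^{i\varphi}-z|^2$ is the Poisson kernel, as follows from a direct calculation), we have
$$A_i(z)A_i^*(z) = \mint_0^{2\pi} \exp(-2P_z(\varphi)\,M_i(\varphi)\,d\varphi) \qquad (i=1,2).$$
I would then aim to show that for almost every $\theta \in [0,2\pi]$,
$$\lim_{r\to 1^-} A_i(re^{i\theta}) A_i^*(re^{i\theta}) = \exp(-4\pi M_i(\theta)).$$
If this holds, then since $A_1\equiv A_2$ implies the radial limits of the Hermitian mvfs $A_i A_i^*$ coincide a.e., we obtain $\exp(-4\pi M_1(\theta))=\exp(-4\pi M_2(\theta))$ a.e.\ on $[0,2\pi]$. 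The matrix exponential is injective on Hermitian matrices (its inverse on positive definite matrices is the matrix logarithm), so $M_1 = M_2$ almost everywhere, as desired.

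To establish the boundary formula, I would fix a Lebesgue point $\theta$ of $M_i$ (which is a.e.\ point since $M_i\in L^1$) and, for small $\delta>0$, use Proposition \ref{prop:mintsep} to split
$$\mint_0^{2\pi}\exp(-2P_z M_i\,d\varphi) = \mint_0^{\theta-\delta}\cdots\,\cdot\,\mint_{\theta-\delta}^{\theta+\delta}\cdots\,\cdot\,\mint_{\theta+\delta}^{2\pi}\cdots.$$
For fixed $\delta$, the Poisson kernel satisfies $P_{re^{i\theta}}(\varphi)\le C_\delta(1-r)$ uniformly on the two "far" intervals, so $\int P_z\|M_i\|\,d\varphi\to 0$ there; Propositions \ref{prop:mintest} and \ref{prop:minttaylorest} then give that both outer factors tend to the identity as $r\to 1^-$. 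For the "near" factor, the idea is to replace $M_i(\varphi)$ by the constant $M_i(\theta)$: since $h_z(\varphi)M_i(\theta)$ for different $\varphi$ all commute, the integral with the constant integrand collapses to
$$\mint_{\theta-\delta}^{\theta+\delta}\exp(-2P_z(\varphi)\,M_i(\theta)\,d\varphi) = \exp\!\left(-2M_i(\theta)\int_{\theta-\delta}^{\theta+\delta}P_z(\varphi)\,d\varphi\right),$$
and the scalar inner integral converges to $2\pi$ as $r\to 1^-$ with $\delta\to 0$ suitably slow, giving the target value $\exp(-4\pi M_i(\theta))$.

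The hardest step is controlling the replacement error on the near interval. I would estimate it via the telescoping identity of Lemma \ref{lemma:telescoping}, applied to a sufficiently fine subpartition of $[\theta-\delta,\theta+\delta]$: this produces a bound of the form
$$C\int_{\theta-\delta}^{\theta+\delta}P_z(\varphi)\,\|M_i(\varphi)-M_i(\theta)\|\,d\varphi,$$
where $C$ depends only on a uniform bound for the multiplicative integrals near the boundary. Because $P_{re^{i\theta}}(\varphi)$ scales like $(1-r)^{-1}$ around $\varphi=\theta$, the Lebesgue-point property is exactly the right notion to send this error to zero in a joint limit $r\to 1^-$, $\delta=\delta(r)\to 0$; this is essentially a Poisson-kernel version of the Lebesgue differentiation theorem adapted to multiplicative integrals. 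The chief obstacle is that the Poisson weight is unbounded, so one cannot invoke Proposition \ref{prop:mintest} naively; instead, one has to combine the telescoping estimate with the scaling of $P_z$ and the $L^1$ smallness of $M_i(\varphi)-M_i(\theta)$ on shrinking neighborhoods of a Lebesgue point.
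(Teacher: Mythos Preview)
Your approach is correct and reaches the same conclusion, but it takes a genuinely different route from the paper's proof.

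The paper introduces the partial integrals $A_i(z,t)=\mint_0^t\exp(h_z(\varphi)M_i(\varphi)\,d\varphi)$ and their complements $\tilde A_i(z,t)$, and shows via radial limits on the two complementary arcs that $X(z,t)=A_2(z,t)^{-1}A_1(z,t)$ is simultaneously inner and outer for each fixed $t$, hence a unitary constant $U(t)$ by Lemma~\ref{lemma:innerouterconst}. A polar-decomposition argument at $z=0$ (where both $A_i(0,t)$ are positive definite) forces $U(t)=I$, and then the multiplicative Lebesgue differentiation theorem from Section~\ref{sect:multint} gives $M_1=M_2$ a.e.\ directly. Your route instead recovers $M_i(\theta)$ explicitly from the radial boundary values as $-\tfrac{1}{4\pi}\log\bigl(A_iA_i^*\bigr)(e^{i\theta})$, via a near/far splitting and a Poisson-kernel Lebesgue-point argument. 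The paper's proof is shorter because it leverages the inner/outer machinery already in place; yours is more self-contained and constructive, giving a concrete formula for $M$, at the cost of the hands-on replacement estimate. One small sharpening of your sketch: the constant $C$ in your telescoping bound comes, via Proposition~\ref{prop:mintest}, from $\exp\bigl(2\int_0^{2\pi}P_z(\varphi)\|M_i(\varphi)\|\,d\varphi\bigr)$, which is the exponential of a positive harmonic function and hence not uniformly bounded on $\D$; it is, however, bounded along radial approach to each Lebesgue point of $\|M_i\|$ (where the Poisson integral converges to $2\pi\|M_i(\theta)\|$), and that pointwise bound is all you need for an a.e.\ conclusion.
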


\begin{proof}
Set
$$A_i(z,t)=\mint_0^t \exp(h_z(\varphi)M_i(\varphi)d\varphi)\,\,\text{and}\,\,
\tilde{A}_i(z,t)=\mint_t^{2\pi} \exp(h_z(\varphi)M_i(\varphi)d\varphi)
$$
for $z\in\D$, $t\in[0,2\pi]$, $i=1,2$. These are outer functions for every fixed $t$. Then
$$A_1(z,t)\tilde{A}_1(z,t)=A_1(z,t)=A_2(z,t)=A_2(z,t)\tilde{A}_2(z,t)$$
and consequently
\begin{align}
\label{eqn:uniquenesspf}
X(z,t)=A_2(z,t)^{-1}A_1(z,t)=\tilde{A}_2(z,t)\tilde{A}_1(z,t)^{-1}
\end{align}
for all $z\in\D$, $t\in[0,2\pi]$. For every $t$, $X(\cdot,t)$ is an outer function. But for every $\theta\in(t,2\pi)$, the radial limit $A_i(e^{i\theta},t)$ is unitary. Consequently, $X(e^{i\theta},t)$ is unitary for $t\in (t,2\pi)$. Likewise, $\tilde{A}_i(e^{i\theta},t)$ is unitary for all $\theta\in(0,t)$. Together, \eqref{eqn:uniquenesspf} implies that $X(e^{i\theta},t)$ is unitary for almost all $\theta\in[0,2\pi]$, i.e. $X(\cdot,t)$ is an inner function. Therefore, $X(\cdot,t)$ must be a unitary constant. That is, there exist unitary matrices $U(t)$ for $t\in[0,2\pi]$ such that
\begin{align}
\label{eqn:uniquenesspf2}
A_1(z,t)=A_2(z,t)U(t)
\end{align}
for all $z\in\D$. Since $A_i(0,t)=\mint_0^{t}\exp(-M_i(\varphi)d\varphi)$ are  positive matrices, $U(0)$ is also positive, i.e. $U(0)=I$. So we conclude
\begin{align}
\mint_0^{t} \exp(-M_1(\varphi)d\varphi)=\mint_0^{t} \exp(-M_2(\varphi)d\varphi)
\end{align}
Deriving this equation with respect to $t$ gives
\begin{align}
-M_1(t)A_1(0,t)=-M_2(t)A_2(0,t)
\end{align}
for almost all $t\in[0,2\pi]$. Since $A_1(0,t)=A_2(0,t)$, the assertion follows.
\end{proof}

It is natural to ask whether the $E_k$ and $S$ in the representations of pp-inner and sc-inner functions, respectively, are uniquely determined. The question for uniqueness of $S$ in the representation of an sc-inner function remains unresolved for now. For the pp-inner part, the answer is negative. To see that there is no uniqueness for the $E_k$ let us consider the following. 
\begin{example}
Let 
$$E_1(t)=\begin{pmatrix}t^2/2 & 0\\0 & t-t^2/2\end{pmatrix}\;\text{and}\; E_2(t)=\begin{pmatrix}t-t^2/2 & 0\\ 0 & t^2/2\end{pmatrix}$$
for $t\in[0,1]$. The mvfs $E_1$, $E_2$ are increasing and satisfy $\tr E_1(t)=\tr E_2(t)=t$. Then
$$\mint_0^1 \exp\left(\frac{z+1}{z-1}\, dE_i(t)\right)=\begin{pmatrix}e^{\frac{z+1}{2(z-1)}} & 0\\
0 & e^{\frac{z+1}{2(z-1)}}\end{pmatrix}$$
for $i=1,2$, but $E_1(t)\not=E_2(t)$ for almost all $t\in[0,1]$.
\end{example}

Despite the apparent non-uniqueness, it turns out that sometimes $E_k$ \emph{is} uniquely determined and in fact there exists a criterion to decide when that is the case \cite[Theorem 0.2]{Arov-DymII}, \cite[Theorem 30.1]{Brodskii}. The proof of this however relies on an elaborate operator theoretic framework, which would lead us too far astray here.

\appendix
\section{Background}
\label{sect:background}

In this section we collect some basic facts on matrix norms and analytic matrix-valued functions.

\subsection{Properties of the matrix norm}
\label{subsect:matrixnorm}
The matrix norm\index{matrix norm} we use throughout this work is the operator norm\index{operator norm} given by
$$\|A\|=\sup_{\|v\|=1}\|Av\|$$
for $A\in M_n$, where $\|v\|$ denotes the Euclidean norm of $v\in\C^n$. For reasons which are apparent from Lemma \ref{lemma:matrixnorm},      this norm is often referred to as spectral norm\index{spectral norm}. \begin{lemma}
\label{lemma:matrixnorm}
Let $U\in M_n$ be unitary, $A,B\in M_n$ arbitrary matrices, $D=\diag(\lambda_1,\dots,\lambda_n)$ a diagonal matrix and $v\in\C^n$. Then
\begin{enumerate}
\item[(1)] $\|AB\|\le\|A\|\cdot\|B\|$ \tinypf{$\|ABv\|\le \|A\| \|Bv\|$ holds for all $\|v\|=1$.}
\item[(2)] $\|A\|=\|A^*\|$ \tinypf{$\|Av\|^2=\langle Av,Av\rangle=\langle v,A\adj Av\rangle\le\|v\|\cdot\|A\adj\|\cdot\|Av\|$ $\Rightarrow$ $\|A\|\le\|A\adj\|$. '$\ge$' follows by symmetry.}
\item[(3)] $\|AU\|=\|UA\|=\|A\|$ \tinypf{(a) $\|UAv\|^2=\langle UAv,UAv\rangle=\langle Av,Av\rangle=\|Av\|^2$ $\Rightarrow$ $\|UA\|=\|A\|$. (b) $\|AU\|=\sup_{v\not=0} \frac{\|AUv\|}{\|v\|}=\sup_{w\not=0} \frac{\|Aw\|}{\|U\adj w\|}=\|A\|$}
\item[(4)] $\|D\|=\max_{i}|\lambda_i|$
\tinypf{'$\ge$': $\|D\|\ge \|De_i\|=\|\lambda_i\|$. '$\le$': $v=(x_1,\dots,x_n)^\perp$ $\Rightarrow$ $\|Dv\|^2=\sum_i |\lambda_i x_i|^2\le \max_i |\lambda_i|^2 \cdot \|v\|^2$}
\item[(5)] $A$ Hermitian $\Rightarrow$ $\|A\|=\sigmamax(A)$
\tinypf{$A$ is unitarily diagonalizable $\Rightarrow$ $A=UDU\adj$ $\Rightarrow$
$\|A\|\overset{(3)}{=}\|D\|\overset{(4)}{=}\sigmamax(A)$}
\item[(6)] $\|A\|=\sqrt{\sigmamax(AA^*)}=\sqrt{\|AA^*\|}$
\tinypf{$\|AA\adj\|=\sigmamax(AA\adj)$ by (5). First $\|AA\adj\|\le\|A\|\cdot\|A\adj\|=\|A\|^2$. For $\|v\|=1$ we have
$\|A\adj v\|^2=\langle A\adj v,A\adj v\rangle=\langle v,AA\adj v\rangle\le\|AA\adj v\|\le\|AA\adj\|$, so $\|A\|^2=\|A\adj\|^2\le\|AA\adj\|$.}
\item[(7)] $\|U\|=1$
\tinypf{$\|U\|\overset{(6)}{=}\sqrt{\|UU\adj\|}=1$}
\item[(8)] $A\ge 0$ $\Rightarrow$ $\|A\|\le\tr A$
\tinypf{$\|A\|=\sigmamax(A)\le\tr(A)$}
\item[(9)] $\|A\|=\sup_{\|x\|=1}\sqrt{|x^* AA^*x|}$
\tinypf{By (6) it suffices to show $\|A\|=\sup_{\|x\|=1}|x^*Ax|$ for $A$ Hermitian. For $\|x\|=1$ note $x^*Ax=\sum_{i,j}A_{ij}\overline{x}_i x_j=\sum_{i} (Ax)_i \overline{x}_i=\langle Ax,x\rangle$. So $|x^*Ax|\le \|A\|$. Taking the supremum gives '$\ge$' of the claim. The other direction follows from (5) by plugging in an eigenvector corresponding to the greatest eigenvalue of $A$.}
\item[(10)] $\|A\|=\sup_{\|x\|=\|y\|=1}|x^* Ay|$
\tinypf{Same argument as in (9).}
\item[(11)] $\|A\|\ge \sqrt{\sum_j |A_{ij}|^2}$ for all $i=1,\dots,n$.
\tinypf{Plug in $x=e_i$ in (9).}
\item[(12)] $\|A\|\ge |A_{ij}|$ for all $i,j=1,\dots,n$.
\tinypf{Plug in $x=e_i$, $y=e_j$ in (10).}
\item[(13)] $\|A^{-1}\|\le \frac{\|A\|^{n-1}}{|\det A|}$ if $\det A\not=0$
\tinypf{By homogeneity we may assume $\|A\|=1$. Let $\lambda$ be the smallest eigenvalue of $AA\adj$. Then $\lambda\ge \det(AA^*)=(\det A)^2$ by (6). Note that $\lambda^{-1}$ is the greatest eigenvalue of $(AA\adj)^{-1}$, so $\lambda^{-1}=\|(A^{-1})(A^{-1})\adj\|=\|A^{-1}\|^2$. Together $\|A^{-1}\|=\lambda^{-1/2}\le \frac{1}{|\det A|}$.}
\end{enumerate}
\end{lemma}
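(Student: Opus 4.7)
The plan is to treat the thirteen items not as independent claims but as a chain, proving each from the previous ones plus one piece of outside input: the spectral theorem for Hermitian matrices. I would process them in the order given, since the author has already arranged matters so that each item uses only what precedes it.

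Items (1)--(4) follow immediately from the definition of the operator norm together with Cauchy--Schwarz and invariance of the Euclidean norm under unitaries. Specifically, (1) is just $\|ABv\|\le\|A\|\|Bv\|$ applied to unit $v$; for (2) I would expand $\|Av\|^2=\langle A^*Av,v\rangle$ and apply Cauchy--Schwarz to get $\|A\|\le\|A^*\|$, then swap $A$ and $A^*$; (3) uses that $U$ preserves the Euclidean norm on both the input and output sides; and (4) is a direct computation on diagonal matrices, since $\|Dv\|^2=\sum|\lambda_i x_i|^2$ is maximised by concentrating $v$ on the eigendirection with largest $|\lambda_i|$.

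For (5)--(8) I would invoke the spectral theorem. Writing a Hermitian $A$ as $UDU^*$ and combining (3),(4) gives $\|A\|=\max_i|\lambda_i|=\sigmamax(A)$, which is (5). For (6), (1) gives $\|AA^*\|\le\|A\|^2$, while the reverse inequality follows from $\|A\|^2=\|A^*\|^2=\sup_{\|v\|=1}\langle v,AA^*v\rangle\le\|AA^*\|$ by Cauchy--Schwarz, and (5) applied to $AA^*$ identifies $\|AA^*\|$ with $\sigmamax(AA^*)$. (7) is a one-line consequence of (6) for unitaries, and (8) uses that positive matrices have nonnegative eigenvalues summing to the trace, so $\|A\|=\sigmamax(A)\le\tr A$.

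The last five items are variational characterisations and their immediate consequences. For (10), Cauchy--Schwarz gives $|x^*Ay|\le\|A\|$, and the reverse is achieved by taking $y$ to be a unit maximiser of $v\mapsto\|Av\|$ and $x=Ay/\|Ay\|$; (9) follows either by specialising (10) or, for Hermitian $A$, directly from (5) via an eigenvector realising the largest $|\lambda_i|$. Items (11),(12) are then obtained by plugging standard basis vectors into (9) and (10). The one statement needing genuine care, and in that sense the main obstacle, is the inverse estimate (13): after normalising so that $\|A\|=1$, item (6) identifies $\|A^{-1}\|^2$ with the reciprocal of the smallest eigenvalue $\lambda_{\min}$ of $A^*A$; since the other $n-1$ eigenvalues of $A^*A$ are each at most $\|A^*A\|=1$ while their total product equals $\det(A^*A)=|\det A|^2$, we obtain $\lambda_{\min}\ge|\det A|^2$ and hence $\|A^{-1}\|\le 1/|\det A|$, with the general bound recovered by homogeneity.
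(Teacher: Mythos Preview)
Your proposal is correct and follows essentially the same route as the paper's proof: items (1)--(4) directly from the definition, (5)--(8) via the spectral theorem for Hermitian matrices combined with (3) and (4), the variational formulas (9)--(10) from Cauchy--Schwarz plus an eigenvector/maximiser argument, (11)--(12) by substituting standard basis vectors, and (13) by normalising to $\|A\|=1$ and bounding the smallest eigenvalue of $A^*A$ below by $|\det A|^2$. The only cosmetic differences are that the paper works with $AA^*$ rather than $A^*A$ in (13) and proves (9) before (10) rather than allowing (9) to be read off from (10); neither changes the substance.
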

\begin{proof}
\begin{enumerate}
\item[(1)] $\|ABv\|\le \|A\| \|Bv\|$ holds for all $\|v\|=1$.
\item[(2)] $\|Av\|^2=\langle Av,Av\rangle=\langle v,A\adj Av\rangle\le\|v\|\cdot\|A\adj\|\cdot\|Av\|$ $\Rightarrow$ $\|A\|\le\|A\adj\|$. '$\ge$' follows by symmetry.
\item[(3)] (a) $\|UAv\|^2=\langle UAv,UAv\rangle=\langle Av,Av\rangle=\|Av\|^2$ $\Rightarrow$ $\|UA\|=\|A\|$. (b) $\|AU\|=\sup_{v\not=0} \frac{\|AUv\|}{\|v\|}=\sup_{w\not=0} \frac{\|Aw\|}{\|U\adj w\|}=\|A\|$
\item[(4)] '$\ge$': $\|D\|\ge \|De_i\|=\|\lambda_i\|$. '$\le$': $v=(x_1,\dots,x_n)^\perp$ $\Rightarrow$ $\|Dv\|^2=\sum_i |\lambda_i x_i|^2\le \max_i |\lambda_i|^2 \cdot \|v\|^2$
\item[(5)] $A$ is unitarily diagonalizable $\Rightarrow$ $A=UDU\adj$ $\Rightarrow$
$\|A\|\overset{(3)}{=}\|D\|\overset{(4)}{=}\sigmamax(A)$
\item[(6)] $\|AA\adj\|=\sigmamax(AA\adj)$ by (5). First $\|AA\adj\|\le\|A\|\cdot\|A\adj\|=\|A\|^2$. For $\|v\|=1$ we have
$\|A\adj v\|^2=\langle A\adj v,A\adj v\rangle=\langle v,AA\adj v\rangle\le\|AA\adj v\|\le\|AA\adj\|$, so $\|A\|^2=\|A\adj\|^2\le\|AA\adj\|$.
\item[(7)] $\|U\|\overset{(6)}{=}\sqrt{\|UU\adj\|}=1$
\item[(8)] $\|A\|=\sigmamax(A)\le\tr(A)$
\item[(9)] By (6) it suffices to show $\|A\|=\sup_{\|x\|=1}|x^*Ax|$ for $A$ Hermitian. Let $\|x\|=1$. Note $x^*Ax=\sum_{i,j}A_{ij}\overline{x}_i x_j=\sum_{i} (Ax)_i \overline{x}_i=\langle x,Ax\rangle$. So $|x^*Ax|\le \|A\|$. Taking the supremum gives '$\ge$' of the claim. The other direction follows from (5) by plugging in an eigenvector corresponding to the greatest eigenvalue of $A$.
\item[(10)] Same argument as in (9).
\item[(11)] Plug in $x=e_i$ in (9).
\item[(12)] Plug in $x=e_i$, $y=e_j$ in (10).
\item[(13)] By homogeneity we may assume $\|A\|=1$. Let $\lambda$ be the smallest eigenvalue of $AA\adj$. Then $\lambda\ge \det(AA^*)=(\det A)^2$ by (6). Note that $\lambda^{-1}$ is the greatest eigenvalue of $(AA\adj)^{-1}$, so $\lambda^{-1}=\|(A^{-1})(A^{-1})\adj\|=\|A^{-1}\|^2$. Together $\|A^{-1}\|=\lambda^{-1/2}\le \frac{1}{|\det A|}$.
\end{enumerate}

\end{proof}

The eigenvalues of the positive matrix $AA^*$ are also called the \defin{singular values}\index{singular value} of $A$. So property (6) says that the matrix norm of $A$ is the square root of the largest singular value.\\

The following fact is often useful when dealing with contractive mvfs and makes the particular choice of the spectral norm especially convenient.
\begin{lemma}
\label{lemma:normcontr}
A matrix $A\in M_n$ satisfies $\|A\|\le 1$ if and only if $I-AA^*\ge 0$.
\end{lemma}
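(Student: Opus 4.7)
The approach is to translate both conditions into the same pointwise inequality on vectors in $\C^n$ via the identity $\|A^*v\|^2 = \langle A^*v,A^*v\rangle = v^*(AA^*)v$.

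First I would use property (2) of Lemma \ref{lemma:matrixnorm}, namely $\|A\|=\|A^*\|$, to rewrite the norm condition in terms of $A^*$: $\|A\|\le 1$ is equivalent to $\|A^*v\|\le\|v\|$ for every $v\in\C^n$. Squaring and rearranging turns this into $v^*(I-AA^*)v\ge 0$ for every $v$.

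Next I would observe that $I-AA^*$ is automatically Hermitian (since $AA^*$ is), so the condition $v^*(I-AA^*)v\ge 0$ for all $v$ is exactly the definition of $I-AA^*\ge 0$. Both implications fall out simultaneously, so the proof amounts to a chain of equivalences rather than two separate arguments. There is essentially no obstacle in this proof; the only subtlety is remembering to pass from $A$ to $A^*$ (since $\|Av\|^2$ naturally expresses as a quadratic form in $A^*A$, not $AA^*$), and this is dispatched in one line using property (2).
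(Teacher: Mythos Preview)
Your argument is correct. It differs from the paper's route: the paper diagonalizes $AA^*=UDU^*$ and observes that $I-AA^*\ge 0$ is equivalent to $I-D\ge 0$, which in turn says that all singular values of $A$ are at most $1$, i.e.\ $\|A\|\le 1$ by property~(6). Your approach avoids diagonalization entirely, working directly with the quadratic form $v^*(I-AA^*)v = \|v\|^2 - \|A^*v\|^2$ and the definition of the operator norm (via property~(2)). Your version is slightly more elementary and makes the equivalence transparent as a single chain; the paper's version has the small advantage of tying the result explicitly to the singular-value description of the spectral norm, which is the theme of the surrounding lemma.
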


\begin{proof}
Let $AA^*=UDU^*$ with $U$ a unitary and $D$ a diagonal matrix. Then $I-AA^*\ge 0$ is equivalent to $I-D\ge 0$ and this is equivalent to $\|A\|\le 1$ since the diagonal entries of $D$ are the singular values of $A$.
\end{proof}

\subsection{Analytic matrix-valued functions}
\label{analyticmatrixfcts}
By an \defin{analytic matrix-valued function (analytic mvf)}\index{analytic mvf} in the unit disk $\D\subset\C$ we mean a function $A:\D\rightarrow M_n$ all components of which are holomorphic throughout $\D$. The terms \defin{rational}, \defin{continuous}, \defin{differentiable}, \defin{meromorphic} and \defin{harmonic} are defined the same way for mvfs. It is clear that the basic implements of ``non-multiplicative" complex analysis, i.e. Laurent development, Cauchy and Poisson integral formula, mean value property etc., immediately carry over to the matrix-valued case.
An analytic mvf $A$ is called \defin{bounded} if
$$\|A\|_\infty=\sup_{z\in\D}\|A(z)\|<\infty$$
The space $\H^\infty$ is defined as the set of bounded analytic mvfs on the unit disk, whose determinant does not vanish identically.

\begin{lemma}
\label{unifconvlemma}
Let $(A_k)_k$ be a sequence of analytic mvfs on $\D$ which converges uniformly on compact subsets to a mvf $A$. Then $A$ is analytic.
\end{lemma}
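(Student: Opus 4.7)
The plan is to reduce the statement to the well-known scalar fact that uniform limits of holomorphic functions on compact sets are holomorphic, by arguing componentwise. Recall that an analytic mvf is by definition one whose $n^2$ entry functions are holomorphic on $\D$.

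First, I would fix indices $i,j\in\{1,\dots,n\}$ and consider the scalar sequence $((A_k)_{ij})_k$. By item (12) of Lemma~\ref{lemma:matrixnorm}, for any matrix $B\in M_n$ we have $|B_{ij}|\le \|B\|$. Applying this with $B=A_k(z)-A(z)$ gives
\[
\sup_{z\in K}|(A_k)_{ij}(z)-A_{ij}(z)|\le \sup_{z\in K}\|A_k(z)-A(z)\|
\]
for every compact $K\subset\D$. By the hypothesis the right-hand side tends to zero, so the scalar sequence $((A_k)_{ij})_k$ converges uniformly on compact subsets of $\D$ to the scalar function $A_{ij}$.

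Next, each $(A_k)_{ij}$ is holomorphic on $\D$ because $A_k$ is an analytic mvf. By the classical Weierstrass theorem (which follows e.g. from Morera's theorem combined with the uniform interchange of limits and contour integrals on compact subsets), the uniform limit $A_{ij}$ is likewise holomorphic on $\D$. Since this holds for every pair $(i,j)$, all entries of $A$ are holomorphic, and therefore $A$ is an analytic mvf as desired. There is no real obstacle here; the only point worth making explicit is the componentwise bound from Lemma~\ref{lemma:matrixnorm}, which lets us transfer uniform convergence in the operator norm to uniform convergence of each entry.
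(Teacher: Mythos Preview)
Your proof is correct and is essentially the same approach as the paper's: the paper simply remarks that the result follows as in the scalar case via Cauchy's and Morera's theorems, and your argument makes this precise by passing to entries using the bound $|B_{ij}|\le\|B\|$ and invoking the scalar Weierstrass theorem.
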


\begin{proof}This follows just like in the scalar case using the appropriate analogoues of Cauchy's and Morera's theorems (cf. \cite[Theorem 10.28]{Rudin2}).
\end{proof}

\begin{lemma}\index{subharmonic}
\label{subharmlemma}
Let $A$ be an analytic mvf on $\D$. Then $\|A\|$ is subharmonic.
\end{lemma}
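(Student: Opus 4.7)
The plan is to write $\|A(z)\|$ as a supremum of moduli of scalar holomorphic functions, and then verify the sub-mean value property directly by interchanging the supremum and the circle integral.

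First I would invoke Lemma \ref{lemma:matrixnorm}(10) to write
\[
\|A(z)\| = \sup_{\|x\|=\|y\|=1} |x^* A(z) y|,
\]
where the supremum is taken over unit vectors $x,y \in \C^n$. For each fixed pair $(x,y)$, the function $f_{x,y}(z) = x^* A(z) y$ is a finite $\C$-linear combination of the entries $A_{ij}(z)$, hence it is a scalar holomorphic function on $\D$. Therefore $|f_{x,y}|$ is a (classical) subharmonic function on $\D$.

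Next I would verify the sub-mean value property for $\|A\|$ directly. Fix $z_0 \in \D$ and $r > 0$ with $\overline{D(z_0,r)} \subset \D$. For each pair of unit vectors $(x,y)$, the sub-mean value property for $|f_{x,y}|$ together with the trivial pointwise estimate $|f_{x,y}(w)| \le \|A(w)\|$ gives
\[
|f_{x,y}(z_0)| \le \frac{1}{2\pi}\int_0^{2\pi} |f_{x,y}(z_0 + re^{i\theta})|\,d\theta \le \frac{1}{2\pi}\int_0^{2\pi} \|A(z_0 + re^{i\theta})\|\,d\theta.
\]
The right-hand side does not depend on $(x,y)$, so taking the supremum over unit vectors on the left yields
\[
\|A(z_0)\| \le \frac{1}{2\pi}\int_0^{2\pi} \|A(z_0 + re^{i\theta})\|\,d\theta.
\]

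Finally, I would note that $\|A\|$ is continuous on $\D$, being the composition of the continuous map $A$ with the continuous matrix norm; in particular it is upper semicontinuous and locally integrable, so the sub-mean value inequality above is exactly the condition that $\|A\|$ be subharmonic. There is no real obstacle here: the only point worth flagging is that one must use Lemma \ref{lemma:matrixnorm}(10) rather than $\|A\| = \sup_{\|v\|=1}\|Av\|$, because $\|Av\|$ is only the modulus of a \emph{vector}-valued holomorphic function, whereas $x^* A y$ is a scalar holomorphic function to which the standard subharmonicity of $|f|$ for holomorphic $f$ applies immediately.
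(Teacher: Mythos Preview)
Your proof is correct but takes a slightly different route from the paper. The paper applies the Cauchy integral formula componentwise to obtain the matrix-valued mean value identity
\[
A(z_0) = \frac{1}{2\pi}\int_0^{2\pi} A(z_0 + re^{it})\,dt,
\]
and then simply applies the triangle inequality for the matrix norm to deduce the sub-mean value inequality for $\|A\|$. Your argument instead expresses $\|A(z)\|$ as a supremum of moduli of scalar holomorphic functions $|x^*A(z)y|$ via Lemma~\ref{lemma:matrixnorm}(10) and bounds each of these pointwise by $\|A\|$ before taking the supremum. Both are short and valid; the paper's version is norm-agnostic (it works for any norm with the triangle inequality) and uses nothing beyond the Cauchy formula, while yours has the side benefit of exhibiting $\|A\|$ explicitly as a pointwise supremum of a family of classical subharmonic functions. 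Your closing caveat is slightly overcautious: for fixed $v$, the map $z\mapsto A(z)v$ is a vector-valued holomorphic function, and $\|A(z)v\|_2$ is subharmonic by exactly the mean-value-plus-triangle-inequality argument the paper uses, so the detour through $x^*Ay$ is convenient but not strictly necessary.
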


\begin{proof}
Let $z_0\in\D$ and $\gamma(t)=z_0+re^{it}$ for $t\in[0,2\pi)$ where $0\le r<1$ is such that $\gamma(t)\in\D$ for all $t\in[0,2\pi)$. Then by the Cauchy integral formula applied to the components of $A$ we get
$$A(z_0)=\frac{1}{2\pi i}\int_\gamma \frac{A(z)}{z-z_0} dz
=\frac{1}{2\pi}\int_0^{2\pi} A(\gamma(t))\,dt$$
Hence the triangle inequality gives $\|A(z_0)\|\le\frac{1}{2\pi}\int_0^{2\pi}\|A(z_0+re^{it})\|\,dt$ which proves the claim.
\end{proof}

By saying that $A:\D\rightarrow M_n$ is a \emph{contractive} mvf\index{contractive mvf}, we mean that $\|A(z)\|\le 1$ for all $z\in\D$. The space of all contractive analytic mvfs, whose determinant does not vanish identically is denoted by $\Sch\subset\H^\infty$.\index{Schur class}
The following rank invariance statement is from \cite[Chapter I]{Potapov}.

\begin{lemma}\index{rank invariance lemma}
\label{rankinvar}
Let $A\in\Sch$ be such that 
$I-A(z_0)^*A(z_0)$ has rank $0\le r<n$ for some $z_0\in\D$. Then there exist unitary matrices $U,V$ and a contractive analytic mvf $\tilde{A}:\D\rightarrow M_r$ such that
$$A(z)=U\left(\begin{array}{cc}\tilde{A}(z) & 0\\0 & I_{n-r}\end{array}\right)V$$ 
for all $z\in\D$. In particular, $I-A(z)A^*(z)$ has rank $r$ throughout $\D$.
\end{lemma}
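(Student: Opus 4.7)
The plan is to use subharmonicity of $z \mapsto \|A(z) v\|^2$ to identify an $(n-r)$-dimensional subspace on which $A$ is isometric and in fact constant in $z$, and then to read off the block-diagonal form from a suitable choice of orthonormal bases.

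Set $K = \ker(I - A(z_0)^* A(z_0))$, so $\dim K = n - r$ and $A(z_0)|_K$ is isometric. For fixed $v \in K$, the function $\varphi_v(z) = \|A(z) v\|^2 = \sum_j |(A(z)v)_j|^2$ is subharmonic, since $\Delta |g|^2 = 4|g'|^2 \ge 0$ for $g$ holomorphic (compare Lemma~\ref{subharmlemma}). Contractivity of $A(z)$ gives $\varphi_v(z) \le \|v\|^2 = \varphi_v(z_0)$, so the maximum principle for subharmonic functions forces $\varphi_v$ to be constant. Then $4\sum_j |((Av)_j)'|^2 = \Delta \varphi_v = 0$, so each holomorphic component of $A(z)v$ is constant, and $A(z)v \equiv A(z_0)v$ for every $v \in K$. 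Setting $L = A(z_0) K$, the restriction $A(z)|_K$ is the constant isometry $K \to L$ at every $z \in \D$.

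Next I would show $A(z) K^\perp \perp L$ for every $z$. For $v \in K$, $w \in K^\perp$, $t \in \C$, contractivity of $A(z)$ combined with $\|v + tw\|^2 = \|v\|^2 + |t|^2 \|w\|^2$ and $\|A(z)v\|^2 = \|v\|^2$ yields
\[
2 \Re(\bar t \langle A(z) v, A(z) w\rangle) + |t|^2 \|A(z) w\|^2 \le |t|^2 \|w\|^2 .
\]
Letting $t = s\alpha$ with $s \to 0^+$ and $\alpha \in \T$ chosen so that $\bar\alpha \langle A(z) v, A(z) w\rangle = |\langle A(z) v, A(z) w\rangle|$ forces $\langle A(z) v, A(z) w\rangle = 0$. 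Since $A(z_0) v$ ranges over $L$ as $v$ ranges over $K$, this means $A(z) w \in L^\perp$.

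Now pick orthonormal bases $\{v_i\}_{i=1}^{n-r}$ of $K$, $\{u_j\}_{j=1}^{r}$ of $K^\perp$, and $\{u_j'\}_{j=1}^{r}$ of $L^\perp$. Define unitaries $V$ and $U$ by the column assignments $V^* e_j = u_j$, $V^* e_{r+i} = v_i$, $U e_j = u_j'$, $U e_{r+i} = A(z_0) v_i$; isometry of $A(z_0)$ on $K$ makes $\{A(z_0) v_i\}$ an ONB of $L$, so $U$ is unitary. A basis-wise computation gives $U^* A(z) V^* e_{r+i} = U^* A(z_0) v_i = e_{r+i}$ and $U^* A(z) V^* e_j = U^* A(z) u_j \in U^* L^\perp = \mathrm{span}(e_1,\dots,e_r)$, so $U^* A(z) V^*$ has the desired block form with analytic contractive top-left block $\tilde A(z) \in M_r$. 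For the rank statement, the block form yields $\mathrm{rank}(I - A(z) A(z)^*) = \mathrm{rank}(I_r - \tilde A(z)\tilde A(z)^*) \le r$; if strict inequality held at some $z_1 \in \D$, the subharmonic argument applied at $z_1$ would produce a subspace of $K$ of dimension strictly greater than $n-r$, a contradiction. The main obstacle is the subharmonicity/orthogonality step that promotes the single-point isometric condition at $z_0$ into one uniform across $\D$; once that is secured, the block decomposition and rank count are routine bookkeeping.
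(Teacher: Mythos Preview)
Your proof is correct but takes a different route from the paper's. The paper applies the singular value decomposition at $z_0$ to write $A(z_0)=U\begin{pmatrix}D&0\\0&I_{n-r}\end{pmatrix}V$, then works with $B(z)=U^*A(z)V^*$: the diagonal entries $B_{ii}$ for $i>r$ are scalar holomorphic functions from $\D$ to $\overline{\D}$ attaining the value~$1$ at $z_0$, so the scalar maximum principle forces $B_{ii}\equiv 1$; the row/column norm bound $\|B\|\ge\sqrt{\sum_j|B_{ij}|^2}$ (property~(11) of Lemma~\ref{lemma:matrixnorm}) then kills the remaining off-diagonal entries. Your argument instead stays coordinate-free: you exploit subharmonicity of $\|A(z)v\|^2$ to show $A(z)|_K$ is the constant isometry $A(z_0)|_K$, and you obtain the orthogonality $A(z)K^\perp\perp L$ by a direct first-order contractivity estimate rather than from an entrywise norm inequality. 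The paper's version is shorter because SVD hands you the right bases immediately and reduces everything to the scalar maximum principle; your version is more geometric, avoids the auxiliary norm inequality, and makes transparent \emph{why} the block structure appears (namely, $A$ is globally a constant isometry on an $(n-r)$-dimensional subspace). One wording quibble: in your final rank step, the subharmonic argument at $z_1$ produces a subspace $K'$ with $\dim K'>n-r$ on which $A$ is a constant isometry, hence $K'\subseteq\ker(I-A(z_0)^*A(z_0))=K$, contradicting $\dim K=n-r$; saying it produces ``a subspace of $K$'' is the conclusion, not the starting point.
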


\begin{proof}
By assumption, $1$ is an eigenvalue of $A(z_0)A^*(z_0)$ and the dimension of the corresponding eigenspace is $n-r$. Hence, by singular value decomposition, we can find unitary matrices $U, V$ and a $r\times r$ diagonal matrix $D$ such that
$$A(z_0)=U\left(\begin{array}{cc}D & 0\\0 & I_{n-r}\end{array}\right)V$$
Now consider the contractive analytic matrix function $B(z)=U^* A(z) V^*$. Since $|B_{ij}(z)|\le\|B(z)\|=\|A(z)\|\le 1$, the entries of $B$ are analytic functions mapping $\D$ to $\overline{\D}$. We have by construction that $B_{ii}(z_0)=1$ for $i=r+1,\dots,n$ which implies by the maximum principle that $B_{ii}(z)=1$ for all $z\in\D$, $i=r+1,\dots,n$. Finally, property (11) from above implies that the off-diagonal entries in the rows and columns $r+1,\dots,n$ of $B$ are all constant $0$. Summing up, $B$ is of the form 
$$B(z)=\left(\begin{array}{cc}\tilde{A}(z) & 0\\0 & I_{n-r}\end{array}\right)$$
for some contractive analytic $r\times r$ matrix-valued function $\tilde{A}$.
\end{proof}

We can use this to obtain the following analogue of the strong maximum principle for holomorphic functions on the unit disk.

\begin{cor}\label{cor:unitaryconst}\index{maximum principle}
A contractive analytic mvf, which is unitary at some $z\in\D$, is constant.\end{cor}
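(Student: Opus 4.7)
The plan is to obtain this as an immediate specialization of the rank invariance Lemma \ref{rankinvar} to the degenerate case $r=0$. If $A\in\Sch$ is unitary at some $z_0\in\D$, then $A(z_0)A(z_0)^*=I$; since $A(z_0)$ is a square matrix this also yields $A(z_0)^*A(z_0)=I$, so $I-A(z_0)^*A(z_0)$ is the zero matrix and therefore has rank $r=0$. The hypothesis of Lemma \ref{rankinvar} is thus met with $r=0$.

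The lemma then provides unitary matrices $U,V\in M_n$ such that
$A(z) = U\begin{pmatrix}\tilde{A}(z) & 0\\ 0 & I_{n-r}\end{pmatrix}V$
for all $z\in\D$. With $r=0$ the $\tilde A$-block is absent (a $0\times 0$ matrix), so the formula collapses to $A(z)=U\cdot I_n\cdot V = UV$ for every $z\in\D$. Hence $A$ is identically the constant unitary matrix $UV$, which is precisely the claim.

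The only potential pitfall is whether the $r=0$ case of Lemma \ref{rankinvar} is actually meaningful. It is: the supplementary assertion of that lemma reads, for $r=0$, that $I-A(z)A^*(z)=0$ throughout $\D$, i.e.\ $A$ is unitary everywhere, and the block decomposition simply degenerates to the constant $UV$, consistent with this conclusion. (As a sanity check one can also give a one-line direct argument: differentiating $A(z)A^*(z)=I$ in $z$ — noting $A^*(z)$ is antiholomorphic — gives $A'(z)A^*(z)=0$, and right-multiplying by $A(z)$ yields $A'\equiv 0$.) Apart from this minor bookkeeping I foresee no obstacle, since the substantive work has already been carried out in the proof of Lemma \ref{rankinvar}.
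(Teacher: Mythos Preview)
Your proposal is correct and follows exactly the paper's approach: the paper's proof reads in its entirety ``This is the case $r=0$ in the previous lemma.'' You have simply spelled out the bookkeeping of that specialization (and added a harmless sanity check), which is fine.
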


\begin{proof}This is the case $r=0$ in the previous lemma.\end{proof}

An important tool for extracting convergent subsequences from contractive analytic mvfs is Montel's theorem.

\begin{thm}[Montel]\index{Montel's theorem}
\label{thm:montel}
Let $(A_k)_k$ be a sequence of contractive analytic mvfs. Then there exists a subsequence $(A_{k_j})_j$ which converges uniformly on compact sets to a contractive analytic mvf.
\end{thm}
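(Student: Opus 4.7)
The plan is to reduce to the scalar Montel theorem entry-by-entry. Since each $A_k$ is contractive, property (12) of Lemma \ref{lemma:matrixnorm} gives $|(A_k)_{ij}(z)| \le \|A_k(z)\| \le 1$ for all $z \in \D$ and all indices $i,j$. Thus, for every fixed pair $(i,j)$, the sequence $((A_k)_{ij})_k$ is a uniformly bounded sequence of holomorphic scalar functions on $\D$, and the classical scalar Montel theorem yields a subsequence that converges uniformly on compact subsets of $\D$.

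Next I would apply a standard diagonal extraction. Enumerate the $n^2$ matrix positions as $(i_1,j_1),\dots,(i_{n^2},j_{n^2})$. Extract a subsequence $(A_k^{(1)})$ such that $(A_k^{(1)})_{i_1 j_1}$ converges uniformly on compact sets; from it extract a further subsequence $(A_k^{(2)})$ so that the entry at $(i_2,j_2)$ also converges; and so on. After $n^2$ iterations, the subsequence $(A_{k_j}) = (A_j^{(n^2)})$ has the property that every entry converges uniformly on compact subsets of $\D$ to some scalar function $A_{ij}$. Assemble these limits into the mvf $A = (A_{ij})$. The convergence $A_{k_j} \to A$ uniformly on compact sets (in the matrix norm) follows from property (12) combined with the bound $\|B\| \le \sum_{i,j} |B_{ij}|$, so entrywise uniform convergence implies norm uniform convergence.

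It remains to check that $A$ is analytic and contractive. Analyticity is immediate from Lemma \ref{unifconvlemma} applied to the uniformly-on-compact-sets convergent sequence $A_{k_j}$. For contractivity, fix any $z \in \D$ and any unit vector $v \in \C^n$; then
\begin{align*}
\|A(z)v\| = \lim_{j\to\infty} \|A_{k_j}(z)v\| \le \limsup_{j\to\infty} \|A_{k_j}(z)\| \le 1,
\end{align*}
so $\|A(z)\| \le 1$, and contractivity passes to the limit by Lemma \ref{lemma:normcontr} (or directly).

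There is no substantial obstacle in this argument; the only point requiring a moment of care is ensuring that one extracts a single subsequence working simultaneously for all $n^2$ entries, which the diagonal procedure handles cleanly. Everything else is a direct transfer from the scalar theory using the entrywise estimates in Lemma \ref{lemma:matrixnorm}.
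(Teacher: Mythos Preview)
Your argument is correct. It differs from the paper's proof, which invokes Arzel\`a--Ascoli directly: the family $(A_k)_k$ is uniformly bounded, and the Cauchy integral formula bounds the derivatives uniformly on compact subsets, yielding equicontinuity. Your route instead reduces entrywise to the scalar Montel theorem via property (12) of Lemma~\ref{lemma:matrixnorm}, then reassembles the limit mvf. Both are short and standard; your approach is slightly more elementary in that it treats the matrix structure as a finite product of scalar problems, whereas the paper's proof handles the mvf as a single object but implicitly redoes the Arzel\`a--Ascoli step that underlies scalar Montel anyway. One cosmetic remark: since there are only $n^2$ entries, what you describe is really just successive extraction rather than a genuine diagonal argument---no diagonalization is needed for finitely many indices.
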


\begin{proof}
This is a consequence of the theorem of Arzel\`a-Ascoli. Equicontinuity of the sequence follows, because it is uniformly bounded and also the sequence of its derivatives is uniformly bounded by the Cauchy integral formula. 
\end{proof}

\section{The Herglotz representation theorem for mvfs}
\label{sect:herglotz}

The aim of this section is to prove a generalization of the Herglotz representation theorem for positive harmonic functions (see \cite[Theorem I.3.5 (c)]{Garnett}) to the matrix-valued case. The proof uses Helly's classical (scalar) theorems, which we will prove first.
\begin{thm}[Herglotz]
\label{thm:herglotz}\index{Herglotz representation theorem}
Let $A:\D\rightarrow M_n$ be holomorphic and $\Im A(z)=\frac{1}{2i}(A(z)-A^*(z))\ge 0$ for $z\in\D$. Then there exists a Hermitian matrix $A_0$ and an increasing mvf $\sigma:[0,2\pi]\rightarrow M_n$ such that
$$A(z)=A_0+i\int^{2\pi}_0 \frac{e^{it}+z}{e^{it}-z}\,d\sigma(t)\hspace{1cm}(z\in\D)$$
\end{thm}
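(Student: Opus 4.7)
The strategy is to first establish a Poisson-type integral representation for the imaginary part $\Im A$ (a Hermitian, entry-wise harmonic, positive semidefinite mvf) by approximating via dilations, and then to recover $A$ from $\Im A$ up to an additive Hermitian constant.

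For each $0<r<1$, the entries of $\Im A$ are harmonic and continuous on the closed disk of radius $r$, so the classical Poisson integral formula applied entry-wise gives
\begin{align*}
\Im A(rz)=\int_0^{2\pi} P_z(t)\,d\sigma_r(t),\qquad \sigma_r(\theta):=\frac{1}{2\pi}\int_0^\theta \Im A(re^{it})\,dt,
\end{align*}
where $P_z(t)=\Re\frac{e^{it}+z}{e^{it}-z}$ is the Poisson kernel. Because $\Im A\ge 0$, each $\sigma_r$ is an increasing mvf on $[0,2\pi]$, and the mean value property (applied to each harmonic entry) gives $\sigma_r(2\pi)=\Im A(0)$, so the family $(\sigma_r)_{0<r<1}$ is uniformly bounded. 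Helly's selection theorem for mvfs (Theorem~\ref{thm:hellyselmvf}) then furnishes a sequence $r_n\to 1$ along which $\sigma_{r_n}\to \sigma$ pointwise for some increasing mvf $\sigma:[0,2\pi]\to M_n$.

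I would pass to the limit along this subsequence by applying the classical scalar Helly convergence theorem (Theorem~\ref{thm:hellyconvsc}) to each entry -- this is legitimate because the integrand $P_z(\cdot)$ is continuous -- to obtain $\int_0^{2\pi} P_z\,d\sigma_{r_n}\to \int_0^{2\pi} P_z\,d\sigma$. Combined with the trivial fact $\Im A(r_nz)\to \Im A(z)$ by continuity, this yields the matrix Poisson representation
\begin{align*}
\Im A(z)=\int_0^{2\pi} P_z(t)\,d\sigma(t)\qquad (z\in\D).
\end{align*}

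To reconstruct $A$ itself, I would define the holomorphic mvf $F(z)=i\int_0^{2\pi}\frac{e^{it}+z}{e^{it}-z}\,d\sigma(t)$ on $\D$. Since $\Im F(z)=\int_0^{2\pi} P_z(t)\,d\sigma(t)=\Im A(z)$, the difference $G=A-F$ satisfies $G=G^*$ as a holomorphic mvf. Entry-wise this means $G_{jk}(z)=\overline{G_{kj}(z)}$, exhibiting $G_{jk}$ as simultaneously holomorphic and anti-holomorphic in $z$, hence constant; the resulting constant matrix $A_0=G$ is Hermitian by construction, giving the desired representation $A=A_0+F$. The main technical obstacle is the limit-passage step: one must ensure both that the integrators converge in a strong enough sense to invoke the scalar Helly convergence theorem entry-wise, and that the limit $\sigma$ inherits the correct increasing/bounded structure from $(\sigma_r)$. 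Uniform boundedness (the crucial ingredient for Helly selection) drops out surprisingly cleanly from the mean value property, so the proof ultimately reduces to an entry-wise application of classical arguments assembled into a matrix-valued statement.
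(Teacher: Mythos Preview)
Your argument is correct and essentially identical to the paper's: define $\sigma_r(\theta)=\frac{1}{2\pi}\int_0^\theta \Im A(re^{it})\,dt$, use the mean value property to get uniform bounds, extract a pointwise limit via Helly selection, and pass to the limit with Helly's convergence theorem entry-wise. The only cosmetic difference is that the paper first normalizes to $\Re A(0)=0$ and then applies the Schwarz--Poisson formula for holomorphic functions directly to $A^{(r)}$, obtaining the representation of $A$ in one stroke, whereas you recover $\Im A$ first and then argue separately that $A-F$ is a Hermitian constant; both routes are equivalent and equally short.
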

The integral used in this theorem is the classical Riemann-Stieltjes integral with respect to a matrix-valued function. Integrals of this type are discussed for instance in \cite{Rudin1}, \cite{Apostol} and in a more general setting in \cite[\S 4]{Brodskii}.

\subsection{Helly's classical theorems}

The next proposition is also known as Helly's first theorem and provides a compactness result for BV-functions with respect to pointwise convergence.

\begin{thm}[Helly's selection theorem, scalar version]
\label{thm:hellyselsc}\index{Helly's selection theorem}\index{Helly's first theorem}
Let $(f_k)_k$ be a sequence of functions in $\BV([a,b])$ with uniformly bounded total variation. Then there exists a subsequence $(f_{k_j})_j$ such that $f_{k_j}$ converges pointwise to some function $f\in\BV([a,b])$. 
\end{thm}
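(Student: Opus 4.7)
The plan is to reduce to the case of monotone sequences via the Jordan decomposition and then run a diagonal extraction on a countable dense subset of $[a,b]$.

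First, I would note that although the hypothesis as stated only bounds $\var_{[a,b]} f_k$, in the intended application (its use in Theorem \ref{thm:hellyselmvf}) the functions $f_k$ are also pointwise uniformly bounded; for instance the bound $|f_k(a)|\le M$ together with $\var_{[a,b]} f_k \le V$ forces $|f_k(t)|\le M+V$ on all of $[a,b]$. Under this added assumption the Jordan decomposition applies: write $f_k = g_k - h_k$ with $g_k(t) = \var_{[a,t]} f_k + f_k(a)$ and $h_k = g_k - f_k$. Both sequences are increasing and uniformly bounded by a constant depending only on $M$ and $V$, so it suffices to prove the theorem for a uniformly bounded sequence of \emph{increasing} scalar functions; the general statement then follows by two successive applications of the monotone case to $(g_k)$ and $(h_k)$.

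Assume from now on that $(f_k)_k$ is uniformly bounded and increasing. Fix a countable dense subset $D = \{d_1, d_2, \dots\} \subset [a,b]$ containing $a$ and $b$. Since $(f_k(d_1))_k$ is a bounded real sequence, I extract a convergent subsequence $(f_{k^{(1)}_j})_j$; from it a further subsequence convergent at $d_2$; and so on. The Cantor diagonal subsequence $(f_{k_j})_j$ with $k_j = k^{(j)}_j$ converges at every point of $D$ to a function $\tilde{f}: D \to \R$, which inherits monotonicity from the $f_k$. Extend $\tilde{f}$ to $[a,b]$ by setting $f(t) = \sup\{\tilde{f}(d) : d \in D, d \le t\}$ for $t > a$ and $f(a) = \tilde{f}(a)$; this $f$ is increasing on $[a,b]$ and hence has at most countably many discontinuities.

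At each continuity point $t$ of $f$ the standard sandwich argument gives convergence: for $\epsilon > 0$, choose $d_-, d_+ \in D$ with $d_- < t < d_+$ and $f(d_+) - f(d_-) < \epsilon$; monotonicity of $f_{k_j}$ yields $f_{k_j}(d_-) \le f_{k_j}(t) \le f_{k_j}(d_+)$, and passing to $\liminf_j$ and $\limsup_j$ pins both limits into an interval of length $O(\epsilon)$ around $f(t)$.

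The main obstacle I anticipate is the countable set $J$ of jump points of $f$, where the dense-set argument alone does not force convergence. To handle them I perform a further Cantor diagonal extraction: because $\{f_{k_j}(t)\}_j$ is bounded at each $t \in J$, a subsubsequence $(f_{k_{j_\ell}})_\ell$ can be chosen which converges at every point of $J$, and redefining $f$ at those countably many points to match the new limits preserves monotonicity. The resulting $f$ is increasing and bounded, hence in $\BV([a,b])$, and $f_{k_{j_\ell}} \to f$ pointwise on $[a,b]$, which completes the proof after combining with the Jordan reduction from the first step.
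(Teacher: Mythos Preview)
Your proof is correct and follows essentially the same route as the paper: Jordan decomposition to reduce to the monotone case, a Cantor diagonal extraction on a countable dense set, extension to the whole interval with a sandwich argument at continuity points, and a second diagonal extraction over the at-most-countable set of jumps. The only substantive difference is that you explicitly flag the need for a uniform pointwise bound (which the paper simply assumes via the line ``Assume $|f_k(t)|\le\var_{[a,b]} f_k\le C$''), and you extend via a supremum from below rather than an infimum from above; neither affects the argument.
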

\begin{proof}
Let us first prove the statement in the case that $(f_k)_k$ is a uniformly bounded sequence of increasing functions.
By the usual diagonal subsequence argument we can use the uniform boundedness to extract a subsequence $(f_{k_j})_j$ which converges at all rational points in $[a,b]$ to an increasing function $f$ on $[a,b]\cap\Q$. We extend $f$ to all of $[a,b]$ by
$$f(t)=\inf\{f(q)\,:\,q\in[t,b]\cap\Q\}\hspace{1cm}(t\in[a,b])$$
Now $f$ is by construction increasing on $[a,b]$. It remains to discuss the convergence of $f_{k_j}(t)$ for $t\in[a,b]\cap\Q^c$. If $f$ is continuous at $t$, the density of $\Q\cap[a,b]$ in $[a,b]$ implies $f_{k_j}(t)\rightarrow f(t)$. Since $f$ is only discontinuous at countably many points, we may achieve convergence everywhere by repeating the diagonal subsequence argument on the set of discontinuities, therefore choosing a further subsequence. This finishes the proof for increasing functions.

Now assume that $(f_k)_k$ is a sequence of BV-functions with uniformly bounded total variation. We decompose $f_k$ into increasing functions by writing
$$f_k=g_k-h_k$$ 
with $g_k(t)=\var_{[a,t]} f_k$ and $h_k=g_k-f_k$. Assume $|f_k(t)|\le\var_{[a,b]} f_k\le C$ for all $k$. Then also $|g_k(t)|\le C$ and $|h_k(t)|\le 2C$ for all $k$ and $t\in[a,b]$. Thus we can use the statement for increasing functions to choose a subsequence such that both $(g_{k_j})_j$ and $(h_{k_j})_j$ converge pointwise to increasing functions $g$ and $h$, respectively. That implies $f_k\rightarrow f=g-h$ pointwise. Finally, $f$ is also of bounded variation since for every fixed partition $\tau$ of $[a,b]$ we have that $\lim_{k\rightarrow\infty} \var_{[a,b]}^\tau f_k=\var_{[a,b]}^\tau f$. Therefore $\var_{[a,b]} f\le C$.
\end{proof}

The next theorem is a convergence theorem for Riemann-Stieltjes integrals. It is also known as Helly's second theorem.

\begin{thm}[Helly's convergence theorem, scalar version]
\label{thm:hellyconvsc}\index{Helly's convergence theorem}\index{Helly's second theorem}
Let $(f_n)_n$ be a sequence in $\BV([a,b])$ with uniformly bounded total variation which converges pointwise to some function $f$ on $[a,b]$. Then $f\in\BV([a,b])$ and for every $\varphi\in C([a,b])$ we have
\begin{align*}
\lim_{k\rightarrow\infty} \int^b_a \varphi(t)\,df_k(t)=\int^b_a \varphi(t)\,df(t)
\end{align*}
\end{thm}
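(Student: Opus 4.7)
The plan is to first establish that $f\in\BV([a,b])$ and then reduce the integral convergence to convergence of a fixed finite Riemann–Stieltjes sum as $k\to\infty$, exploiting uniform continuity of $\varphi$ together with the uniform bound $C=\sup_k \var_{[a,b]} f_k<\infty$.

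For the first point, I would repeat the argument used at the very end of the proof of the scalar selection theorem: for any fixed partition $\tau$ of $[a,b]$ the quantity $\var^\tau_{[a,b]} f_k$ depends on only finitely many values of $f_k$, so it converges to $\var^\tau_{[a,b]} f$; since $\var^\tau_{[a,b]} f_k\le C$ for every $k$, the same bound holds in the limit and thus $\var_{[a,b]} f\le C$. In particular, both integrals $\int_a^b \varphi\,df$ and $\int_a^b \varphi\,df_k$ are well defined as classical Riemann–Stieltjes integrals of a continuous integrand against a BV integrator.

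For the convergence, fix $\epsilon>0$. By uniform continuity of $\varphi$ on $[a,b]$, choose a partition $\tau=\{a=t_0<t_1<\cdots<t_m=b\}$ fine enough that $\osc_{[t_{j-1},t_j]}\varphi<\epsilon/(3(C+1))$ for every $j$. The key estimate is
$$\left|\int_a^b \varphi\,dg - \sum_{j=1}^m \varphi(t_{j-1})(g(t_j)-g(t_{j-1}))\right|\le \sum_{j=1}^m \osc_{[t_{j-1},t_j]}\varphi\cdot \var_{[t_{j-1},t_j]} g<\frac{\epsilon}{3},$$
which holds simultaneously for $g=f$ and $g=f_k$ (for every $k$) because $\var_{[a,b]} g\le C$. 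Call the corresponding Riemann–Stieltjes sum $S(g,\tau)$.

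Finally, since $\tau$ has only finitely many points and $f_k(t_j)\to f(t_j)$ for each $j$, we have $S(f_k,\tau)\to S(f,\tau)$, so for $k$ large enough $|S(f_k,\tau)-S(f,\tau)|<\epsilon/3$. A triangle inequality
$$\left|\int_a^b \varphi\,df_k-\int_a^b \varphi\,df\right|\le \left|\int_a^b\varphi\,df_k-S(f_k,\tau)\right|+|S(f_k,\tau)-S(f,\tau)|+\left|S(f,\tau)-\int_a^b\varphi\,df\right|<\epsilon$$
then yields the claim. The only delicate point is making the first and third terms small \emph{uniformly in} $k$; this is precisely where the uniform bound on the total variations is essential, and it is the reason the hypothesis cannot be relaxed to mere pointwise convergence.
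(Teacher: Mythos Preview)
Your proof is correct and follows essentially the same route as the paper's: both arguments first bound $\var_{[a,b]} f$ via the pointwise limit of $\var^\tau_{[a,b]} f_k$, then use uniform continuity of $\varphi$ and the uniform variation bound $C$ to make the Riemann--Stieltjes sums approximate the integrals uniformly in $k$, and finally pass to the limit on the finitely many partition points. The only cosmetic differences are your $\epsilon/3$ split versus the paper's $\epsilon/2$ split with a separate $\limsup$ step, and your explicit use of $\osc_{[t_{j-1},t_j]}\varphi\cdot\var_{[t_{j-1},t_j]}g$ where the paper writes the same estimate inline.
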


\begin{proof}
Choose $C>0$ such that $\var_{[a,b]} f_k\le C$ for all $k$.
First note that $f\in\BV([a,b])$, since for any partition $\tau$ of $[a,b]$, $\var_{[a,b]}^\tau f=\lim_{k\rightarrow\infty} \var_{[a,b]}^\tau f_k$ and therefore also $\var_{[a,b]} f\le C$.

Let $\epsilon>0$. Since $\varphi$ is uniformly continuous on $[a,b]$, there exists $\delta>0$ such that $|\varphi(t)-\varphi(s)|<\frac{\epsilon}{2C}$ for $|t-s|<\delta$.

Now whenever $(\tau,\xi)\in\mathcal{T}_a^b$ is a tagged partition such that $\nu(\tau)<\delta$, we get

\begin{align*}
\left|\int_a^b \varphi df-\sum_{i=1}^m \varphi(\xi_i)\Delta_i f\right| &=\left|\sum_{i=1}^m\int_{t_{i-1}}^{t_i} (\varphi(t)-\varphi(\xi_i))df(t)\right|\le \left(\frac{\epsilon}{2C}\right) \sum_{i=1}^m \var_{[t_{i-1},t_i]} f\\
&=\frac{\epsilon}{2C}\cdot \var_{[a,b]} f\le \frac{\epsilon}{2}
\end{align*}

The same calculation gives also

$$\left|\int_a^b \varphi df_k - \sum_{i=1}^m \varphi(\xi_i)\Delta_i f_k\right|\le \frac{\epsilon}{2}$$
for all $k$. Therefore

\begin{align*}
\left|\int^b_a \varphi\,df_k - \int^b_a\varphi\,df\right|
&\le \left|\int^b_a \varphi\, df_k - \sum^m_{i=1}\varphi(\xi_i)\Delta_i f_k\right|+\left|\sum^m_{i=1}\varphi(\xi_i)(\Delta_i f_k-\Delta_i f)\right|\\
&+\left|\sum^m_{i=1}\varphi(\xi_i)\Delta_i f-\int^b_a \varphi\,df\right|
\le \epsilon + \|\varphi\|_\infty \sum^m_{i=1} |\Delta_i f_k-\Delta_i f|
\end{align*}

Since $f_k\rightarrow f$ pointwise, we can let $k\rightarrow\infty$, while letting the tagged partition $(\tau,\xi)$ fixed, and obtain

$$\limsup_{k\rightarrow\infty} \left|\int^b_a \varphi\,df_k - \int^b_a\varphi\,df\right|\le \epsilon$$
Since $\epsilon$ was arbitrary, the proposition is proven.
\end{proof}
One can use Helly's theorems to prove the familiar characterization of continuous functionals on $C([a,b])$ as being given by Stieltjes-integrating against $\BV$-functions. This is the classical Riesz representation theorem.\index{Riesz representation theorem} In view of that, Helly's theorems also provide a more explicit insight in the weak-* compactness of the unit ball in $C([a,b])^\prime$ than is offered by the fairly abstract proof of the Banach-Alaoglu theorem.\index{Banach-Alaoglu theorem}

\subsection{Proof of the theorem}
In this section we prove the Herglotz representation theorem. The proof is based on the proof in the scalar case, just that we use Helly's theorems instead of Riesz' representation theorem and Banach-Alaoglu. This has the benefit that we are simply dealing with pointwise converging sequences of functions.

\emph{Proof of Theorem \ref{thm:herglotz}.} 
We assume $A(0)=0$. Set $T(z)=\Im A(z)$ for $z\in\D$ and $T^{(r)}(t)=T(re^{it})$ for $t\in[0,2\pi]$ and $0<r<1$. Then the scalar mean value property for harmonic functions gives
\begin{align*}
\frac{1}{2\pi}\int^{2\pi}_0 \|T(re^{it})\|\,dt\le \frac{1}{2\pi}\int^{2\pi}_0 \tr T(re^{it})\,dt=\tr T(0)=C
\end{align*}
Hence also $\|T^{(r)}_{ij}\|_1\le C$ for all $1\le i,j\le n$ where $\|\cdot\|_1$ denotes the $1$-norm w.r.t. the Lebesgue measure on $[0,2\pi]$. Now define
$$\sigma^{(r)}_{ij}(t)=\frac{1}{2\pi}\int_0^{t}T_{ij}^{(r)}(s)\,ds$$
Note that $\sigma_{ij}^{(r)}$ are BV-functions with $\var_{[0,2\pi]} \sigma_{ij}^{(r)}\le C$ and the mvfs $\sigma^{(r)}(t)=(\sigma_{ij}^{(r)}(t))_{ij}$ are increasing, because $T^{r}(t)\ge 0$. Now we can apply Helly's selection Theorem \ref{thm:hellyselsc} to obtain a sequence $(r_k)_k$ with $r_k\rightarrow 1-$ and a $\BV$-function $\sigma_{ij}$ such that $$\sigma_{ij}^{(r_k)}\longrightarrow \sigma_{ij}$$
pointwise on $[0,2\pi]$. By taking appropriate subsequences we can assume without loss of generality that this holds for all $(i,j)$.

The mvf $\sigma=(\sigma_{ij})_{ij}$ is increasing, as it is the pointwise limit of increasing mvfs. The functions $A^{(r)}(z)=A(rz),\,z\in\D$ are holomorphic on $\D$ and continuous on $\overline{\D}$. Hence the Poisson integral formula for holomorphic functions implies
\begin{align*}
A^{(r)}(z)=\frac{i}{2\pi}\int_0^{2\pi}\frac{e^{it}+z}{e^{it}-z}T^{(r)}(t)\,dt
\end{align*}
Let $z\in\D$ be fixed. Applying the above to $f(t)=\frac{e^{it}+z}{e^{it}-z}$ we get
\begin{align*}
A(z) &= \lim_{k\rightarrow\infty} A^{(r_k)}(z) = \lim_{k\rightarrow\infty} i\int_0^{2\pi} \frac{e^{it}+z}{e^{it}-z}\,d\sigma^{(r_k)}(t) = i\int_0^{2\pi} \frac{e^{it}+z}{e^{it}-z}\,d\sigma(t)
\end{align*}
where the last equality follows from Helly's convergence Theorem \ref{thm:hellyconvsc}.
\qed

\section*{Notation}
\addcontentsline{toc}{section}{Notation}

\begin{tabular}{ll}
$\D$	& unit disk around the origin in $\C$\\
$\T$	& unit circle around the origin in $\C$\\
$\Hpl$ & upper half plane in $\C$\\
$M_n$ & space of $n\times n$ matrices with entries in $\C$\\
$I=I_n$   & unit matrix in $M_n$\\
$A^*$ & conjugate transpose of $A\in M_n$\\
$A_{ij}$ & $(i,j)$th entry of the matrix $A$\\
$A\ge 0$ & the Hermitian matrix $A$ is positive semidefinite\\
$A>0$ & the Hermitian matrix $A$ is positive definite\\
$\|A\|$ & operator norm of the matrix $A$\\
$\tr A$ & trace of the matrix $A$\\
$\sigma(A)$ & set of eigenvalues of $A$\\
$\sigmamax(A)$ & spectral radius of $A$, i.e. largest eigenvalue\\
$\H^\infty$ & bounded analytic matrix functions on $\D$ with $\det A\not\equiv0$\\
$\Sch\subset\H^\infty$ & subspace of contractive functions\\
$\prodr=\prod$ & product of matrices ordered from left to right\\
$\prodl$ & product of matrices ordered from right to left\\
$\mint$ & (left-)multiplicative integral\\
$\Re A$ & real part of the matrix $A$, given by $\frac{1}{2}(A+A^*)$\\
$\Im A$ & imaginary part of the matrix $A$, given by $\frac{1}{2i}(A-A^*)$\\
$C(K)$ & space of continuous function on a compact set $K\subset\R^n$\\
$\BV([a,b];M_n)$ & space of matrix-valued bounded variation functions on $[a,b]\subset\R$\\
$\BV([a,b])$ & short for $\BV([a,b];M_1)$\\
$\var_{[a,b]} f$ & variation of a matrix-valued function $f$ on the interval $[a,b]$\\
$\osc_{[a,b]} f$ & oscillation of a function $f$ on the interval $[a,b]$\\
$f\in\mathcal{M}^b_a[E]$ & the multiplicative integral $\mint^b_a \exp(f\,dE)$ exists\\
$h_z(\theta)$ & the Herglotz kernel $h_z(\theta)=\frac{z+e^{i\theta}}{z-e^{i\theta}}$\\
$\ch_M$ & characteristic function of the set $M$\\
$\theta^\dagger$ & generalized inverse of the increasing function $\theta$\\
$\lambda$ & Lebesgue measure on $\R$
\end{tabular}

\newpage
\addcontentsline{toc}{section}{References}
\bibliographystyle{plain}
\bibliography{References}

\newpage
\addcontentsline{toc}{section}{Index}
\printindex
\end{document}